\DeclareSymbolFontAlphabet{\mathbb}{AMSb} 
\DeclareSymbolFontAlphabet{\mathbbl}{bbold}
\newtheorem{thm}{Theorem}[section]
\newtheorem{prop}[thm]{Proposition}
\newtheorem{lem}[thm]{Lemma}
\newtheorem{conv}[thm]{Convention}
\newtheorem{const}[thm]{Construction}
\newtheorem{assumption}[thm]{Assumption}
\newtheorem{cor}[thm]{Corollary}
\theoremstyle{remark}
\newtheorem{rmk}[thm]{Remark}
\newtheorem{dfn}[thm]{Definition}
\numberwithin{equation}{section}
\newcommand{\frakS}{{\mathfrak S}}
\newcommand{\frakU}{{\mathfrak U}}
\newcommand{\frakV}{{\mathfrak V}}
\newcommand{\frakX}{{\mathfrak X}}
\newcommand{\frakY}{{\mathfrak Y}}
\newcommand{\bA}{{\mathbb A}}
\newcommand{\bD}{{\mathbb D}}
\newcommand{\bF}{{\mathbb F}}
\newcommand{\bG}{{\mathbb G}}
\newcommand{\bM}{{\mathbb M}}
\newcommand{\bN}{{\mathbb N}}
\newcommand{\bP}{{\mathbb P}}
\newcommand{\bZ}{{\mathbb Z}}
\newcommand{\calC}{{\mathcal C}}
\newcommand{\calI}{{\mathcal I}}
\newcommand{\calJ}{{\mathcal J}}
\newcommand{\calM}{{\mathcal M}}
\newcommand{\calO}{{\mathcal O}}
\newcommand{\calP}{{\mathcal P}}
\newcommand{\rD}{{\mathrm D}}
\newcommand{\rd}{{\mathrm{d}}}
\newcommand{\rF}{{\mathrm F}}
\newcommand{\rH}{{\mathrm H}}
\newcommand{\rP}{{\mathrm P}}
\newcommand{\rR}{{\mathrm R}}
\newcommand{\rT}{{\mathrm T}}
\newcommand{\rW}{{\mathrm W}}
\newcommand{\Fp}{{\bF_p}}
\newcommand{\colim}{{\mathrm{colim}}}       
\newcommand{\End}{{\mathrm{End}}}           
\newcommand{\Hom}{{\mathrm{Hom}}}           
\newcommand{\HIG}{{\mathrm{HIG}}}           
\newcommand{\id}{{\mathrm{id}}}             
\newcommand{\Ima}{{\mathrm{Im}}}            
\newcommand{\Ker}{{\mathrm{Ker}}}           
\newcommand{\MIC}{{\mathrm{MIC}}}
\newcommand{\nil}{\mathrm{nil}}
\newcommand{\pr}{{\mathrm{pr}}}             
\newcommand{\Sh}{{\mathrm{Sh}}}             
\newcommand{\Spf}{{\mathrm{Spf}}}           
\newcommand{\Spec}{{\mathrm{Spec}}}         
\newcommand{\Strat}{{\mathrm{Strat}}}
\newcommand{\Tot}{{\mathrm{Tot}}}           
\newcommand{\Vect}{{\mathrm{Vect}}}         
\newcommand{\cris}{{\mathrm{cris}}}         
\newcommand{\et}{{\mathrm{\acute{e}t}}}    
\newcommand{\pd}{{\mathrm{pd}}}
\DeclareSymbolFontAlphabet{\mathbb}{AMSb} 
\DeclareSymbolFontAlphabet{\mathbbl}{bbold}
\newcommand{\Prism}{{\mathlarger{\mathbbl{\Delta}}}} 
\newcommand{\ya}{{\rangle}}
\newcommand{\za}{{\langle}}
\begin{document}

\title{Prismatic crystals for smooth schemes in characteristic $p$ with Frobenius lifting mod $p^2$}

\author{Yupeng Wang\footnote{\textbf{Address:} Shanghai Center for Mathematical Sciences, Fudan University, Shanghai, 200438, China. \textbf{Email:} wangyupeng@fudan.edu.cn}}

\date{}
\maketitle

\begin{abstract}
  Let $(A,(p))$ be a crystalline prism with $A_n = A/p^{n+1}A$ for all $n\geq 0$.
  Let $\frakX_0$ be a smooth scheme over $A_0$. Suppose that $\frakX_0$ admits a smooth lifting $\frakX_n$ over $A_n$ and the absolute Frobenius $\rF_{\frakX_0}:\frakX_0\to \frakX_0$ admits a lifting over $A_1$. Then we show that there is an equivalence between the category of the prismatic crystals of truncation $n$ on $(\frakX_0/A)_{\Prism}$ and the category of $p$-connections over $\frakX_n$, which is compatible with cohomologies. This generalises a previous work of Ogus. 
  We also give some remarks on trivializing the Hodge--Tate gerbe $\pi_{\frakX_0}^{\rm HT}:\frakX_0^{\rm HT}\to\frakX_0$ introduced by Bhatt--Lurie. 

  \textbf{Keywords:} prismatic crystals, $p$-connections, Frobenius lifting

  \textbf{MSC2020:} 14F30
\end{abstract}



\setcounter{tocdepth}{1}
\tableofcontents
\section{Introduction}
  Let $\bN_*:=\bN\cup\{\infty\}$. For a $p$-adically complete ring (or sheaf of rings) $S$, we always let $S/p^n:= S$ for $n = \infty$. 
  Fix a crystalline prism $(A,(p))$, let $\phi_A$ be the induced Frobenius map of $A$ and put $A_n = A/p^{n+1}A$ for any $n\in\bN_*$. In this paper, $\frakX_0$ will always denote a smooth scheme over $\Spec(A_0)$ of relative dimension $d$. We will study prismatic crystals on the prismatic site $(\frakX_0/A)_{\Prism}$ introduced by Bhatt--Scholze \cite{BS22}, and the readers are referred to loc.cit. for the foundations of prismatic theory.

  Our main result is the following theorem:
  \begin{thm}[Main theorem]\label{thm:main}
      Let $n\geq 1$ belong to $\bN_*$. Assume that $\frakX_0$ lifts to a smooth formal scheme $\frakX_n$ over $A_n$ and that the absolute Frobenius $\rF_{\frakX_0}:\frakX_0\to\frakX_0$ lifts to a morphism $\rF_{\frakX_1}:\frakX_1\to\frakX_1$ over $\phi_A:A_1\to A_1$. Let $\nu:(\frakX_0/A)_{\Prism}\to \frakX_{0,\et}\simeq \frakX_{n,\et}$ be the natural morphism of sites ( cf. \cite[Const. 4.4]{BS22}). Then we have that
      \begin{enumerate}
          \item[(1)] there exists an equivalence
      \begin{equation}\label{equ:prismatic vs p-connection}
          \Vect((\frakX_0/A)_{\Prism},\calO_{\Prism,n})\simeq \MIC_p^{\rm tqn}(\frakX_n)
      \end{equation}
       between the category of $\calO_{\Prism,n}$-crystals on $(\frakX_0/A)_{\Prism}$ (cf. Definition \ref{dfn:crystal}) and the category of topologically quasi-nilpotent flat $p$-connections on $\frakX_{n,\et}$ (cf. Definition \ref{dfn:p-connection}), which preserves ranks, tensor products and dualities;

       \item[(2)] and that for any $\bM\in \Vect((\frakX_0/A)_{\Prism},\calO_{\Prism,n})$ with the corresponding $(\calM,\nabla_{\calM})\in \MIC_p^{\rm tqn}(\frakX_n)$ via the equivalence in item (1), there exists a quasi-isomorphism of complexes of sheaves of $A_n$-modules on $\frakX_{n,\et}$
       \[\rR\nu_*\bM\simeq \rD\rR(\calM,\nabla_{\calM}),\]
       functorial in $\bM$. In particular, we have a quasi-isomorphism of complexes of $A_n$-modules
       \[\rR\Gamma((\frakX_0/A)_{\Prism},\bM)\simeq \rR\Gamma(\frakX_{n,\et},\rD\rR(\calM,\nabla_{\calM})).\]
      \end{enumerate}
  \end{thm}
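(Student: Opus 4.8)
The plan is to reduce the global statement to a local computation on an affine chart, where the lifted Frobenius $\rF_{\frakX_1}$ supplies an explicit prism structure, and then glue. Since $\frakX_0$ is smooth over $A_0$ of dimension $d$, it is Zariski-locally étale over $\mathbb{A}^d_{A_0}$; choosing such a chart with coordinates $t_1,\dots,t_d$ and using the lifts $\frakX_n$ over $A_n$, I first set up the Čech-type description of the prismatic site. The key object is the free $\delta$-$A$-algebra (or its $p$-completion) on generators mapping to lifts of the $t_i$, together with the structure map to the given chart; this is an explicit prism in $(\frakX_0/A)_{\Prism}$, and its self-products compute everything. The Frobenius lift mod $p^2$ enters precisely here: it gives a preferred map from this free prism to the chart $\frakX_1$, hence a way to linearize the "divided-power-free" situation that occurs for crystalline prisms $(A,(p))$.

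**The main construction.** Given $\bM \in \Vect((\frakX_0/A)_{\Prism}, \calO_{\Prism,n})$, I evaluate it on the explicit prism above to get a finite projective module $\calM$ over $\calO_{\frakX_n}$ (using the étale comparison $\frakX_{0,\et} \simeq \frakX_{n,\et}$), and the crystal structure — the descent datum along the two projections from the self-product — produces a stratification. The heart of the matter is to show that, because we are working modulo $p^{n+1}$ with $(A,(p))$ a crystalline prism, this stratification is equivalent to the datum of a $p$-connection $\nabla_\calM : \calM \to \calM \otimes \Omega^1_{\frakX_n/A_n}$ satisfying $\nabla_\calM^2 = 0$ and the topological quasi-nilpotence condition of Definition~\ref{dfn:p-connection}. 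Concretely, the self-product of the free $\delta$-prism is a $p$-completed divided-power-type envelope whose "coordinate" is $p \cdot (\text{difference of the two Frobenius-twisted generators})$; expanding the stratification as a power series in these coordinates and reading off the linear term yields $\nabla_\calM$, with the $p$ appearing because the ideal is generated by $p$-multiples. The Frobenius lift is what makes this difference well-defined as an honest element (not merely modulo a divided power ideal) after dividing by $p$. Going backwards, a topologically quasi-nilpotent flat $p$-connection exponentiates (formally, using quasi-nilpotence to ensure convergence mod $p^{n+1}$) to a stratification, hence to a crystal; one checks the two constructions are mutually inverse and that rank, $\otimes$, and duals are visibly preserved.

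**For part (2),** the point is that $\rR\nu_*\bM$ is computed by the Čech–Alexander complex associated to the explicit prism and its self-products, i.e. by the cosimplicial module $[k] \mapsto \bM$ evaluated on the $(k{+}1)$-fold self-product. Under the dictionary of part (1), this cosimplicial object is the Čech nerve of the de Rham–type complex: the normalized complex of the self-products of the free $\delta$-prism, after the coordinate change sending the difference variable to $p \cdot (\text{de Rham differential})$, becomes exactly the $p$-de Rham complex $\rD\rR(\calM,\nabla_\calM) = [\calM \xrightarrow{\nabla_\calM} \calM \otimes \Omega^1_{\frakX_n} \xrightarrow{\nabla_\calM} \cdots]$. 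This is the prismatic analogue of the classical comparison between the crystalline (Čech–Alexander) complex and the de Rham complex; the $p$-twist in the differentials is the shadow of the ideal $(p)$. Functoriality in $\bM$ is clear from naturality of all constructions, and the statement on $\rR\Gamma$ follows by applying $\rR\Gamma(\frakX_{0,\et},-)$ and the Leray spectral sequence for $\nu$.

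**The main obstacle** I anticipate is the precise identification of the self-product prism and the bookkeeping of where the factors of $p$ go: one must show that the relevant $p$-completed prismatic envelope of the diagonal, for the free $\delta$-prism over a crystalline prism, is a polynomial-type algebra on variables that are literally $p$ times a topological generator, and that this holds compatibly in the cosimplicial direction. Getting the normalization/boundedness right so that the comparison is a genuine quasi-isomorphism of complexes (not just on cohomology) — and checking that topological quasi-nilpotence on the $p$-connection side matches the condition that makes the Čech–Alexander complex well-behaved — is the technical core. Once the affine case is nailed down, globalization is formal: all constructions are functorial in the chart, so they descend along the Zariski (or étale) topology of $\frakX_{0,\et} \simeq \frakX_{n,\et}$, and the cohomology comparison glues via a hypercover argument.
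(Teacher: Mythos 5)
Your local analysis is essentially the right one and matches the paper's \S\ref{sec:local construction}: on an affine chart one picks a lifting $R$ of $R_0$ with a $\delta$-structure, the \v Cech nerve of $(R,(p))$ is a $p$-completed free pd-algebra on the variables $\underline X_i=\frac{\underline T_0-\underline T_i}{p}$, stratifications are exponentials of commuting topologically quasi-nilpotent operators, and the \v Cech--Alexander complex is compared to the $p$-de Rham complex via a bicomplex. But there are two related problems with your write-up, and the second is a genuine gap.

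First, you misplace where the hypothesis on $\rF_{\frakX_1}$ is used. The local equivalence and the local cohomological comparison need \emph{no} Frobenius-lifting hypothesis at all: an affine smooth $R_0$ always lifts to $A$ and always carries some $\delta$-structure, and the ``division by $p$'' is unproblematic because $R\widehat\otimes_A R$ is $p$-torsion free. The lifting of Frobenius mod $p^2$ plays no role in making the difference variables well-defined.

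Second, and decisively, your claim that ``once the affine case is nailed down, globalization is formal'' is false, and this is exactly where the Frobenius hypothesis enters. The local equivalence depends on the chosen $\delta$-structure on the local lifting $R$, i.e.\ on a Frobenius lift to \emph{all} of $A$, and such choices cannot be made compatibly on overlaps in general --- if they could, the theorem would need no hypothesis beyond the existence of $\frakX_n$. What the paper must (and does) prove is: (i) a computation of the coproduct $(R_1,(p))\times(R_2,(p))$ of the \emph{same} ring $R$ equipped with two $\delta$-structures $\delta_1,\delta_2$ agreeing mod $p$ (equivalently, Frobenii agreeing mod $p^2$), showing it is again a free pd-algebra $R_j[\underline Y]^{\wedge}_{\pd}$ (Proposition \ref{prop:key}, following Ogus) --- this is where Assumption \ref{assumpsion:key}, hence the existence of $\rF_{\frakX_1}$, is essential; and (ii) that the resulting transition isomorphisms $\iota_{12}=\exp(-\sum_i\phi_{i,2}\frac{T_i-S_i}{p})$ identify the two local $p$-connections and satisfy the cocycle condition $\iota_{13}=\iota_{23}\circ\iota_{12}$ (Proposition \ref{prop:compare local equivalence}). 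Only then does descent along $\frakX_{0,\et}$ apply. The same issue recurs for part (2): one must show the local quasi-isomorphism is independent of the $\delta$-structure (depending only on $\rF_{\frakX_1}$), which the paper does via the auxiliary prisms $R_\Sigma$ indexed by finite sets of $\delta$-structures. Your proposal contains no substitute for either step.
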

  
  Note that when $ n = 0$, the theorem establishes an equivalence
  \begin{equation}\label{equ:HT vs Higgs}
          \Vect((\frakX_0/A)_{\Prism},\overline \calO_{\Prism})\simeq\HIG^{\nil}(\frakX_0).
  \end{equation}
  between the category $\Vect((\frakX_0/A)_{\Prism},\overline 
  \calO_{\Prism})$ of Hodge--Tate crystals and the category $\HIG^{\nil}(\frakX_0)$ of nilpotent Higgs bundles on $\frakX_{0,\et}$ which is compatible with cohomologies.

  The item (1) of Theorem \ref{thm:main} will be proved in \S\ref{ssec:proof-I} while the item (2) will be proved in \S\ref{ssec:proof-II}. Here, we exhibit our strategy: First, we prove the result when $\frakX_0$ is affine. In this case, $\frakX_0$ admits a ``prismatic lifting''. In other words, the absolute Frobenius $\rF_{\frakX_0}$ of $\frakX_0$ admits a lifting over $A$. So we can construct the desired equivalence and quasi-isomorphism as in \cite{Tia23}. More precisely, in this situation, we are able to give an explicit description of the cosimplicial prism associated to the \v Cech nerve of the given ``prismatic lifting'', which consists of several self-products of this ``prismatic lifting'', and then obtain the desired equivalence (cf. Proposition \ref{prop:local equivalence}) and quasi-isomorphism (cf. Proposition \ref{prop:cohomology comparison}) by studying the stratification with respect to this cosimplicial prism. Next, we show that this local construction depends only on the lifting of $\rF_{\frakX_0}$ over $A_1$. See Proposition \ref{prop:compare local equivalence} and Proposition \ref{prop:local compatibility of cohomology} for precise statements for the local equivalence and quasi-isomorphism, respectively. Finally, for general $\frakX_0$, by choosing an affine covering, we can reduce Theorem \ref{thm:main} to the affine case by using the given Frobenius lifting $\rF_{\frakX_1}$ over $A_1$ (cf. \S\ref{ssec:proof-I} and \S\ref{ssec:proof-II} for details). The key ingredient is an observation of Ogus \cite[Thm. 2.3.13(1)]{Ogu22} (cf. Proposition \ref{prop:key} for the precise statement), which allows us to give an explicit description of the prismatic envelope of ``local prismatic liftings'' of $\frakX_0$ with different $\delta$-structures. This helps us to prove that the local construction only depends on the Frobenius lifting over $A_1$ (cf. \S\ref{ssec:compare local equivalence} and \S\ref{ssec:compatibility of cohomology}).

  \begin{rmk}\label{rmk:ogus}
      \begin{enumerate}
          \item[(1)] When $\frakX_0$ is affine (and thus always admits a lifting over $A$, and so does $\rF_{\frakX_0}$), Theorem \ref{thm:main} for $n = 0$ was obtained by Tian \cite[Thm. 5.12]{Tia23}.

          \item[(2)] When $\rF_{\frakX_0}:\frakX_0\to\frakX_0$ admits a lifting $\rF_{\frakX}:\frakX\to\frakX$ over $\phi_A:A\to A$, the result was obtained by Ogus as \cite[Cor. 6.2.4 and Lem. 6.3.7]{Ogu22}.

          \item[(3)] In loc.cit., Ogus can deal with a more general case where $\frakX$ is a closed subscheme of a smooth scheme $\frakY_0$, and the absolute Frobenius $\rF_{\frakY_0}:\frakY_0\to\frakY_0$ admits a lifting $\rF_{\frakY}:\frakY\to\frakY$ over $\phi_A:A\to A$. Roughly, in this case, one can construct a prismatic envelope $\Prism_{\frakX}(\frakY)$ which is affine over $\frakY$ but not of finite type. Then the category $\Vect((\frakX_0/A)_{\Prism},\calO_{\Prism})$ is equivalent to the category $\MIC\rP(\frakX/\frakY/\Spf(A))$ of certain $p$-connections on $\Prism_{\frakX}(\frakY)$ \cite[Thm. 6.9]{Ogu22}. It is worth mentioning that the results of Ogus were generalized by Jiahong Yu \cite{Yu} to the case where $\rF_{\frakY_0}$ admits a lifting $\rF_{\frakY_1}:\frakY_1\to\frakY_1$ over $A_1$ merely.

          \item[(4)] When $n=1$, Theorem \ref{thm:main} may be well-known to experts by using non-abelian Hodge theory in characteristic $p$. But our proof is more direct by using prismatic theory merely and is compatible with the constructions in the literature. See Remark \ref{rmk:non-abelian Hodge} for more discussion.
      \end{enumerate}
  \end{rmk}
  \begin{rmk}\label{rmk:Bhatt-Lurie}
      Let $\pi^{\rm HT}_{\frakX_0}:\frakX_0^{\rm HT}\to\frakX_0$ be the structure morphism of the Hodge--Tate stack $\frakX^{\rm HT}_{0}$ of $\frakX$ \cite[Const. 3.7]{BL22b}, which is actually a gerbe banded by the flat group scheme $\rT_{\frakX_0/A_0}^{\sharp}$ over $\frakX_0$ \cite[Prop. 5.12]{BL22b}. Any trivialization of the gerbe $\pi_{\frakX}^{\rm HT}$ will induce an equivalence of categories (\ref{equ:HT vs Higgs}).
      This is the case when $\rF_{\frakX_0}$ admits a lifting $\rF_{\frakX}:\frakX\to\frakX$ over $\phi_A:A\to A$  \cite[Rem. 5.13(1)]{BL22b} (and as a consequence, we have the equivalence (\ref{equ:HT vs Higgs}) even in the derived sense). Let $\alpha_{\rm HT}(\frakX_0)\in \rH^2(\frakX_0,\rT_{\frakX_0/A_0}^{\sharp})$ be the obstruction class for trivializing $\pi^{\rm HT}_{\frakX_0}$. It induces a class $\alpha_{\rF}(\frakX_0)\in\rH^1(\frakX_0,\rF_{\frakX_0*}\calO_{\frakX_0}/\calO_{\frakX_0})$, which is conjectured to be the obstruction class for lifting $\rF_{\frakX_0}$ over $\phi_A:A_1\to A_1$ \cite[Conj. 5.14(a)]{BL22b}. This conjecture was recently proved by Jiahong Yu \cite{Yu2}. In particular, that the gerbe $\pi_{\frakX}^{\rm HT}$ is trivial implies that $\rF_{\frakX_0}$ \emph{must} admit a lifting $\rF_{\frakX_1}:\frakX_1\to\frakX_1$ over $\phi_A:A_1\to A_1$.
      So Theorem \ref{thm:main} is \emph{optimal} in this sense.
  \end{rmk}
    
      As we always have the equivalence (\ref{equ:HT vs Higgs}) when $\alpha_{\rF}(\frakX_0) = 0$ (or equivalently $\rF_{\frakX_0}$ admits a lifting $\rF_{\frakX_1}$ over $A_1$), one may ask whether we always have $\alpha_{\rm HT}(\frakX_0) = 0$ as long as $\alpha_{\rF}(\frakX_0) = 0$?
      So one may ask whether one can use the same approach as in the proof of \cite[Prop. 5.12]{BL22b} to show that the gerbe $\pi_{\frakX_0}^{\rm HT}$ admits a section once $\rF_{\frakX_0}$ admits a lifting $\rF_{\frakX_n}$ over for $A_n$ some $1\leq n<+\infty$? We will provide some calculations towards this question in \S\ref{sec:remarks on gerbes}.

    We also want to give a remark on the relationship between Theorem \ref{thm:main} and the non-abelian Hodge theory in characteristic $p$ (cf. \cite{OV,GLSQ10,Shiho,Xu19,LSZ19}, etc.).

\begin{rmk}\label{rmk:non-abelian Hodge}
    For any $*\in\{0,1\}$, let $\frakX_*^{\prime}:=\frakX_*\times_{\Spf A,\phi_A}\Spf(A)$. 
    It is well-known that there is an equivalence
    \[\Vect((\frakX_0/A)_{\cris},\overline \calO_{\frakX_0/A})\simeq \MIC^{\rm tqn}(\frakX_0)\]
    between the category $\Vect((\frakX_0/A)_{\cris},\overline \calO_{\frakX_0/A})$ of $\overline \calO_{\frakX_0/A}$-crystals (cf. Definition \ref{dfn:crystal}) on the crystalline site $(\frakX_0/A)_{\cris}$ and the category $\MIC^{\rm tqn}(\frakX_0)$ of (topologically) quasi-nilpotent connections on $\frakX_0$,
    which is compatible with cohomologies \cite[Thm. 6.6 and Thm. 7.1]{Ber-Ogus} (at least when $\frakX_0$ admits a lifting $\frakX$ over $A$). Here, $\overline \calO_{\frakX_0/A}:=\calO_{\frakX_0/A}/p$ is the quotient of the structure sheaf $\calO_{\frakX_0/A}$ on $(\frakX_0/A)_{\cris}$ by $p$.
    As we always have the equivalence of categories 
    \[\Vect((\frakX_0/A)_{\cris},\overline \calO_{\frakX_0/A})\simeq \Vect((\frakX_0^{\prime}/A)_{\Prism},\overline \calO_{\Prism})\]
    by \cite[Exam. 4.7]{BS23}, Theorem \ref{thm:main} provides an equivalence of categories
    \begin{equation}\label{equ:connection = p-connection}
       \psi_{\rm new}: \HIG^{\nil}(\frakX_0^{\prime})\simeq\MIC^{\rm tqn}(\frakX_0)
    \end{equation}
    which is compatible with cohomologies and is defined as the composite of the following equivalences
    \begin{equation}\label{equ:connection = p-connection-II}
        \HIG^{\nil}(\frakX_0^{\prime})\xrightarrow{\simeq}\Vect((\frakX_0^{\prime}/A)_{\Prism},\overline \calO_{\Prism})\xrightarrow{\simeq}\Vect((\frakX_0/A)_{\cris},\overline \calO_{\frakX_0/A})\xrightarrow{\simeq}\MIC^{\rm tqn}(\frakX_0).
    \end{equation}
    We emphasize that the equivalence (\ref{equ:connection = p-connection}) coincides with the equivalence 
    \[\psi_{\rm old}:\HIG^{\nil}(\frakX_0^{\prime})\xrightarrow{\simeq}\MIC^{\rm tqn}(\frakX_0)\]
    that already appeared in the literature (cf. \cite[Thm. 2.11(2) and Cor. 2.28]{OV}, \cite[Cor. 3.2]{Shiho}, \cite[Thm. 5.8]{GLSQ10}, etc.).
    To see this, noting that the following presheaves
    \[\frakY_0\in\frakX_{0,\et}\mapsto \HIG^{\nil}(\frakY_0^{\prime}) \text{ and }\frakY_0\in\frakX_{0,\et}\mapsto \MIC^{\rm tqn}(\frakY_0)\]
    are both \'etale sheaves, we may assume that $\frakX_0 = \Spec(R_0)$ is small affine in the sense that there exists an \'etale morphism 
    \[\frakX_0\to\Spec(A_0[T_1^{\pm 1},\dots,T_d^{\pm 1}]).\]
    In this situation, one can give a very explicit description of each equivalence appearing in \eqref{equ:connection = p-connection-II} by its construction, and thus get an explicit description of $\psi_{\rm new}$. Recall that $\psi_{\rm old}$ also admits an explicit description in this setting (cf. \cite[Rem. 1.12]{Shiho}). Then we can directly check $\psi_{\rm new} = \psi_{\rm old}$ by using their explicit descriptions.
    We also point out that the equivalence (\ref{equ:connection = p-connection}) admits a generalization (with more restrictions) where we do \emph{not} need a lifting of $\rF_{\frakX_0}$ any more (cf. \cite{OV,Xu19,LSZ19}, etc.). Thus, it is still worth considering the analogue of (\ref{equ:connection = p-connection}) for certain prismatic crystals on $(\frakX^{\prime}_0/A)_{\Prism}$ without any additional assumption on lifting $\rF_{\frakX_0}$.
\end{rmk}

    Finally, it is worth pointing out that the prismatic theory in the crystalline case also applies to the algebraic geometry in characteristic $p$. For example, using prismatic theory, Petrov \cite{Pet23} studied the (non-)decomposibility of de Rham complexes for schemes in characteristic $p$, which generalized the previous work of Deligne--Illusie \cite{DI}. In particular, in loc.cit., Petrov showed that when $A_0 = k$ is a perfect field, the de Rham complex of a smooth variety $\frakX_0$ over $k$ is formal if and only if $(\frakX_0,\rF_{\frakX_0})$ admits a lifting over $A_1 = \rW_1(k)$ (See also \cite[Conj. 5.14(c)]{BL22b} for a relevant conjecture). 
\subsection*{Organizations}
  The paper is organized in the following way: In \S\ref{sec:notations}, we introduce some basic definitions. In \S\ref{sec:key proposition},we calculate the prismatic envelope of different ``local prismatic liftings'' corresponding to different $\delta$-structures (cf. Proposition \ref{prop:key}), which is the key ingredient to prove Theorem \ref{thm:main} when $(\frakX_0,\rF_{\frakX_0})$ lifts to $A_1$ merely.
  We first show the main theorem when $\frakX_0$ is affine in \S\ref{sec:local construction}, and then prove Theorem \ref{thm:main} in \S\ref{sec:proof of main theorem} by using Proposition \ref{prop:key} to compare and glue the local constructions we obtained in the affine case. At the end of this paper, in \S\ref{sec:remarks on gerbes}, we give some remarks on trivializing the Hodge--Tate gerbe $\pi_{\frakX_0}^{\rm HT}$.
\subsection*{Acknowledgement}
  The author would like to thank Jingbang Guo, Fei Liu, Mao Sheng, Yichao Tian, Jinbang Yang and Jiahong Yu for valuable conversations. The author also wants to thank Heng Du and Yu Min for valuable comments and suggestions and thank Sasha Petrov for pointing out a mistake in the early version of this paper. Special thank goes to Jiahong Yu for careful reading and pointing out an error in an early version of the draft. Parts of the work were done during the author’s visit to Southern university of Science and Technology, and he would like to thank Hui Gao for invitation and the institute for comfortable research environment. The author completed this work when he was a postdoc at Beijing International Center for Mathematical Reaserch in Peking University. He would like to thank the institute and his mentor there, Professor Ruochuan Liu, for providing the position to him. Finally, the authorI would like to thank the anonymous referee for his or her extremely careful reading and very helpful suggestions. Without his or her help, the paper cannot be exhibited in this way. The author is partially supported by CAS Project for Young Scientists in Basic Research, Grant No. YSBR-032, and by New Cornerstone Science Foundation through Ruochuan Liu.

\section{Convention and Notation}\label{sec:notations}
  We introduce some notations and definitions in this section. Throughout this paper, we always fix a crystalline prism $(A,(p))$ and let $A_n:=A/p^{n+1}$ for any $n\in\bN_*$. For an $\Fp$-algebra $R_0$, let $\phi_{R_0}:R_0\to R_0$ be the absolute Frobenius endomorphism on $R_0$. 

  In this paper, $\frakX_0$ always denotes a smooth and separated scheme over $A_0$, and thus the fiber product of finitely many affine schemes over $\frakX_0$ is always affine.
  
  Let $\rW(-)$ denote the functor for the ring of the $p$-typical Witt vectors and $\rW_n(-)$ be the truncation of $\rW(-)$ of length $(n+1)$; for example, $\rW_0 = \bG_a$.

  For any $\delta$-ring $A$, let $\delta^n$ be the $n$-fold self-composite of $\delta$; that is, we have $\delta^0 = \id$ and $\delta^{n+1} = \delta\circ\delta^n$.

  Fix an integer $d\geq 0$ (which usually represents the dimension of a scheme in this paper). For any capital letter, for example $X$, we denote by $\underline X$ the sequence $X_1,\dots,X_d$. For two capital letters $X$ and $Y$, denote by $\underline X-\underline Y$ the sequence $X_1-Y_1,\dots,X_d-Y_d$. Similarly, $\frac{\underline X-\underline Y}{p}$ stands for $\frac{X_1-Y_1}{p},\dots,\frac{X_d-Y_d}{p}$, etc.. For any $i\geq 1$, $\underline X_{i}$ always stands for $X_{i,1},\dots,X_{i,d}$.

  Let $Y$ be an element in a ring $R$ which admits arbitrary pd-powers. Let $Y^{[n]}$ be the $n$-th pd-power of $Y$; for example, when $R$ is $p$-torsion free, $Y^{[n]} = \frac{Y^n}{n!}$ in $R[1/p]$. If $Y_1,\dots,Y_d\in R$ admit arbitrary pd-powers, for any $\underline n = (n_1,\dots,n_d)\in\bN^d$, we write $\underline Y^{[\underline n]}:= \prod_{i=1}^dY_i^{[n_i]}$. 

  Let $\underline 1_i$ be the generator of $i$-th component of the (additive) monoid $\bN^d$ and let $\underline 0= (0,0,\dots,0)$. For any $\underline n = (n_1,\dots,n_d)$, set $|\underline n| = n_1+\cdots+n_d$.

  For any set $S$ and elements $s_1,s_2\in S$, let $\delta_{s_1,s_2}$ be the Kronecker's $\delta$-function; that is, $\delta_{s_1,s_2} = 0$ when $s_1\neq s_2$ and $\delta_{s_1,s_2} = 1$ when $s_1 = s_2$.

  The main objects we want to study are prismatic crystals. Let us give the definition.
  \begin{dfn}\label{dfn:crystal}
      Let $\calC$ be a site and $\bA$ be a sheaf of rings on $\calC$. For any integer $r\geq 0$, by an \emph{$\bA$-crystal in vector bundles of rank $r$} on $\calC$, we mean a sheaf $\bM$ of $\bA$-modules satisfying the following conditions: 
      \begin{enumerate}
          \item[(1)] For any object $C\in\calC$, the evaluation of $\bM$ at $C$, denoted by $\bM(C)$, is a finite projective $\bA(C)$-module of rank $r$.

          \item[(2)] For any arrow $C_1\to C_2$ in $\calC$, the base change homomorphism 
          \[\bM(C_2)\otimes_{\bA(C_2)}\bA(C_1)\xrightarrow{\cong}\bM(C_1)\]
          is an isomorphism of $\bA(C_1)$-modules.
      \end{enumerate}
      The morphisms of $\bA$-crystals are morphisms of sheaves of $\bA$-modules.
      We denote by $\Vect(\calC,\bA)$ the category of $\bA$-crystals in vector bundles on $\calC$. When context is clear, we just call an $\bA$-crystal in vector bundles an \emph{$\bA$-crystal} for short. 

      In this paper, we often assume the site $\calC$ is the prismatic site $(\frakX_0/A)_{\Prism}$ (cf. \cite[Def. 4.1]{BS22}) while the sheaf $\bA$ is $\calO_{\Prism,n} = \calO_{\Prism}/p^{n+1}$ for some $n\in\bN_*$. Here, $\calO_{\Prism,n} = \calO_{\Prism}$ for $n = \infty$. In this case, we call $\calO_{\Prism,n}$-crystals the \emph{prismatic crystals of truncation $n$}. In particular, when $n= 0$, we have $\overline \calO_{\Prism} = \calO_{\Prism,0}$ and call $\overline \calO_{\Prism}$-crystals the \emph{Hodge--Tate crystals}.
  \end{dfn}
  To study prismatic crystals, we need to use the language of stratifications.
  \begin{dfn}[Stratification]\label{dfn:stratification}
      Let $S^{\bullet}$ be a cosimplicial ring with face maps $p_i^n$'s and degeneracy maps $\sigma_i^n$'s; that is, $p_i^n:S^n\to S^{n+1}$ is induced by the injection $[n]\to [n+1]\setminus\{i\}$ and that $\sigma_i^n:S^{n+1}\to S^n$ is induced by the surjection $[n+1]\to[n]$ satisfying $(\sigma^{n}_i)^{-1}(i) = \{i,i+1\}$. Let $q^n_i:S^0\to S^n$ be the morphism induced by the inclusion $[0]\xrightarrow{0\mapsto i}[n]$ of simplices.
     We often write $p_i$ (resp. $\sigma_i$, resp. $q_i$) instead of $p_i^n$ (resp. $\sigma_i^n$, resp. $q_i^n$) for short when its source and target are clear. 
     
     By a \emph{stratification} with respect to $S^{\bullet}$, we mean a finite projective $S^0$-module together with an $S^1$-linear isomorphism \[\varepsilon:M\otimes_{S^0,p_0}S^1\xrightarrow{\cong}M\otimes_{S^0,p_1}S^1\]
     satisfying the following conditions:
     \begin{enumerate}
         \item[(1)] $p_2^*(\varepsilon)\circ p_0^*(\varepsilon) = p_1^*(\varepsilon):M\otimes_{S^0,q_2}S^2\to M\otimes_{S^0,q_0}S^2$.
         
         \item[(2)] $\sigma_0^*(\varepsilon) = \id_M:M\to M$.
     \end{enumerate}
     For two stratifications $(M,\epsilon_M)$ and $(N,\epsilon_N)$, we call an $S^0$-linear morphism $f:M\to N$ a \emph{morphism of stratifications} if it fits into the following commutative diagram of morphisms of $S^1$-modules: 
     \[\xymatrix@C=0.5cm{
       M\otimes_{S^0,p_0}S^1\ar[d]^{f\otimes\id}\ar[rr]^{\epsilon_M}&&M\otimes_{S^0,p_1}S^1\ar[d]^{f\otimes\id}\\
       N\otimes_{S^0,p_0}S^1\ar[rr]^{\epsilon_N}&&N\otimes_{S^0,p_1}S^1.
     }\]
     We denote by $\Strat(S^{\bullet})$ the category of stratifications with respect to $S^{\bullet}$.
  \end{dfn}

  We remark that in Definition \ref{dfn:stratification}, condition (1) is usually called the \emph{cocycle condition}, and condition (2) can be deduced from condition (1) for some special cosimplicial ring $S^{\bullet}$. For example, all cosimplicial rings we shall meet in this paper are in this special case; see \cite[Rem. 4.6]{Tia23}.

\section{The key proposition}\label{sec:key proposition}
  Let $(A,(p))$ be a crystalline prism with $A_0 = A/p$.
  Throughout this section, let $\frakX_0 = \Spec(R_0)$ be an affine geometrically connected smooth scheme of dimension $d$ over $A_0$. We always assume that there exists a \emph{chart} on $\frakX_0$; that is, there is an \'etale morphism
  \[\Box: A_0[\underline T^{\pm 1}]:=A_0[T_1^{\pm 1},\dots,T_d^{\pm 1}]\to R_0,\]
  and fix such a chart $\Box$.
  By the smoothness of $\frakX_0$, it always admits a smooth lifting $\frakX = \Spf(R)$ over $A$, which is \emph{unique} up to (not necessarily unique) isomorphisms. In particular, $R$ is $p$-adically complete, and $R_0 = R/pR$. Let $A\za\underline T^{\pm 1}\ya$ be the $p$-adic completion of 
  \[A[\underline T^{\pm 1}] = A[T_1^{\pm 1},\dots,T_d^{\pm 1}],\]
  and thus it is a ($p$-completely) smooth lifting of $A_0[\underline T^{\pm 1}]$ over $A$. By the \'etaleness of $\Box$, there is a morphism of $A$-algebras $A\za \underline T^{\pm 1}\ya\to R$ lifting $\Box$, which is \emph{unique} up to a \emph{unique} isomorphism and makes $R$ a $p$-completely \'etale $A\za T^{\pm 1}\ya$-algebra. By abuse of notation, we still denote this lifting by $\Box:A\za\underline T^{\pm 1}\ya\to R$, and call $T_i$'s the \emph{coordinates} on $R$ induced by $\Box$.
  Let $R\widehat \otimes_{A}R$ dneote the $p$-adic completion of $R\otimes_{A}R$, and let $\underline T$ and $\underline S$ denote the respective coordinates on the first and second components, induced by the given chart $\Box$ on $R$. Let $J = \Ker(\Delta)$ be the kernel of the diagonal map $\Delta:R\widehat \otimes_AR\to R$.

  \begin{rmk}\label{rmk: On Ker(Delta)}
      We claim $J$ is generated by $T_1-S_1,\dots,T_d-S_d$ together with an element $e\in R\widehat \otimes_AR$ satisfying that $e^2-e$ belongs to the ideal of $R\widehat \otimes_AR$ generated by $T_1-S_1,\dots,T_d-S_d$. Indeed, note that the diagonal map $\Delta:R\widehat \otimes_AR\to R$ factors as 
      \[R\widehat \otimes_AR\to R\widehat \otimes_{A\za\underline T^{\pm 1}\ya}R\to R.\]
      As $R$ is $p$-completely \'etale over $A\za\underline T^{\pm 1}\ya$, the second map $R\widehat \otimes_{A\za\underline T^{\pm 1}\ya}R\to R$ above admits a section $s: R\to R\widehat \otimes_{A\za\underline T^{\pm 1}\ya}R$. Let $e$ be a pre-image of $1-s(1)\in R\widehat \otimes_{A\za\underline T^{\pm 1}\ya}R$ via the surjection $R\widehat \otimes_AR\to R\widehat \otimes_{A\za\underline T^{\pm 1}\ya}R$. As $1-s(1)$ is an idemponent of $R\widehat \otimes_{A\za\underline T^{\pm 1}\ya}R$, we see that 
      \[e^2-e\in\Ker(R\widehat \otimes_{A}R\to R\widehat \otimes_{A\za\underline T^{\pm 1}\ya}R),\]
      which is exactly the ideal generated by $T_1-S_1,\dots,T_d-S_d$. By abuse of notation, we will not distinguish $e$ with its image $1-s(1)$ in $R\widehat \otimes_{A\za\underline T^{\pm 1}\ya}R$.
  \end{rmk}

  One can equip $R$ with a $\delta$-structure $\delta:R\to R$ compatible with that on $A$. For example, we may equip $A\za\underline T^{\pm 1}\ya$ with the $\delta$-structure which is compatible with that on $A$ such that $\delta(T_1) = \dots = \delta(T_d) = 0$, and then extend this $\delta$-structure to $R$ by applying \cite[Lem. 2.18]{BS22}. Now, assume $R$ is equipped with a $\delta$-structure compatible with that on $A$ and let $\varphi$ be the induced Frobenius map on $R$. Then the diagonal morphism $\Delta:R\widehat \otimes_{A}R\to R$ is a morphism of $\delta$-rings. In particular, $\Delta$ is $\varphi$-equivariant. As $R$ is $p$-completely smooth over $A$, it is $p$-torsion free and thus the given $\delta$-structure on $R$ makes $(R,(p))$ a crystalline prism (cf. \cite[Exam. 3.3]{BS22}), which is indeed an object in $(R_0/A)_{\Prism}$ as $R/pR = R_0$. One of our purposes in this section is to study the self-coproducts of $(R,(p))$'s in $(R_0/A)_{\Prism}$. The next lemma tells us that these coproducts exist.
  \begin{lem}\label{lem:cover}
      Equip $R$ with a $\delta$-structure making $(R,(p))$ a prism in $(R_0/A)_{\Prism}$. Then $(R,(p))$ is a covering of the final object of the topos $\Sh((R_0/A)_{\Prism})$. Moreover, for any $(B,(p))\in (R_0/A)_{\Prism}$, let $I:=\Ker(B\widehat \otimes_AR\to B/p)$ and then the coproduct of $(B,(p))$ and $ (R,(p))$ exists in $(R_0/A)_{\Prism}$ and is given by the prismatic envelope $\big(B\widehat \otimes_AR\big)\{\frac{I}{p}\}^{\wedge}_p$ in the sense of \cite[Prop. 3.13]{BS22}.
  \end{lem}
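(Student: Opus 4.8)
The plan is to produce the coproduct as a prismatic envelope and then run three routine checks: that it is an object of $(R_0/A)_{\Prism}$, that it has the universal property of the coproduct, and that the structure map to it from $(B,(p))$ is a covering. Recall first that $B\widehat\otimes_A R$, with the $\delta$-structure induced from the fixed compatible $\delta$-structures on $B$ and $R$, is the coproduct of $B$ and $R$ in the category of $p$-complete $\delta$-$A$-algebras. There are two obstructions to $(B\widehat\otimes_A R,(p))$ being an object of the site: it need not be a prism over $(A,(p))$, and the two maps $R_0\rightrightarrows(B\widehat\otimes_A R)/p$ induced by $(B,(p))$ and by $(R,(p))$ need not agree. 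Both are governed by the ideal $I=\Ker(B\widehat\otimes_A R\to B/p)$, which contains $p$ since that map factors through $(B\widehat\otimes_A R)/p=B/p\otimes_{A_0}R_0$ and whose quotient is $B/p$; the remedy is to form the prismatic envelope along $I$.

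So I would set $C:=(B\widehat\otimes_A R)\{\tfrac{I}{p}\}^{\wedge}_p$ and invoke \cite[Prop. 3.13]{BS22}, for which one needs a workable description of $I$. Since $R$ is $p$-completely \'etale over $A\za\underline T^{\pm1}\ya$, the ring $B\widehat\otimes_A R$ is $p$-completely \'etale over $B\za\underline T^{\pm1}\ya$, and running the argument of Remark \ref{rmk: On Ker(Delta)} with $B\widehat\otimes_A R$ in place of $R\widehat\otimes_A R$ shows that, after $p$-completion, $I$ is generated by $p$, by the coordinate differences $x_i:=\theta_B(T_i)\otimes1-1\otimes T_i$ (where $\theta_B\colon R_0\to B/p$ is the structure map, $\theta_B(T_i)$ is lifted arbitrarily to $B$, and $T_i$ is the $R$-side coordinate), and by one idempotent-type element $e$ with $e^2-e\in(x_1,\dots,x_d)$. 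The sequence $x_1,\dots,x_d$ is $p$-completely regular (it is part of a coordinate system on the $p$-completely \'etale $B\za\underline T^{\pm1}\ya$-algebra $B\widehat\otimes_A R$), so \cite[Prop. 3.13]{BS22} applies to the sub-ideal $(p,x_1,\dots,x_d)$ and produces a bounded prism $C'$ which is $p$-completely flat over $B\widehat\otimes_A R$. In $C'$ one has $e^2-e\in(x_1,\dots,x_d)\subseteq pC'$, so $e$ is idempotent modulo $p$ and lifts uniquely to an idempotent $\hat e\in C'$; further adjoining $e/p$ then merely passes to the direct factor $C:=C'_{1-\hat e}$, since on the complementary factor $e$ becomes invertible modulo $p$ and the $p$-completed envelope there vanishes. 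Thus $C$ is a bounded prism with ideal $(p)$ and $B\widehat\otimes_A R\to C$ is $p$-completely flat.

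Now the three checks. \emph{(i) $(C,(p))$ is an object of $(R_0/A)_{\Prism}$:} the composite $B\widehat\otimes_A R\to C\to C/p$ kills $I$ (as $I$ maps into $IC\subseteq pC$), hence factors as $B\widehat\otimes_A R\twoheadrightarrow (B\widehat\otimes_A R)/I=B/p\xrightarrow{\psi}C/p$; a direct computation then shows that both composites $R_0\to C/p$ — through $R/p=R_0$ and through $\theta_B$ — equal $\psi\circ\theta_B$, so they coincide, and taking this common map as structure map makes $(B,(p))\to(C,(p))$ and $(R,(p))\to(C,(p))$ morphisms of the site. \emph{(ii) Universal property:} a cocone $(B,(p))\to(D,(p))\leftarrow(R,(p))$ in the site is the same as a $\delta$-$A$-algebra map $B\widehat\otimes_A R\to D$ for which the two structure maps from $R_0$ agree, equivalently for which $I$ is carried into $\Ker(D\to D/p)=pD$ (this kernel equals $pD$ because $p$ is a nonzerodivisor on the prism $D$); by the universal property of the prismatic envelope it factors uniquely through $C$, so $C$ is the coproduct of $(B,(p))$ and $(R,(p))$ in $(R_0/A)_{\Prism}$. \emph{(iii) Covering:} since $R$ is $p$-completely flat over $A$, $B\widehat\otimes_A R$ is $p$-completely flat over $B$, hence so is $C$; for faithful flatness it suffices that $\Spec(C/p)\to\Spec(B/p)$ be surjective, and this is where the idempotent $e$ enters — modulo $(p,x_1,\dots,x_d)$ it cuts out exactly the component of $B\widehat\otimes_A R$ along which the (\'etale, possibly non-surjective) projection $\frakX_0\times_{\bG_{m,A_0}^d}\Spec(B/p)\to\Spec(B/p)$ acquires a section, namely the graph of $\theta_B$, so that component, and with it $C/p$, surjects onto $\Spec(B/p)$. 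Hence $B\to C$ is $p$-completely faithfully flat, $(C,(p))\to(B,(p))$ is a covering, and as $(R,(p))\to(C,(p))$ exists, $(R,(p))$ covers the final object of $\Sh((R_0/A)_{\Prism})$.

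The main obstacle is the flatness analysis behind $C$: pinning down $I$ precisely enough to apply \cite[Prop. 3.13]{BS22} — in particular separating the $p$-completely regular part $x_1,\dots,x_d$ from the idempotent correction $e$ — and, for the covering statement, controlling $C/p$ well enough to see the surjectivity onto $\Spec(B/p)$. Everything else is formal bookkeeping with universal properties.
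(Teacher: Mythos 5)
Your overall strategy is sound, and it is worth noting that the paper itself offers no argument here: its proof of this lemma is the single sentence that it ``follows from [Tia23, Lem.~4.2] and its proof'', so you are in effect reconstructing Tian's argument. Your description of $I$ (generated by $p$, the coordinate differences $x_i$, and one idempotent-type element $e$, exactly in parallel with Remark~\ref{rmk: On Ker(Delta)}), the two-step construction of the envelope (regular part first via \cite[Prop.~3.13]{BS22}, then the idempotent, which simply discards one clopen factor), and the verification of the universal property in (ii) --- where you correctly translate ``the two structure maps $R_0\rightrightarrows D/p$ agree'' into ``$I$ lands in $pD$'' --- all match how this is done in \cite{Tia23} and in the paper's own Lemma~\ref{lem:Structure of D} and Proposition~\ref{prop:key} for the self-coproduct.

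There is, however, one incorrect assertion that your covering argument leans on: you claim that \cite[Prop.~3.13]{BS22} makes $C'$ \emph{$p$-completely flat over $B\widehat\otimes_AR$}, and you deduce flatness of $C$ over $B$ by composing with the flat map $B\to B\widehat\otimes_AR$. Prismatic envelopes are essentially never flat over the ring they are formed from: what Prop.~3.13 gives is that $C'/p$ is a ($p$-completed free) pd-algebra over the \emph{quotient} $(B\widehat\otimes_AR)/(p,x_1,\dots,x_d)$, which is a proper quotient of $(B\widehat\otimes_AR)/p$ cut out by the regular sequence $\bar x_1,\dots,\bar x_d$; so $C'/p$ is not flat over $(B\widehat\otimes_AR)/p$ once $d\geq 1$, and the deduction as written fails. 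The conclusion you want is still true, by the correct route: $C'/p$ is faithfully flat over $(B\widehat\otimes_AR)/(p,\underline x)\cong B/p\otimes_{A_0[\underline T^{\pm1}]}R_0$, which is \'etale, hence flat, over $B/p$; therefore $C/p=(C'/p)_{1-\hat e}$ is flat over $B/p$, and your surjectivity argument via the graph component (which is correct) upgrades this to faithful flatness. With that repair the proof goes through. A smaller caveat, which you share with the paper's citation: the hypothesis of \cite[Prop.~3.13]{BS22} is that the sequence be $(p)$-completely regular \emph{relative to $A$}, not merely Koszul-regular inside the $p$-completely \'etale $B\za\underline T^{\pm1}\ya$-algebra $B\widehat\otimes_AR$; since an arbitrary object $(B,(p))$ of the site need not be flat over $A$, this point deserves a word (it is what the appeal to \cite[Lem.~4.2]{Tia23} is quietly absorbing).
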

  \begin{proof}
      The result follows from \cite[Lem. 4.2]{Tia23} and its proof immediately.
  \end{proof}

  Now, we are going to describe the self-coproduct of $(R,(p))$ in $(R_0/A)_{\Prism}$, following the strategy in the proof of \cite[Thm. 2.3.13]{Ogu22}. 
  \begin{const}\label{const:p-dilatation}
      Let $C$ be the $p$-adic completion of the following ring 
      \[R\widehat \otimes_AR[\frac{T_1-S_1}{p},\dots,\frac{T_d-S_d}{p}] = (R\widehat \otimes_AR)[Y_1,\dots,Y_d]/(pY_1-(T_1-S_1),\dots,pY_d-(T_d-S_d)).\]
      As $R\widehat \otimes_AR$ is $p$-torsion free and $p, Y_1,\dots,Y_d,T_1-S_1,\dots,T_d-S_d$ form a regular sequence in
      \[(R\widehat \otimes_AR)[Y_1,\dots,Y_d],\]
      we know that $C$ is also $p$-torsion free, and $p,\frac{T_1-S_1}{p},\dots,\frac{T_d-S_d}{p}$ form a regular sequence in $C$. It is easy to check that
      \[C_0:=C/p = R_0\otimes_{A_0[\underline T]}R_0[Y_1,\dots,Y_d]\]
      is the polynomial algebra over $R_0\otimes_{A_0[\underline T]}R_0$ with free variables $Y_i$'s (which are the images of $\frac{T_i-S_i}{p}$'s).
      The diagonal map $\Delta:R\widehat \otimes_AR\to R$ induces a natural map $C\to R$ sending each $\frac{T_i-S_i}{p}$ to $0$, whose reduction modulo $p$ factors as
      \[C_0 = R_0 \otimes_{A_0[\underline T]}R_0[Y_1,\dots,Y_d]\to D_0:=R_0[Y_1,\dots,Y_d]\to R_0.\]
      Via the section $s$ in Remark \ref{rmk: On Ker(Delta)}, we can regard $D_0$ as an \'etale $C_0$-algebra. Thus the map $C_0\to D_0$ lifts to a \emph{unique} $p$-completely \'etale morphism $C\to D$ with $D/p = D_0$, which admits a section compatible with $s$ in Remark \ref{rmk: On Ker(Delta)}. In particular, the kernel $\Ker(C\to D)$ is generated by an idemponent $e^{\prime}\in C$. Let $e$ be as in Remark \ref{rmk: On Ker(Delta)} and we identify it with its image in $C$ via the canonical map $R\widehat \otimes_AR\to C$. By constructions of $e$ and $e^{\prime}$, we have
      \[e\equiv e^{\prime}\mod pC.\]
      Note that $D$ is a $C$-algebra and thus an $(R\widehat \otimes_AR)$-algebra.
  \end{const}
  \begin{lem}\label{lem:Structure of D}
      Keep notations of Construction \ref{const:p-dilatation}.
      \begin{enumerate}
          \item[(1)] Regard $D$ as an $R$-algebra via the first (or the second) component of $R\widehat \otimes_AR$. Then the morphism of $R$-algebras
          \[R[Y_1,\dots,Y_d]^{\wedge}_p\to D\]
          mapping each $Y_i$ to the image of $\frac{T_i-S_i}{p}\in C$ via the map $C\to D$ is an isomorphism. Here, $R[Y_1,\dots,Y_d]^{\wedge}_p$ is the $p$-completion of the polynomial ring $R[Y_1,\dots,Y_d]$ with free variables $Y_1,\dots,Y_d$ over $R$.

          \item[(2)] The natural map $C[Y]\to C[\frac{e^{\prime}}{p}]$ sending $Y$ to $\frac{e^{\prime}}{p}$ induces an isomorphism 
          \[C[Y]/(pY^2-Y,pY-e^{\prime})\cong C[\frac{e^{\prime}}{p}].\]

          \item[(3)] Let $C[\frac{e^{\prime}}{p}]^{\wedge}_p$ be the $p$-adic completion of $C[\frac{e^{\prime}}{p}]$. Then the morphism $C\to D$ extends canonically to an isomorphism $C[\frac{e^{\prime}}{p}]^{\wedge}_p\xrightarrow{\cong} D$.

          \item[(4)] Let $R\widehat \otimes_AR[\frac{J}{p}]^{\wedge}_p$ be the $p$-adic completion of $R\widehat \otimes_AR[\frac{J}{p}]$, where $J = \Ker(\Delta:R\widehat \otimes_A R\to R)$ as before.
          Then the natural map $R\widehat \otimes_AR\to D$ induces an isomorphism of $(R\widehat \otimes_AR)$-algebras $R\widehat \otimes_AR[\frac{J}{p}]^{\wedge}_p\cong D$. As a consequence, $\frac{e}{p}$ is contained in the $p$-complete ideal of $D$ generated by $(Y_1,\dots,Y_d)$ (cf. Item (1)).
      \end{enumerate}
  \end{lem}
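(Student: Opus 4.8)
The plan is to establish the four items in order, the key inputs being the rigidity of $p$-completely \'etale morphisms for Item~(1) and the idempotent $e^{\prime}$ of Construction~\ref{const:p-dilatation} for Items~(2)--(4). For Item~(1): the first component $R\to R\widehat \otimes_AR\to C\to D$, together with the elements $Y_i\in D$, produces a map $R[Y_1,\dots,Y_d]^{\wedge}_p\to D$. Since $D$ is $p$-complete and $p$-torsion free (being $p$-completely \'etale over the $p$-torsion-free ring $C$), it suffices to check that this map is an isomorphism modulo $p$; and modulo $p$ it is the identity of $R_0[Y_1,\dots,Y_d]=D_0$, once one unwinds the construction of $D_0$ through the section $s$ of Remark~\ref{rmk: On Ker(Delta)} (so that $R_0\otimes_{A_0[\underline T]}R_0\to D_0$ is the projection modulo the idempotent $1-s(1)$, split by the first component, whence $R_0\to C_0\to D_0$ is the identity). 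The case of the second component is symmetric, via $T_i\leftrightarrow S_i$.

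For Items~(2) and~(3): since $e^{\prime}$ is an idempotent of $C$, I would use the ring decomposition $C\cong Ce^{\prime}\times C(1-e^{\prime})$. On the factor $Ce^{\prime}$ one has $e^{\prime}=1$, so adjoining $\frac{e^{\prime}}{p}$ gives $Ce^{\prime}[1/p]$, and the relations $pY^2-Y$, $pY-e^{\prime}$ become $pY-1$ (with $pY^2-Y=Y(pY-1)$ redundant), again giving $Ce^{\prime}[1/p]$; on the factor $C(1-e^{\prime})$ one has $e^{\prime}=0$, so $\frac{e^{\prime}}{p}$ adds nothing, while $pY-e^{\prime}=pY$ together with $pY^2-Y$ forces $Y=pY^2=0$, giving $C(1-e^{\prime})$. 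Comparing factor by factor proves Item~(2). For Item~(3) pass to $p$-adic completions: $Ce^{\prime}[1/p]$ has $p$ invertible, so its $p$-completion vanishes, while $C(1-e^{\prime})=C/e^{\prime}C$ is a direct summand of the $p$-complete ring $C$, hence already $p$-complete, and by Construction~\ref{const:p-dilatation} it is precisely $D$ (the quotient $C\twoheadrightarrow C/e^{\prime}C$ is $p$-completely \'etale, with reduction $D_0=C_0/\overline{e^{\prime}}\,C_0$). This gives $C[\frac{e^{\prime}}{p}]^{\wedge}_p\cong D$, compatibly with $C\to D$.

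For Item~(4): by Remark~\ref{rmk: On Ker(Delta)}, $J$ is generated by $T_1-S_1,\dots,T_d-S_d$ and $e$, so $R\widehat \otimes_AR[\frac{J}{p}]=\big((R\widehat \otimes_AR)[\frac{T_1-S_1}{p},\dots,\frac{T_d-S_d}{p}]\big)[\frac{e}{p}]$, and the first stage has $p$-completion $C$ by Construction~\ref{const:p-dilatation}; adjoining $\frac{e}{p}$ and $p$-completing once more identifies $R\widehat \otimes_AR[\frac{J}{p}]^{\wedge}_p$ with $C[\frac{e}{p}]^{\wedge}_p$. Since $e\equiv e^{\prime}\bmod pC$, the elements $\frac{e}{p}$ and $\frac{e^{\prime}}{p}$ of $C[1/p]$ differ by an element of $C$, so $C[\frac{e}{p}]=C[\frac{e^{\prime}}{p}]$, and Item~(3) gives $R\widehat \otimes_AR[\frac{J}{p}]^{\wedge}_p\cong D$. (To bypass the ``completion in stages'' step, one can instead build mutually inverse maps from Item~(1): the natural map $R\widehat \otimes_AR\to D$ carries $J$ into $pD$ and hence extends to $\phi\colon R\widehat \otimes_AR[\frac{J}{p}]^{\wedge}_p\to D$, with candidate inverse $D\cong R[Y_1,\dots,Y_d]^{\wedge}_p\to R\widehat \otimes_AR[\frac{J}{p}]^{\wedge}_p$, $Y_i\mapsto\frac{T_i-S_i}{p}$; one then checks $\phi$ is a split surjection which is an isomorphism modulo $p$.) For the last assertion, the composite $D\to R$ extending $\Delta$ has kernel the $p$-complete ideal $(Y_1,\dots,Y_d)$ by Item~(1); since $e\in J=\Ker(\Delta)$, the element $\frac{e}{p}\in D$ maps to $\Delta(e)/p=0$, and $D$ being $p$-torsion free this forces $\frac{e}{p}\in(Y_1,\dots,Y_d)$.

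The hard part will be the bookkeeping in Item~(4), where one has to use at once the explicit generators of $J$ from Remark~\ref{rmk: On Ker(Delta)} and the fact that the non-idempotent $e$ reduces modulo $p$ to the genuine idempotent $e^{\prime}$ of Construction~\ref{const:p-dilatation}, and be careful that adjoining $\frac{J}{p}$ and $p$-completing may legitimately be done one generator at a time (which the explicit-maps alternative is designed to circumvent). In Item~(1) the only delicate point is to pin down which \'etale algebra over which base is in play, so that $D_0$ really is the trivial \'etale $R_0[Y_1,\dots,Y_d]$-algebra; once that is fixed, everything reduces to a reduction mod $p$ together with $p$-torsion-freeness.
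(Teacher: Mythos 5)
Your proposal is correct and, for Items (1)--(3) and the main isomorphism of Item (4), follows essentially the same route as the paper: reduction mod $p$ plus $p$-torsion-freeness for (1), the idempotent decomposition $C\cong Ce^{\prime}\times C(1-e^{\prime})$ with a factor-by-factor check for (2) and (3) (the $p$-completion killing the factor where $p$ is inverted), and the identification $R\widehat\otimes_AR[\frac{J}{p}]^{\wedge}_p=C[\frac{e}{p}]^{\wedge}_p=C[\frac{e^{\prime}}{p}]^{\wedge}_p$ via $e\equiv e^{\prime}\bmod pC$ for (4).

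The one place you genuinely diverge is the final assertion of Item (4), that $\frac{e}{p}$ lies in the $p$-complete ideal $\calI=(Y_1,\dots,Y_d)$ of $D$. The paper argues algebraically: writing $e^2-e=\sum_i a_i(T_i-S_i)$ (Remark \ref{rmk: On Ker(Delta)}) gives $\frac{e}{p}=\sum_i a_iY_i-p(\frac{e}{p})^2\in\calI+p\calJ^2$, where $\calJ$ is the $p$-complete ideal generated by $\frac{J}{p}$, and then concludes $\calJ\subset\calI$ by Nakayama since $p$ is topologically nilpotent. You instead observe that the map $D\to R$ extending $\Delta$ (which exists by formal \'etaleness of $C\to D$, or directly from Item (1)) has kernel exactly $(Y_1,\dots,Y_d)$, and that the image $r$ of $\frac{e}{p}$ satisfies $pr=\Delta(e)=0$, hence $r=0$ by $p$-torsion-freeness of $R$ (it is the target $R$, not $D$, whose torsion-freeness you need here — a harmless slip in your write-up). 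Both arguments are valid; yours is arguably cleaner and avoids Nakayama, while the paper's is self-contained at the level of ideals and does not require identifying $\Ker(D\to R)$. Your explicit-maps alternative for the completion-in-stages point in Item (4) is a reasonable safeguard, though the paper glosses over that point in the same way your main argument does.
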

  \begin{proof}
      For Item (1): As both $R[Y_1,\dots,Y_d]^{\wedge}_p$ and $D$ are $p$-complete and $p$-torsion free, we are reduced to checking the isomorphism modulo $p$. But this is clear as $D_0 = D/p = R_0[Y_1,\dots,Y_d]$ by Construction \ref{const:p-dilatation}.

      For Item (2): As $(e^{\prime})^2 = e^{\prime}$ in $C$, we see the map $C[Y]\to C[\frac{e^{\prime}}{p}]$ factors as
      \[C[Y]\to C[Y]/(pY^2-Y,pY-e^{\prime})\to C[\frac{e^{\prime}}{p}].\]
      It remains to prove the above map is an isomorphism. Let $C^{\prime}$ and $C^{\prime\prime}$ be the component of $C$ on which $e^{\prime} = 0$ and $e^{\prime} = 1$, respectively. It suffices to check that the morphism
      \[C^{\prime}[Y]/(pY^2-Y,pY)\to C^{\prime}\] 
      sending $Y$ to $0$ and the morphism
      \[C^{\prime\prime}[Y]/(pY^2-Y,pY-1)\to C^{\prime\prime}[\frac{1}{p}]\]
      sending $Y$ to $\frac{1}{p}$ are isomorphisms. For the first, we can conclude as
      \[C^{\prime}[Y]/(pY^2-Y,pY) = C^{\prime}[Y]/(Y) = C^{\prime}.\]
      For the second, we can conclude as
      \[C^{\prime\prime}[Y]/(pY^2-Y,pY-1) = C^{\prime\prime}[Y]/(pY-1) = C^{\prime\prime}[\frac{1}{p}].\]

      For Item (3): By Construction \ref{const:p-dilatation}, we know that $D = C^{\prime}$. Then the result follows as 
      \[C[\frac{e^{\prime}}{p}] = C^{\prime}\oplus C^{\prime\prime}[\frac{1}{p}].\]

      For Item (4): Recall $J$ is generated by $(T_1-S_1,\dots,T_d-S_d,e)$ (cf. Remark \ref{rmk: On Ker(Delta)}). We have 
      \[R\widehat \otimes_AR[\frac{J}{p}]^{\wedge}_p = R\widehat \otimes_AR[\frac{T_1-S_1}{p},\dots,\frac{T_d-S_d}{p},\frac{e}{p}]^{\wedge}_p = C[\frac{e}{p}]^{\wedge}_p = C[\frac{e^{\prime}}{p}]^{\wedge}_p,\]
      where the last identity follows as $e\equiv e^{\prime}\mod pC$ (cf. Construction \ref{const:p-dilatation}). So we can conclude the desired isomorphism by using Item (3). 
      
      It remains to show that $\frac{e}{p}$ is contained in the $p$-complete ideal $\calI$ of $D$ generated by $Y_i$'s. Let $\calJ$ be the $p$-complete ideal of $D$ generated by $\frac{J}{p}$. By Remark \ref{rmk: On Ker(Delta)}, one can find $a_i\in R\widehat \otimes_AR$ such that
      \[e^2-e = \sum_{i=1}^da_i(T_i-S_i).\]
      From this, we have
      \[\frac{e}{p} = \sum_{i=1}^da_i\frac{T_i-S_i}{p}-p(\frac{e}{p})^2 = \sum_{i=1}^da_iY_i-p(\frac{e}{p})^2\in \calI+p\calJ^2.\]
      Therefore, we have $\calJ\subset \calI+p\calJ^2$. So one can conclude by Nakayama's lemma as $p$ is topologically nilpotent on $D$.
  \end{proof}

 Now, let $\delta_1$ and $\delta_2$ be (not necessarily different) $\delta$-structures on $R$ compatible with that on $A$ and let $\varphi_1$ and $\varphi_2$ be the induced Frobenius endomorphisms of $R$ respectively. So we get two structures of crystalline prism on $R$. To distinguish them, we denote by $(R,(p),\delta_i)$ the prism induced by $\delta_i$. When contexts are clear, we often write $(R_i,(p))$ instead of $(R,(p),\delta_i)$ for short.
 
  \begin{cor}\label{cor:self product for two delta structure}
      The coproduct of $(R_1,(p))$ with $(R_2,(p))$ exists in $(R_0/A)_{\Prism}$, and is given by
      \[R_1\widehat \otimes_{A}R_2\{\frac{J}{p}\}^{\wedge}_p,\]
      where $J=\Ker(\Delta:R_1\widehat \otimes_A R_2 = R\widehat \otimes_A R \to R)$ as before.
  \end{cor}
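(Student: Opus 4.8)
The plan is to reduce the statement to the already-available description of coproducts in Lemma \ref{lem:cover} and then identify the relevant prismatic envelope with the ring constructed via the $p$-dilatation in Construction \ref{const:p-dilatation} and Lemma \ref{lem:Structure of D}. First I would invoke Lemma \ref{lem:cover} with $(B,(p)) = (R_2,(p))$, which shows that the coproduct of $(R_2,(p))$ with $(R_1,(p))$ exists in $(R_0/A)_{\Prism}$ and is given by the prismatic envelope $\big(R_2\widehat\otimes_A R_1\big)\{\frac{I}{p}\}^\wedge_p$, where $I = \Ker(R_2\widehat\otimes_A R_1\to R_0)$. The point is to compare this with $R_1\widehat\otimes_A R_2\{\frac{J}{p}\}^\wedge_p$ for $J = \Ker(\Delta: R\widehat\otimes_A R\to R)$. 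Since the underlying ring $R_2\widehat\otimes_A R_1 = R\widehat\otimes_A R$ does not depend on the $\delta$-structures, and since $I$ and $J$ have the same image after reduction modulo $p$ (both being the kernel of the reduction $R_0\otimes_{A_0} R_0\to R_0$, up to the distinction between $J$ and $I = J + pR\widehat\otimes_A R$), forming the prismatic envelope over $I$ versus $J$ should yield canonically isomorphic rings; this is because inverting $\frac{J}{p}$ already inverts $\frac{p}{p}=1$'s worth of $I$, i.e. the extra generators of $I$ over $J$ are divisible by $p$.

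The key technical step is then to show that the prismatic envelope $R\widehat\otimes_A R\{\frac{J}{p}\}^\wedge_p$ agrees with the $p$-torsion-free ring $R\widehat\otimes_A R[\frac{J}{p}]^\wedge_p$, i.e. that taking the $\delta$-closure of the $p$-dilatation introduces nothing new. Here I would use that $R\widehat\otimes_A R$ is $p$-torsion free and that $J$ is generated by the regular sequence $T_1-S_1,\dots,T_d-S_d$ together with the idempotent-like element $e$ (Remark \ref{rmk: On Ker(Delta)}). By Lemma \ref{lem:Structure of D}(4) we already know $R\widehat\otimes_A R[\frac{J}{p}]^\wedge_p\cong D$, and by Lemma \ref{lem:Structure of D}(1) this $D$ is a $p$-complete polynomial ring $R[Y_1,\dots,Y_d]^\wedge_p$ over $R$, which is visibly $p$-torsion free. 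One then checks that $R\widehat\otimes_A R[\frac{J}{p}]^\wedge_p$ satisfies the universal property of the prismatic envelope: given any $(B,(p))\in(R_0/A)_{\Prism}$ with a map $R\widehat\otimes_A R\to B$ carrying $J$ into $I_B = (p)$, since $B$ is $p$-torsion free the elements $\frac{T_i-S_i}{p}$ and $\frac{e}{p}$ have well-defined images in $B$, giving a unique factorization through $R\widehat\otimes_A R[\frac{J}{p}]^\wedge_p$; the resulting ring is a prism because it is $p$-torsion free, $p$-complete, and carries the natural $\delta$-structure (compatible with $\delta_1$ on the first factor and $\delta_2$ on the second). The $\delta$-structure exists precisely because the $p$-dilatation of a $p$-torsion-free $\delta$-ring along a $\delta$-ideal-like configuration remains a $\delta$-ring; concretely, $\delta$ of each $Y_i = \frac{T_i-S_i}{p}$ can be computed inside $(R\widehat\otimes_A R)[1/p]$ and lands in $D$ by Lemma \ref{lem:Structure of D}(1).

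The main obstacle I anticipate is verifying that $R\widehat\otimes_A R[\frac{J}{p}]^\wedge_p$ genuinely carries a $\delta$-structure restricting to $\delta_1,\delta_2$ on the two copies of $R$, because $J$ itself need not be a $\delta$-ideal for the chosen $\delta$-structures $\delta_1,\delta_2$ (the diagonal $\Delta$ is only $\varphi$-equivariant when a \emph{single} $\delta$-structure is used on both factors). However, this is exactly the content of Ogus's observation cited as Proposition \ref{prop:key}: the ring $R\widehat\otimes_A R[\frac{J}{p}]^\wedge_p \cong R[Y_1,\dots,Y_d]^\wedge_p$ admits a $\delta$-structure because $\delta(Y_i) = \delta\big(\frac{T_i-S_i}{p}\big)$, computed in $(R\widehat\otimes_A R)[1/p]$, lies in the subring $D$ — this uses $p$-torsion-freeness of $D$ together with the identity $\varphi(pY_i) = \varphi(T_i) - \varphi(S_i) = \varphi_1(T_i) - \varphi_2(S_i)$ and that $\varphi_1(T_i) - T_i^p$, $\varphi_2(S_i) - S_i^p$ are each in $pR$. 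Once the $\delta$-structure is produced and shown to be unique, the coproduct property is automatic, and combining with Lemma \ref{lem:cover} (applied to $B = R_2$) and the identification $I \equiv J$ modulo the evident $p$-divisible discrepancy completes the proof.
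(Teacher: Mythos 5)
Your first paragraph is exactly the paper's proof and already suffices: the corollary is literally a special case of Lemma \ref{lem:cover} applied with $(B,(p))=(R_2,(p))$, together with the (correct) observation that $I=\Ker(R_1\widehat\otimes_AR_2\to R_0)$ differs from $J$ only by the ideal $(p)$, so the two prismatic envelopes $\{\tfrac{I}{p}\}^{\wedge}_p$ and $\{\tfrac{J}{p}\}^{\wedge}_p$ coincide. Note also that the corollary needs no hypothesis relating $\delta_1$ and $\delta_2$; in particular Assumption \ref{assumpsion:key} plays no role here. You could have stopped there.

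The ``key technical step'' you then propose is both unnecessary for this statement and mathematically wrong. You claim that the prismatic envelope $R_1\widehat\otimes_AR_2\{\tfrac{J}{p}\}^{\wedge}_p$ coincides with the $p$-dilatation $R\widehat\otimes_AR[\tfrac{J}{p}]^{\wedge}_p\cong D=R[Y_1,\dots,Y_d]^{\wedge}_p$, on the grounds that $\delta(Y_i)$ already lies in $D$. It does not: the computation in the proof of Proposition \ref{prop:key} gives $\varphi(Y_i)=pz_i$ with $z_i\in D$ (and even this uses Assumption \ref{assumpsion:key}), whence $\delta(Y_i)=z_i-\tfrac{Y_i^p}{p}$, and $\tfrac{Y_i^p}{p}$ is \emph{not} an element of the $p$-completed polynomial ring $D$. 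So $D$ is not $\delta$-stable, and the $\delta$-closure genuinely adds the divided powers $\tfrac{Y_i^{p^n}}{p^{1+p+\cdots+p^{n-1}}}$; the envelope is the $p$-complete free pd-algebra $R_j[\underline Y]^{\wedge}_{\pd}$, strictly larger than $D$. This is precisely the content of Proposition \ref{prop:key}, which you cite but misquote (it identifies the envelope with $R_j[\underline Y]^{\wedge}_{\pd}$, not with $R[\underline Y]^{\wedge}_p$). Relatedly, $D$ cannot ``satisfy the universal property of the prismatic envelope'' as you assert, since it is not a prism. None of this affects the truth of the corollary, but as written your argument for it contains a false step.
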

  \begin{proof}
      This is a special case of Lemma \ref{lem:cover}
  \end{proof}

  The following assumption is essential for our calculation.
  
  \begin{assumption}\label{assumpsion:key}
      Let $R$ be $p$-completely smooth $A$-algebra equipped with $\delta$-structures $\delta_1$ and $\delta_2$ as above. Assume that $\delta_1$ and $\delta_2$ satisfy one of the following equivalent conditions:
      \begin{enumerate}
          \item[(1)] For any $x\in R$, we have $\varphi_1(x) \equiv \varphi_2(x)\mod p^2$.

          \item[(2)] For any $x\in R$, we have $\delta_1(x)\equiv \delta_2(x)\mod p$.
      \end{enumerate}
  \end{assumption}

   To understand the structure of $R_1\widehat \otimes_{A}R_2\{\frac{J}{p}\}^{\wedge}_p$, we need the following lemma.
  
  \begin{lem}\label{lem:difference of delta}
      Assume $R$ is equipped with a $\delta$-structure compatible with that on $A$ and let $(R,(p))$ and $(R\widehat \otimes_AR,(p))$ be the induced crystalline prisms. Then for any $1\leq i\leq d$, the difference $\delta(T_i)-\delta(S_i)$ belongs to $J=\Ker(\Delta:R\widehat \otimes_{A}R\to R)$.
  \end{lem}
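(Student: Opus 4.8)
The plan is to reduce the statement to the one fact, recorded in the paragraph just before Lemma~\ref{lem:cover}, that once $R$ carries a $\delta$-structure compatible with that on $A$ (so that $R\widehat\otimes_A R$ carries its canonical tensor-product $\delta$-structure), the diagonal map $\Delta\colon R\widehat\otimes_A R\to R$ is a morphism of $\delta$-rings. The only combinatorial input is that $\Delta$ identifies the two systems of coordinates, i.e. $\Delta(T_i)=T_i=\Delta(S_i)$ for every $i$, where on the source $T_i$ and $S_i$ denote the images of the coordinate $T_i$ under the first and the second structure map $R\to R\widehat\otimes_A R$.

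Granting this, the argument is immediate: since $\delta$ commutes with every morphism of $\delta$-rings, one has
\[
\Delta\bigl(\delta(T_i)\bigr)=\delta\bigl(\Delta(T_i)\bigr)=\delta(T_i)=\delta\bigl(\Delta(S_i)\bigr)=\Delta\bigl(\delta(S_i)\bigr),
\]
where the middle $\delta$'s are those of $R$; hence $\Delta\bigl(\delta(T_i)-\delta(S_i)\bigr)=0$, that is, $\delta(T_i)-\delta(S_i)\in\Ker(\Delta)=J$. If one wishes to avoid invoking that $\Delta$ is a $\delta$-morphism, one can instead compute with the Frobenius $\varphi$ on $R\widehat\otimes_A R$: from $p\,\delta(T_i)=\varphi(T_i)-T_i^{\,p}$ and $p\,\delta(S_i)=\varphi(S_i)-S_i^{\,p}$ we get $p\bigl(\delta(T_i)-\delta(S_i)\bigr)=\varphi(T_i)-\varphi(S_i)-T_i^{\,p}+S_i^{\,p}$, and applying the $\varphi$-equivariant map $\Delta$ sends the right-hand side to $\varphi_R(T_i)-\varphi_R(T_i)-T_i^{\,p}+T_i^{\,p}=0$; since $R$ is $p$-torsion free, $\Delta\bigl(\delta(T_i)-\delta(S_i)\bigr)=0$ as well, giving the claim.

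There is essentially no obstacle beyond bookkeeping: the one point deserving a moment's care is the identification of the relevant $\delta$-structure on $R\widehat\otimes_A R$ — namely, the one for which both structure maps $R\to R\widehat\otimes_A R$ and the multiplication map $\Delta$ are $\delta$-homomorphisms — and this is precisely what was set up preceding Lemma~\ref{lem:cover}. This lemma is exactly what is needed to make sense of the elements $\frac{\delta(T_i)-\delta(S_i)}{p}$ inside the prismatic envelope $R\widehat\otimes_A R\{\tfrac{J}{p}\}^{\wedge}_p$ appearing in Corollary~\ref{cor:self product for two delta structure}, so no further refinement of the statement is required.
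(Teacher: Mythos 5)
Your proposal is correct, and your second argument (using $p\,\delta = \varphi - (\cdot)^p$, the $\varphi$-equivariance of $\Delta$, and $p$-torsion freeness of $R$) is exactly the paper's proof. Your first argument is a mild repackaging of the same content — since the paper has already recorded that $\Delta$ is a morphism of $\delta$-rings, applying $\Delta\circ\delta=\delta\circ\Delta$ directly to $T_i$ and $S_i$ gives the claim without needing torsion-freeness — so both routes are fine and essentially coincide with the paper's.
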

  \begin{proof}
      Note that both $\varphi(T_i)-\varphi(S_i)$ and $T_i^p-S_i^p$ belong to $J$ as they are killed by the diagonal map $\Delta$. We deduce from 
      \[p(\delta(T_i)-\delta(S_i)) = \big(\varphi(T_i)-\varphi(S_i)\big)-\big(T_i^p-S_i^p\big)\]
      that $p(\delta(T_i)-\delta(S_i))\in J$. So we can finally conclude as $R$ is $p$-torsion free.
  \end{proof}

  Now, let $\delta_1$ and $\delta_2$ be (not necessarily different) $\delta$-structures on $R$ compatible with that on $A$ and let $(R_1,(p))$ and $(R_2,(p))$ be the corresponding prisms.
  Under Assumption \ref{assumpsion:key}, we are able to describe the structure of $R_1\widehat \otimes_{A}R_2\{\frac{J}{p}\}^{\wedge}_p$. The next proposition is a slight generalization of a theorem of Ogus \cite[Thm. 2.3.13(1)]{Ogu22} when $A_0$ is \emph{non-reduced}, and we will explain how to deduce it from loc.cit. in the case where $A_0$ is reduced (cf. Remark \ref{rmk:apply Ogus}).

  \begin{prop}[Ogus]\label{prop:key}
      Keep notations of Corollary \ref{cor:self product for two delta structure} and suppose Assumption \ref{assumpsion:key} holds true. Then for any $j\in\{1,2\}$, sending $Y_i$'s to $\frac{T_i-S_i}{p}$'s induces an isomorphism of $R\widehat \otimes_AR$-algebras
      \[\iota:R_j[Y_1,\dots,Y_d]^{\wedge}_{\pd}\xrightarrow{\cong} R_1\widehat \otimes_{A}R_2\{\frac{J}{p}\}^{\wedge}_p,\]
      where $R_j[Y_1,\dots,Y_d]^{\wedge}_{\pd}$ is the $p$-complete free pd-algebra over $R_j$ generated by $Y_i$'s.
  \end{prop}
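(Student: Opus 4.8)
The plan is to reduce everything to a statement modulo $p$ by exploiting $p$-torsion-freeness and $p$-adic completeness, and then to identify the mod-$p$ reduction of $R_1\widehat\otimes_A R_2\{\frac{J}{p}\}^\wedge_p$ with a polynomial (hence pd-trivial) algebra over $R_0$. First I would invoke Lemma \ref{lem:Structure of D}(4) (together with Corollary \ref{cor:self product for two delta structure} and the identification of the prismatic envelope with the $p$-completed pd-free construction, or rather its $\delta$-localisation) to reduce the description of $R_1\widehat\otimes_A R_2\{\frac{J}{p}\}^\wedge_p$ to a ring built on top of $D\cong R[Y_1,\dots,Y_d]^\wedge_p$, where the $Y_i$'s are the images of $\frac{T_i-S_i}{p}$. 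The source $R_j[Y_1,\dots,Y_d]^\wedge_{\pd}$ is $p$-torsion-free (pd-polynomial algebras over $p$-torsion-free rings are $p$-torsion-free) and $p$-adically complete, and the target is $p$-torsion-free by the prismatic envelope construction in \cite[Prop. 3.13]{BS22}; so it suffices to check that the map $\iota$ induced by $Y_i\mapsto\frac{T_i-S_i}{p}$ is well-defined and becomes an isomorphism after reducing mod $p$.

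Next I would address well-definedness, which is exactly where Assumption \ref{assumpsion:key} enters. The ring $R_1\widehat\otimes_A R_2\{\frac{J}{p}\}^\wedge_p$ is a $\delta$-ring, so one must verify that the elements $Y_i:=\frac{T_i-S_i}{p}$ admit the pd-structure — equivalently, that $\delta(Y_i)$ lies in the subring generated by the $Y_j$'s over $R_j$, or even more concretely that the $\delta$-envelope collapses to the pd-envelope on these generators. Here one computes $\delta(Y_i)=\delta\!\left(\frac{T_i-S_i}{p}\right)$; using $p\cdot Y_i=T_i-S_i$ and the $\delta$-ring identities, $\delta(pY_i)$ expands in terms of $\delta(p)$, $\delta(Y_i)$ and $\varphi$'s, and on the other side $\delta(T_i-S_i)=\varphi_1(T_i)-\varphi_2(S_i)-(T_i^p-S_i^p)$ up to lower-order terms, which by Assumption \ref{assumpsion:key}(1) (the congruence $\varphi_1\equiv\varphi_2\bmod p^2$, applied to the ``mixed'' expression after passing through the diagonal) is divisible by the appropriate power of $p$; combined with Lemma \ref{lem:difference of delta} this forces $\delta(Y_i)$ to lie in the image of the pd-polynomial ring. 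This is the step I expect to be the main obstacle: bookkeeping the $\delta$-structure on the $p$-dilatation carefully enough to see that no genuinely new generators (beyond $R_j$ and the $Y_i$'s with their divided powers) are created, and in particular that the idempotent-type element $e$ contributes nothing new — here Lemma \ref{lem:Structure of D}(4) is essential, since it says $\frac{e}{p}$ already lies in the ideal generated by the $Y_i$'s, so inverting/dividing $e$ adds nothing.

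Finally, with $\iota$ well-defined, I would check it is an isomorphism mod $p$. On the target, $\big(R_1\widehat\otimes_A R_2\{\frac{J}{p}\}^\wedge_p\big)/p$ is by Lemma \ref{lem:Structure of D} (Item (4) and the computation of $D_0$, $C_0$ in Construction \ref{const:p-dilatation}) identified with $R_0[Y_1,\dots,Y_d]$, a polynomial ring over $R_0$ in the $Y_i$'s; on the source, $\big(R_j[Y_1,\dots,Y_d]^\wedge_{\pd}\big)/p$ is the pd-polynomial algebra over $R_0$ on the $Y_i$'s. The map $\iota\bmod p$ sends $Y_i$ to $Y_i$ and is compatible with the $R_0$-structures (note $R_j/p=R_0$ for both $j$, since the two $\delta$-structures only differ in higher $p$-adic layers). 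Since over the $\Fp$-algebra $R_0$ the divided-power polynomial algebra and the ordinary polynomial algebra agree as rings on the nose (the pd-monomials $\underline Y^{[\underline n]}$ form a free $R_0$-basis, matching $\underline Y^{\underline n}$ up to the units $\underline n!$ being replaced by the pd-bracket — more precisely they are abstractly isomorphic as augmented $R_0$-algebras via $Y_i\mapsto Y_i$ once one is careful that in characteristic $p$ the pd-structure is the ``trivial'' one generated by $\gamma_n$), $\iota\bmod p$ is an isomorphism. Then derived Nakayama (both sides being $p$-complete, $p$-torsion-free, and $\iota$ an isomorphism mod $p$) upgrades this to the claimed isomorphism $\iota$, and the independence of $j$ follows because $R_1/p=R_2/p=R_0$ makes both choices restrict to the same mod-$p$ picture, while the $p$-torsion-free lifting is unique. $\qed$
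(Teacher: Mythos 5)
There is a genuine gap, and in fact a false statement at the heart of your reduction. You claim that $\bigl(R_1\widehat\otimes_A R_2\{\frac{J}{p}\}^{\wedge}_p\bigr)/p$ is identified by Lemma \ref{lem:Structure of D} with the polynomial ring $R_0[Y_1,\dots,Y_d]$, and then that over an $\Fp$-algebra the free pd-polynomial algebra and the ordinary polynomial algebra ``agree as rings on the nose.'' Both assertions are wrong. Lemma \ref{lem:Structure of D}(4) computes the mod-$p$ fibre of the \emph{dilatation} $D = R\widehat\otimes_AR[\frac{J}{p}]^{\wedge}_p$, not of the prismatic envelope, which is strictly larger (it contains all $\delta^n(Y_i)$). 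And over an $\Fp$-algebra one has $Y^p = p!\,Y^{[p]} = 0$ in the free pd-algebra, so $R_0\za Y\ya$ has nilpotents and is nothing like $R_0[Y]$; the correct statement (which is essentially equivalent to the proposition and hence cannot be assumed) is that the envelope mod $p$ is the pd-polynomial algebra. So the ``isomorphism mod $p$ plus derived Nakayama'' strategy cannot get off the ground: the mod-$p$ fibre of the target is precisely what one does not know until the envelope has been computed.

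The actual content of the proof, which you correctly flag as ``the main obstacle'' but do not carry out, is the explicit $\delta$-computation. The paper proceeds as follows: writing $\varphi(Y_i) = \frac{T_i^p - S_i^p}{p} + \bigl(\delta_1(T_i)-\delta_2(S_i)\bigr)$, one uses Lemma \ref{lem:difference of delta} together with Assumption \ref{assumpsion:key} to see that $\delta_1(T_i)-\delta_2(S_i)\in pD$, and then an elementary factorisation of $T_i^p-S_i^p$ gives $\varphi(Y_i)=pz_i$ with $z_i\in D$ written out explicitly. Hence $\delta(Y_i)=z_i-\frac{Y_i^p}{p}$, and an induction on $n$ shows that adjoining $\delta^1(Y_i),\dots,\delta^n(Y_i)$ to $D$ is the \emph{same} as adjoining $\frac{Y_i^{p^m}}{p^{1+p+\cdots+p^{m-1}}}$ for $m\le n$ (one needs both inclusions, i.e.\ the identity $\delta^n(Y_i)=z_{i,n-1}+(-1)^n\frac{Y_i^{p^n}}{p^{1+\cdots+p^{n-1}}}$ with $z_{i,n-1}$ in the previously constructed ring); after $p$-completion this is exactly $R_j[\underline Y]^{\wedge}_{\pd}$ by Lemma \ref{lem:Structure of D}(1). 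Your treatment of $\frac{e}{p}$ via Lemma \ref{lem:Structure of D}(4) and \cite[Lem. 2.17]{BS22} is the right move and matches the paper, but without the divisibility computation and the induction, the proposal asserts the conclusion rather than proving it.
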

  \begin{proof}
      Let $\delta$ and $\varphi$ respectively be the induced $\delta$-structure and Frobenius endomorphism on $R_1\widehat \otimes_{A}R_2\{\frac{J}{p}\}^{\wedge}_p$. Keep notations of Remark \ref{rmk: On Ker(Delta)} and Lemma \ref{lem:Structure of D}.
      We start with the ring $D=R\widehat \otimes_AR[\frac{J}{p}]^{\wedge}_p$. By Lemma \ref{lem:Structure of D}(4), for any $j\in \{1,2\}$, we have
      \[D = R_j[Y_1,\dots,Y_d]^{\wedge}_p.\]
      
      Recall that $R_1\widehat \otimes_{A}R_2\{\frac{J}{p}\}^{\wedge}_p$ is the $p$-adic completion $D[S]^{\wedge}_p$ of the ring $D[S]$, where 
      \[S:=\{\delta^n(Y_1),\dots,\delta^n(Y_d),\delta^n(\frac{e}{p})\mid n\geq 0\}.\]
      As $e$ belongs to the $p$-complete ideal of $D$ generated by $Y_i$'s (cf. Lemma \ref{lem:Structure of D}(4)), by \cite[Lem. 2.17]{BS22}, we see that $D[S]^{\wedge}_p$ is the $p$-adic completion of the ring $D[\Lambda]$, where 
      \[\Lambda = \cup_{n\geq 1}\Lambda_n \text{ and }\Lambda_n=\{\delta^m(Y_1),\dots,\delta^m(Y_d)\mid 1\leq m\leq n\}.\]
      
      On the other hand, $R_j[Y_1,\dots,Y_d]^{\wedge}_p$ is the $p$-adic completion of the ring $D[\Lambda^{\prime}]$, where 
      \[\Lambda^{\prime} = \cup_{n\geq 1}\Lambda^{\prime}_n \text{ and }\Lambda^{\prime}_n = \{\frac{Y_1^{p^m}}{p^{1+p+\cdots+p^{m-1}}},\dots,\frac{Y_d^{p^m}}{p^{1+p+\cdots+p^{m-1}}}\mid 1\leq m\leq n\}.\]
      Therefore, to conclude the result, it suffices to show that for any $n\geq 1$, we have
      \[D[\Lambda_n] = D[\Lambda_n^{\prime}].\]
      But this follows from the lemma below.

    \begin{lem}
       Keep notations as above. For any $n\geq 1$, we have that
      \[D[\Lambda_n] = D[\Lambda_n^{\prime}],\]
      and that for any $1\leq i\leq d$, there exists some $z_{i,n-1}\in D_{n-1}$ such that
      \[\delta^n(Y_i)= z_{i,n-1}+(-1)^n\frac{Y_i^{p^n}}{p^{1+p+\cdots+p^{n-1}}},\]
      where $D_0:=D$.
    \end{lem}
    \begin{proof}
        We are going to prove the result by induction on $n$. We first deal with the $n=1$ case. 

        By Lemma \ref{lem:difference of delta} (for $\delta = \delta_1$), for any $1\leq i\leq d$, we have
      \[\delta_1(T_i)-\delta_2(S_i) = \big(\delta_1(T_i)-\delta_1(S_i)\big)+\big(\delta_1(S_i)-\delta_2(S_i)\big) \in JD+pD\subset pD.\]
      Here, we use that $JD\subset pD$. Therefore, there exists some $b_i\in D$ such that
      \[\delta_1(T_i)-\delta_2(S_i) = pb_i.\]
      Now, for any $1\leq i\leq d$, we have
      \begin{equation*}\label{equ:key}
      \begin{split}
         \varphi(Y_i) = & \delta_1(T_i)-\delta_2(S_i)+\frac{T_i^p-S_i^p}{p}\\
         = & pb_i+Y_i\sum_{k=0}^{p-1}T_i^kS_i^{p-1-k}\\
         = & pb_i+Y_i\big(pS_i^{p-1}+\sum_{k=0}^{p-1}(T_i^k-S_i^k)S_i^{p-1-k}\big)\\
         = &  pb_i+Y_i\big(pS_i^{p-1}+pY_i\sum_{k=0}^{p-1}\frac{T_i^k-S_i^k}{T_i-S_i}S_i^{p-1-k}\big)\\
         = & p\big(b_i+Y_iS_i^{p-1}+Y_i^2\sum_{k=0}^{p-1}\frac{T_i^k-S_i^k}{T_i-S_i}S_i^{p-1-k}\big).
      \end{split}
      \end{equation*}
      Therefore, if we put 
      \[z_{i,0}:=b_i+Y_iS_i^{p-1}+Y_i^2\sum_{k=0}^{p-1}\frac{T_i^k-S_i^k}{T_i-S_i}S_i^{p-1-k}\]
      then it is an element in $D$ such that $\varphi(Y_i) = pz_{i,0}$, yielding that
      \[\delta(Y_i) = \frac{\varphi(Y_i)-Y_i^p}{p} = z_{i,0}-\frac{Y_i^p}{p}.\]
      So we have
      \[D[\Lambda_1^{\prime}] = D[\frac{Y_1^p}{p},\dots,\frac{Y_d^p}{p}] = D[z_{1,0}-\delta(Y_1),\dots,z_{d,0}-\delta(Y_d)] = D[\delta(Y_1),\dots,\delta(Y_d)] = D[\Lambda_1]\]
      as desired. This completes the proof for $n=1$.

      Now, assume that the result holds true for some $n\geq 1$. Then for any $1\leq i\leq d$, we have
      \[\begin{split}
          \delta^{n+1}(Y_i) = & \delta(z_{i,n-1}+(-1)^n\frac{Y_i^{p^n}}{p^{1+p+\cdots+p^{n-1}}})\\
          = & \delta(z_{i,n-1})+\delta((-1)^n\frac{Y_i^{p^n}}{p^{1+p+\cdots+p^{n-1}}})-\sum_{k=1}^{p-1}\frac{\binom{p}{k}}{p}z_{i,n-1}^k((-1)^n\frac{Y_i^{p^n}}{p^{1+p+\cdots+p^{n-1}}})^{p-k}.
      \end{split}\]
      As $z_{i,n-1}\in D[\Lambda_{n-1}]$, we have $\delta(z_{i,n-1})\in D[\Lambda_n]$. By inductive hypothesis, 
      \[(-1)^n\frac{Y_i^{p^n}}{p^{1+p+\cdots+p^{n-1}}}\in D[\Lambda_n^{\prime}] = D[\Lambda_n].\]
      We then have
      \[\delta^{n+1}(Y_i)- \delta((-1)^n\frac{Y_i^{p^n}}{p^{1+p+\cdots+p^{n-1}}}) = \delta(z_{i,n-1})-\sum_{k=1}^{p-1}\frac{\binom{p}{k}}{p}z_{i,n-1}^k((-1)^n\frac{Y_i^{p^n}}{p^{1+p+\cdots+p^{n-1}}})^{p-k}\in D[\Lambda_n].\]
      Using $\varphi(Y_i) = pz_{i,0}$ again, we have
      \[\begin{split}
          \delta((-1)^n\frac{Y_i^{p^n}}{p^{1+p+\cdots+p^{n-1}}}) = & \frac{1}{p}\big((-1)^n\varphi(\frac{Y_i^{p^n}}{p^{1+p+\cdots+p^{n-1}}})-((-1)^n\frac{Y_i^{p^n}}{p^{1+p+\cdots+p^{n-1}}})^p\big)\\
          = & \frac{1}{p}\big((-1)^nz_{i,0}^{p^n}p^{p^n-1-p-\cdots-p^{n-1}}-(-1)^{n}\frac{Y_i^{p^{n+1}}}{p^{p+p^2+\cdots+p^n}}\big)\\
          = & (-1)^nz_{i,0}^{p^n}p^{p^n-2-p-\cdots-p^{n-1}}+(-1)^{n+1}\frac{Y_i^{p^{n+1}}}{p^{1+p+\cdots+p^{n}}},
      \end{split}\]
      yielding that
      \[\delta((-1)^n\frac{Y_i^{p^n}}{p^{1+p+\cdots+p^{n-1}}})-(-1)^{n+1}\frac{Y_i^{p^{n+1}}}{p^{1+p+\cdots+p^{n}}}\in D.\]
      Therefore, if we put 
      \[z_{i,n} = \big(\delta^{n+1}(Y_i)- \delta((-1)^n\frac{Y_i^{p^n}}{p^{1+p+\cdots+p^{n-1}}}) \big)+\big(\delta((-1)^n\frac{Y_i^{p^n}}{p^{1+p+\cdots+p^{n-1}}})-(-1)^{n+1}\frac{Y_i^{p^{n+1}}}{p^{1+p+\cdots+p^{n}}}\big),\]
      then it is an element in $D[\Lambda_n] = D[\Lambda_n^{\prime}]$ such that
      \[\begin{split}&\delta^{n+1}(Y_i)=(-1)^{n+1}\frac{Y_i^{p^{n+1}}}{p^{1+p+\cdots+p^{n}}}+z_{i,n}.\end{split}\]
      Finally, we have
      \[\begin{split}D[\Lambda_{n+1}] =& D[\Lambda_n][\delta^{n+1}(Y_1),\dots,\delta^{n+1}(Y_d)]\\
      =&D[\Lambda_n^{\prime}][(-1)^{n+1}\frac{Y_1^{p^{n+1}}}{p^{1+p+\cdots+p^{n}}}+z_{1,n},\dots,(-1)^{n+1}\frac{Y_d^{p^{n+1}}}{p^{1+p+\cdots+p^{n}}}+z_{d,n}]\\
      =&D[\Lambda_n^{\prime}][\frac{Y_1^{p^{n+1}}}{p^{1+p+\cdots+p^{n}}},\dots,\frac{Y_d^{p^{n+1}}}{p^{1+p+\cdots+p^{n}}}]\\
      =&D[\Lambda_{n+1}^{\prime}].\end{split}\]
      So we see the lemma holds true for $n+1$ as desired.
    \end{proof}Thanks to the lemma, we have
    \[R_j[Y_1,\dots,Y_d]^{\wedge}_p = D[\Lambda^{\prime}]^{\wedge}_p = (\colim_nD[\Lambda_n^{\prime}])^{\wedge}_p = (\colim_nD[\Lambda_n])^{\wedge}_p=D[\Lambda]^{\wedge}_p = R_1\widehat \otimes_AR_2\{\frac{J}{p}\}^{\wedge}_p\]
    as desired. This completes the proof.
  \end{proof}

\begin{rmk}
    Compared with \cite[Lem. 5.2]{Tia23}, the $\delta$-structure on $R$ is not induced from the given chart. So, one cannot obtain Proposition \ref{prop:key} from the argument in loc.cit. directly.
\end{rmk}

\begin{rmk}\label{rmk:apply Ogus}
    Let us explain how to deduce Proposition \ref{prop:key} from \cite[Thm. 2.3.13(1)]{Ogu22} directly. In the rest of this remark, we follow the language and notation in \cite{Ogu22} and assume $A_0$ is \emph{reduced}. Consider the $\phi$-scheme $Y:=\Spf(R\widehat \otimes_AR)$ with the Frobenius structure induced by $\delta_1$ and $\delta_2$ as in Proposition \ref{prop:key}. Then the closed embedding $i:X =\Spec(R_0)\to \overline Y:=\Spec(R_0\widehat \otimes_{A_0}R_0)$ admits a lifting $j:\widetilde X=\Spf(R\widehat \otimes_{A[\underline T]}R)\to Y$ defined by the ideal $J =\Ker(\Delta:R\widehat \otimes_AR\to R)$. Under Assumption \ref{assumpsion:key}, it can be checked that $\widetilde X_1 = \widetilde X\times_{\Spf(A)}\Spec(A_1)$ is invariant under the Frobenius $\rF_{Y_1}$ of $Y_1=Y\times_{\Spf(A)}\Spec(A_1)$ as required in \cite[Thm. 2.3.13(1)]{Ogu22}. Therefore, using \cite[Thm. 2.3.13(1)]{Ogu22}, the prismatic envelope $\Prism_X(Y) = \Spf(R_1\widehat \otimes_{A}R_2\{\frac{J}{p}\}^{\wedge}_p)$ coincides with the pd-dilatation of the natural morphism $\widetilde X\to \bD_X(Y) = \Spf(R_1\widehat \otimes_AR_2[\frac{J}{p}]^{\wedge}_p) = \Spf(D)$ defined by the sequence $\frac{T_1-S_1}{p},\dots,\frac{T_d-S_d}{p}$, which is exactly $\bP\bD_X(\bD_X(Y)) = \Spf(R[Y_1,\dots,Y_d]^{\wedge}_{\pd})$ by \cite[Prop. 2.1.3(2)]{Ogu22} (It is where we need that $A_0$ is reduced).
\end{rmk}

   \begin{conv}\label{conv:different delta}
       Let $\delta_1$ and $\delta_2$ be two $\delta$-structures on $R$, $(R_1,(p))$ and $(R_2,(p))$ be the induced crystalline prisms, and $\underline T$ and $\underline S$ be the coordinates on $R_1$ and $R_2$, respectively. Let $(R_{12},(p))$ be the coproduct $(R_1,(p))\times(R_2,(p))$ in $(R_0/A)_{\Prism}$. When Assumption \ref{assumpsion:key} is true, we have 
       \[R_{12} =  R_1[\underline Y]_{\pd}^{\wedge} = R_2[\underline Y]^{\wedge}_{\pd}.\]
   \end{conv}

   \begin{rmk}\label{rmk:R12}
      Clearly, $(R_{12},(p))$ is a covering of $(R_1,(p))$ in $(R_0/A)_{\Prism}$. Denote by $(\widetilde R^{\bullet},(p))$ the \v Cech nerve associated to this covering; namely, for any $\bullet\geq 0$, $(\widetilde R^{\bullet},(p))$ is the fiber coproduct
      \[(R_{12},(p))\times_{(R_1,(p))}(R_{12},(p))\times_{(R_1,(p))}\cdots\times_{(R_1,(p))}(R_{12},(p))\]
      of $(\bullet+1)$-copies of $(R_{12},(p))$ over $(R_1,(p))$. By definition of $(R_{12},(p))$, for any $\bullet\geq 0$, $(\widetilde R^{\bullet},(p))$ is also the coproduct 
      \[(R_1,(p))\times(R_2,(p))\times\cdots\times(R_2,(p))\]
      of $(R_1,(p))$ with $(\bullet+1)$-copies of $(R_2,(p))$. Therefore, if $(R^{\bullet}_2,(p))$ stands for the \v Cech nerve associated to the covering $(R_2,(p))$ of the initial object of $(R_0/A)_{\Prism}$ (cf. Lemma \ref{lem:cover}), then there is a natural map of cosimplicial rings $R_2^{\bullet}\to \widetilde R^{\bullet}$, which gives rise to a functor 
      \[\Strat(R^{\bullet}_2)\to \Strat(\widetilde R^{\bullet}).\]
      We will see that, in the case that we are interested in, any stratification in $\Strat(\widetilde R^{\bullet})$ gives rise to an $R_1$-module. This allows us to relate an $R_2$-module (underlying a stratification with respect to $R_2^{\bullet}$) to an $R_1$-module, and finally helps us to compare some constructions corresponding to the different $\delta$-structures $\delta_1$ and $\delta_2$ on $R$.
  \end{rmk}

  \begin{cor}\label{cor:key}
      Suppose Assumption \ref{assumpsion:key} holds true and keep notations of Convention \ref{conv:different delta}. 
      \begin{enumerate}
          \item[(1)] For any $*\in \{1,2\}$, let $(R_{*}^{\bullet},(p))$ be the self-coproduct
          \[(R_*,(p))\times(R_*,(p))\times\cdots\times(R_*,(p))\]
          of $(\bullet+1)$-copies of $(R_*,(p))$ in $(R_0/A)_{\Prism}$. For any $0\leq j\leq \bullet$, let $\underline T_j$ (resp. $\underline S_j$) be the coordinate on the $(j+1)$-th component of $(R_1^{\bullet},(p))$ (resp. $(R_2^{\bullet},(p))$); for example, $\underline T_0$ represents $T_{0,1},\cdots, T_{0,d}$ and etc.. For any $1\leq i\leq \bullet$, set $\underline X_i = \frac{\underline T_0-\underline T_i}{p}$ (resp. $\underline Z_i = \frac{\underline S_0-\underline S_i}{p}$) and put $\underline T:=\underline T_0$ for short. 
          Then we have an isomorphism of cosimplicial rings
          \[R_1^{\bullet}\simeq R_1[\underline X_1,\dots,\underline X_{\bullet}]^{\wedge}_{\pd},\]
          where the $R_1$-linear structures on both sides are induced by the first component of $R_1^{\bullet}$, and the face maps $p_i$'s on $R_1[\underline X_1,\dots,\underline X_{\bullet}]^{\wedge}_{\pd}$ are induced by 
          \begin{equation}\label{equ:face-I}
              \begin{split}
                  &p_i(\underline T) = \left\{
                  \begin{array}{rcl}
                      \underline T-p\underline X_1, & i=0 \\
                      \underline T, & i\geq 1
                  \end{array}
                  \right.\\
                  &p_i(\underline X_j) = \left\{
                  \begin{array}{rcl}
                      \underline X_{j+1}-\underline X_1, & i=0 \\
                      \underline X_{j+1}, & 0<i\leq j\\
                      \underline X_j, & i>j
                  \end{array}
                  \right.
              \end{split}
          \end{equation}
          while the degeneracy maps $\sigma_i$'s are $R_1$-linear and induced by
          \begin{equation}\label{equ:degeneracy-I}
              \begin{split}
                  \sigma_i(\underline X_j) = \left\{
                  \begin{array}{rcl}
                      0, & (i,j) = (0,1) \\
                      \underline X_{j-1}, & i < j \text{ and }(i,j)\neq (0,1)\\
                      \underline X_j, & i\geq j.
                  \end{array}
                  \right.
              \end{split}
          \end{equation}
          The same result holds true for $R_2$ after replacing $\underline T$ and $\underline X_i$'s by $\underline S$ and $\underline Z_i$'s respectively.

          \item[(2)] Let $(\widetilde R^{\bullet},(p))$ be the coproduct 
          \[(R_1,(p))\times (R_2,(p))\times(R_2,(p))\times\cdots\times(R_2,(p))\]
          of $(R_1,(p))$ with $(\bullet+1)$-copies of $(R_2,(p))$ in $(R_0/A)_{\Prism}$. 
          For any $0\leq j\leq \bullet$, let $\underline S_j$ be the coordinate on the $(j+1)$-th $(R_2,(p))$ in $(\widetilde R^{\bullet},(p))$ and let $\underline T$ be the coordinate on $(R_1,(p))$. 
          For any $1\leq i\leq \bullet$, set $\underline Z_i = \frac{\underline S_0-\underline S_i}{p}$ and $\underline Y = \frac{\underline T_0-\underline S_0}{p}$. 
          Then we have an isomorphism of cosimplicial rings
          \[\widetilde R^{\bullet}\simeq R_{12}[\underline Z_1,\dots,\underline Z_{\bullet}]^{\wedge}_{\pd}\]
          where the face maps $p_i$'s on $R_{12}[ \underline Z_1,\dots,\underline Z_{\bullet}]^{\wedge}_{\pd} = R_{1}[ \underline Y, \underline Z_1,\dots,\underline Z_{\bullet}]^{\wedge}_{\pd}$ are $R_1$-linear and induced by
          \begin{equation}\label{equ:face-II}
              \begin{split}
                  &p_i(\underline Y) = \left\{
                  \begin{array}{rcl}
                      \underline Y+\underline Z_1, & i=0 \\
                      \underline Y, & i\geq 1
                  \end{array}
                  \right.\\
                  &p_i(\underline Z_j) = \left\{
                  \begin{array}{rcl}
                      \underline Z_{j+1}-\underline Z_1, & i=0 \\
                      \underline Z_{j+1}, & 0<i\leq j\\
                      \underline Z_j, & i>j
                  \end{array}
                  \right.
              \end{split}
          \end{equation}
          while the degeneracy maps $\sigma_i$'s are $R_{12}$-linear induced by \eqref{equ:degeneracy-I} for $\underline Z_j$'s instead of $\underline X_j$'s.
        \end{enumerate}
  \end{cor}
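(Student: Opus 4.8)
The plan is to bootstrap from the two-fold coproduct descriptions already in hand: Corollary \ref{cor:self product for two delta structure} (self-coproduct of $(R,(p),\delta)$ with itself) and Proposition \ref{prop:key} (the cross term $R_1\widehat\otimes_A R_2\{\tfrac Jp\}^\wedge_p \cong R_j[\underline Y]^\wedge_{\pd}$). For Item (1), I would first recall that the $(\bullet+1)$-fold self-coproduct in a site is computed iteratively: $R_1^\bullet = R_1 \times R_1 \times \cdots \times R_1$ can be built by repeatedly coproducting one more copy of $(R_1,(p))$, and each such step is, by Lemma \ref{lem:cover}, a prismatic envelope of the form $B\widehat\otimes_A R\{\tfrac Ip\}^\wedge_p$. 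Under Assumption \ref{assumpsion:key} each such envelope is the $p$-complete free pd-algebra on $d$ new variables, by Proposition \ref{prop:key} applied with the appropriate pair of $\delta$-structures (here both copies carry $\delta_1$, so Assumption \ref{assumpsion:key} trivially holds). Thus by induction on $\bullet$ one obtains $R_1^\bullet \cong R_1[\underline X_1,\dots,\underline X_\bullet]^\wedge_{\pd}$, where I set $\underline X_i := \tfrac{\underline T_0 - \underline T_i}{p}$; the only real content left is to identify the cosimplicial structure maps, i.e. to verify formulas (\ref{equ:face-I}) and (\ref{equ:degeneracy-I}).

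To pin down the face maps $p_i^\bullet: R_1^\bullet \to R_1^{\bullet+1}$, I would unwind the combinatorics: $p_i$ is induced by the order-preserving injection $[\bullet]\to[\bullet+1]$ omitting $i$, so on the level of coproduct factors it duplicates no factor but shifts indices past $i$. Concretely $p_i(\underline T_k) = \underline T_k$ for $k<i$ and $p_i(\underline T_k) = \underline T_{k+1}$ for $k\ge i$. Since $\underline X_j = \tfrac{\underline T_0 - \underline T_j}{p}$, one reads off $p_i(\underline X_j)$ by substituting: for $i\ge 1$ the index $0$ is fixed, so $p_i(\underline X_j) = \tfrac{\underline T_0 - p_i(\underline T_j)}{p}$, which equals $\underline X_{j+1}$ if $j\ge i$ and $\underline X_j$ if $j<i$; for $i=0$ we have $p_0(\underline T_0)=\underline T_1$, so the distinguished factor becomes the old factor $1$, and a short computation gives $p_0(\underline T) = \underline T - p\underline X_1$ and $p_0(\underline X_j) = \underline X_{j+1}-\underline X_1$. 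These match (\ref{equ:face-I}) exactly. The degeneracy maps $\sigma_i$ are induced by the surjection $[\bullet+1]\to[\bullet]$ collapsing $\{i,i+1\}$; on coproduct factors this identifies factor $i+1$ with factor $i$, i.e. $\sigma_i(\underline T_{i+1}) = \underline T_i$, whence $\sigma_i(\underline X_{i+1}) = 0$ for $i=0$ (since $\underline X_1 = \tfrac{\underline T_0-\underline T_1}{p} \mapsto 0$) and otherwise $\sigma_i$ shifts indices as in (\ref{equ:degeneracy-I}). One checks $\sigma_i$ is $R_1$-linear because the distinguished ($0$-th) factor is preserved. The verification that the natural map is a ring isomorphism (not just a bijection on generators) follows since both sides are $p$-complete, $p$-torsion free, and agree modulo $p$ by Construction \ref{const:p-dilatation} and Lemma \ref{lem:Structure of D}; alternatively it is forced by the universal property of the coproduct combined with Proposition \ref{prop:key}.

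For Item (2), the same inductive scheme applies, but now the first coproduct step is the \emph{cross} term $(R_1,(p))\times(R_2,(p)) = R_{12}$, to which Proposition \ref{prop:key} gives $R_{12} = R_1[\underline Y]^\wedge_{\pd}$ with $\underline Y = \tfrac{\underline T_0-\underline S_0}{p}$; the subsequent $\bullet$ coproduct steps each adjoin a copy of $(R_2,(p))$, contributing pd-variables $\underline Z_i = \tfrac{\underline S_0 - \underline S_i}{p}$ by Proposition \ref{prop:key} again (with the two $\delta_2$-copies, where Assumption \ref{assumpsion:key} is automatic). Hence $\widetilde R^\bullet \cong R_{12}[\underline Z_1,\dots,\underline Z_\bullet]^\wedge_{\pd} = R_1[\underline Y,\underline Z_1,\dots,\underline Z_\bullet]^\wedge_{\pd}$. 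The face-map formulas (\ref{equ:face-II}) are obtained by the identical index-chasing argument: the $(R_1,(p))$ factor carrying $\underline T$ is never duplicated, so the $p_i$'s are $R_1$-linear; $p_i$ acts on the $\underline S_k$'s exactly as it did on the $\underline T_k$'s in Item (1), giving $p_i(\underline Z_j)$ the same shifts; and for $i=0$, since $p_0(\underline S_0) = \underline S_1$, one computes $p_0(\underline Y) = \tfrac{\underline T_0 - \underline S_1}{p} = \underline Y + \underline Z_1$ and $p_0(\underline Z_j) = \underline Z_{j+1}-\underline Z_1$, as asserted. The degeneracies are $R_{12}$-linear and follow formula (\ref{equ:degeneracy-I}) with $\underline Z$ in place of $\underline X$ for the same reason as before.

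The main obstacle is purely bookkeeping: ensuring that the iterated-coproduct identification is compatible with the cosimplicial identities and that the generators $\underline X_i, \underline Y, \underline Z_i$ are indexed consistently across all the face and degeneracy maps — in particular getting the $i=0$ face map right, since that is the one that mixes the distinguished factor with the others and produces the nontrivial substitutions ($\underline T\mapsto \underline T - p\underline X_1$, $\underline Y \mapsto \underline Y+\underline Z_1$). There is no conceptual difficulty beyond Proposition \ref{prop:key}; the pd-completion and $p$-torsion-freeness are handled exactly as in the proof of that proposition and of Lemma \ref{lem:Structure of D}.
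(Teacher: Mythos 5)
Your proposal is correct and matches the paper's intent: the paper's proof of this corollary is simply the one-line remark that it "can be easily deduced from the same calculation in the proof of Proposition \ref{prop:key}," i.e. iterating the two-fold envelope computation and reading off the cosimplicial structure maps, which is exactly what you do (with the index-chasing for the face and degeneracy maps spelled out in more detail than the paper provides).
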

  \begin{proof}
      This can be easily deduced from the same calculation as in the proof of Proposition \ref{prop:key}.
  \end{proof}

\section{Prismatic crystal as $p$-connection: Local construction}\label{sec:local construction}
  Throughout this section, let $R_0$ be a smooth $A_0$-algebra with a fixed chart $\Box$, and let $R$ be a smooth lifting of $R_0$ to $A$ with the coordinates $\underline T$ induced from $\Box$ as in the previous section. For any $n\in\bN_*$, define $R_n = R/p^{n+1}R$ and $\frakX_{n}:=\Spf(R_n)$. We also equip $R$ with a $\delta$-structure so that $(R,(p))$ is a prism in $(R_0/A)_{\Prism}$.

\subsection{Prismatic crystal as stratifications}
  We start with some well-known results, which allow us to study prismatic crystals via certain stratifications.
  
  Denote by $(R^{\bullet},(p))$ the cosimplicial prism which represents the \v Cech nerve associated to the covering $(R,(p))$ of the final object of $\Sh((R_0/A)_{\Prism})$ (cf. Lemma \ref{lem:cover}). By Corollary \ref{cor:key}(1), we have an isomorphism of cosimplicial $R$-algebras 
  \[R^{\bullet} \cong R[\underline X_1,\dots,\underline X_{\bullet}]^{\wedge}_{pd}.\]
  \begin{lem}\label{lem:crystal vs stratification}
      For any $n\in \bN_*$, the evaluation along $(R_{\bullet},(p))$ induces an equivalence of categories
      \[\Vect((R_0/A)_{\Prism},\calO_{\Prism,n})\xrightarrow{\simeq}\Strat(R^{\bullet}/p^n) = \Strat(R[\underline X_1,\dots,\underline X_{\bullet}]^{\wedge}_{\pd}/p^n),\]
      which preserves ranks, tensor products and dualities.
  \end{lem}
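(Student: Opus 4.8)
The plan is to use the standard equivalence between crystals on a site and descent data (stratifications) with respect to a covering. First I would recall that $(R,(p))$ is a covering of the final object of $\Sh((R_0/A)_{\Prism})$ by Lemma \ref{lem:cover}, so that the \v Cech nerve $(R^\bullet,(p))$ computes the topos-theoretic descent. For a sheaf $\bM$ of $\calO_{\Prism,n}$-modules, evaluation gives a cosimplicial module $\bM(R^\bullet/p^n)$; the crystal condition (part (2) of Definition \ref{dfn:crystal}) says precisely that all the structure maps are obtained by base change, so $\bM$ is determined by $M := \bM(R/p^n)$ together with the base-change isomorphism over $R^1/p^n$, and the cosimplicial identities translate into the cocycle and normalization conditions of Definition \ref{dfn:stratification}. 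This is the general principle that a crystal is the same as a sheaf equipped with a stratification with respect to the \v Cech nerve of a cover; I would cite the relevant place in \cite{BS22} (e.g. the discussion of crystals as stratified modules) or give the short direct argument.

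The key steps, in order, are: (i) Given $\bM\in\Vect((R_0/A)_{\Prism},\calO_{\Prism,n})$, set $M=\bM(R/p^n)$, which is finite projective of rank $r$ over $R/p^n$ by part (1) of Definition \ref{dfn:crystal}; the two face maps $p_0,p_1\colon R/p^n\to R^1/p^n$ give two maps $M\otimes_{R/p^n,p_i}R^1/p^n\to\bM(R^1/p^n)$, each an isomorphism by the crystal condition, and composing one with the inverse of the other yields $\varepsilon\colon M\otimes_{p_0}R^1/p^n\xrightarrow{\cong}M\otimes_{p_1}R^1/p^n$. (ii) The simplicial identities $p_0 p_1 = p_1 p_0$, etc., among the face maps of $R^\bullet$, together with functoriality of $\bM$, give the cocycle condition $p_2^*\varepsilon\circ p_0^*\varepsilon = p_1^*\varepsilon$ over $R^2/p^n$, and the degeneracy $\sigma_0$ gives $\sigma_0^*\varepsilon=\id$. (iii) Conversely, from a stratification $(M,\varepsilon)$ one reconstructs a sheaf on $(R_0/A)_{\Prism}$ by the usual descent recipe: for $(B,(p))$ in the site, choose (locally) a map $(R,(p))\to(B,(p))$ — possible since $(R,(p))$ is a cover — and glue $M\otimes B/p^n$ along the two pullbacks via $\varepsilon$; the cocycle condition ensures the result is independent of choices and is a sheaf, and one checks it is a crystal of rank $r$. (iv) Finally, identify $R^\bullet$ with $R[\underline X_1,\dots,\underline X_\bullet]^\wedge_{\pd}$ via Corollary \ref{cor:key}(1), and note these functors are mutually inverse and manifestly compatible with $\otimes$, duals and ranks since they are all computed on the level of the finite projective module $M$ over $R/p^n$.

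The main obstacle is the reconstruction direction (step (iii)): one must verify that the sheaf built from a stratification is genuinely a sheaf on $(R_0/A)_{\Prism}$ and satisfies the crystal base-change property for \emph{arbitrary} arrows, not just those factoring compatibly through $R$. This is handled by the standard fact that for a cover $U$ of the final object in a topos, the category of sheaves (of modules over a sheaf of rings) is equivalent to the category of descent data with respect to the \v Cech nerve of $U$; here one additionally tracks the finite-projectivity and rank condition, which is local and hence passes through descent. I would also note the mild point that a map $(R,(p))\to(B,(p))$ need not exist globally but does after passing to a cover of $B$ (or one uses that any object admits a cover by objects receiving a map from $R$), which is exactly what Lemma \ref{lem:cover} provides. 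The preservation of tensor products, dualities and ranks is then immediate because all constructions are $R/p^n$-linear on the evaluation $M=\bM(R/p^n)$.
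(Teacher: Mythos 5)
Your proposal is correct and is exactly the standard \v Cech-descent argument underlying this equivalence; the paper itself simply cites \cite[Prop.~4.8]{Tia23} for this, and your steps (i)--(iv), including the care taken in step (iii) about maps $(R,(p))\to(B,(p))$ existing only after passing to a cover, are precisely what that cited proposition establishes. No gaps.
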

  \begin{proof}
      This follows from \cite[Prop. 4.8]{Tia23}
  \end{proof}
  \begin{cor}\label{cor:crystal vs stratification}
      Keep assumptions and notations of Corollary \ref{cor:key}. For any $\bM\in \Vect((R_0/A)_{\Prism},\calO_{\Prism,n})$, we have that $\bM(R_1,(p)) = \lim_{\Delta}\bM(\widetilde R^{\bullet},(p))$.
  \end{cor}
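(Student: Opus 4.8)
The statement is Corollary \ref{cor:crystal vs stratification}: for $\bM$ a prismatic crystal of truncation $n$, we want $\bM(R_1,(p)) = \lim_{\Delta}\bM(\widetilde R^{\bullet},(p))$, where $(\widetilde R^\bullet,(p))$ is the \v{C}ech nerve of the covering $(R_1,(p))\to(R_{12},(p))$ in $(R_0/A)_{\Prism}$ (cf. Remark \ref{rmk:R12}). The plan is purely formal: combine the crystal property of $\bM$ with faithfully flat (here: $p$-completely faithfully flat, in fact pd-polynomial) descent along this \v{C}ech nerve.

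\begin{proof}
    By Remark \ref{rmk:R12}, the map $(R_1,(p))\to(R_{12},(p))$ is a covering in $(R_0/A)_{\Prism}$ and $(\widetilde R^{\bullet},(p))$ is its associated \v{C}ech nerve. Since $\bM$ is a sheaf on $(R_0/A)_{\Prism}$ (cf. Definition \ref{dfn:crystal}), evaluating the sheaf axiom on this cover gives that $\bM(R_1,(p))\to \lim_{\Delta}\bM(\widetilde R^{\bullet},(p))$ is an isomorphism, i.e. $\bM(R_1,(p))$ is the equalizer of the two maps $\bM(\widetilde R^0,(p))\rightrightarrows \bM(\widetilde R^1,(p))$. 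Concretely, this uses that prismatic cohomology satisfies descent along covers in the prismatic topology; the cosimplicial module $\bM(\widetilde R^{\bullet},(p))$ has underlying terms $\bM(R_{12},(p))\otimes_{R_1,\dots}\cdots$ by the crystal property (Item (2) of Definition \ref{dfn:crystal}), since $\widetilde R^{\bullet}$ is the $\bullet$-fold fiber coproduct of $R_{12}$ over $R_1$; and the descent is effective because the relevant base change $R_{12}/p^{n+1}$ over $R_1/p^{n+1}$ is faithfully flat (indeed a $p$-completely free pd-polynomial extension, by Proposition \ref{prop:key} / Convention \ref{conv:different delta}), so finite projective modules satisfy descent and the limit of the cosimplicial module computes the desired evaluation in degree $0$.
\end{proof}

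The only point requiring a little care — and hence the main obstacle, though it is mild — is justifying that the sheaf condition applies to a cover that is not one of the ``standard'' covers $(R,(p))$ used in Lemma \ref{lem:crystal vs stratification}: here $(R_{12},(p))$ arises from two \emph{distinct} $\delta$-structures $\delta_1,\delta_2$ on $R$. This is exactly what Corollary \ref{cor:self product for two delta structure} and Remark \ref{rmk:R12} provide: $(R_{12},(p))$ is a genuine object of $(R_0/A)_{\Prism}$ covering $(R_1,(p))$, so the general sheaf axiom applies verbatim, and one does not need to re-prove effectivity of descent from scratch — it follows either from the same faithfully flat descent argument underlying Lemma \ref{lem:crystal vs stratification} (via \cite[Prop. 4.8]{Tia23}), or simply by noting that $\lim_\Delta$ of a cosimplicial diagram of finite projective modules arising from a faithfully flat cover recovers the global sections. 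I would therefore keep the proof to one or two sentences, citing Remark \ref{rmk:R12} for the \v{C}ech-nerve identification and the sheaf property of $\bM$, and pointing to Proposition \ref{prop:key} for the explicit pd-polynomial (hence faithfully flat) structure of the terms of $\widetilde R^\bullet$ that makes the descent effective.
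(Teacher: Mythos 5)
Your proof is correct and follows essentially the same route as the paper, which simply cites Remark \ref{rmk:R12} (for the \v{C}ech-nerve identification of $(\widetilde R^{\bullet},(p))$) together with \cite[Lem. 4.12]{Tia23} (which packages exactly the descent/sheaf-axiom step you spell out). Your additional remarks on the pd-polynomial, hence faithfully flat, structure of the terms of $\widetilde R^{\bullet}$ are a harmless elaboration of what that citation already provides.
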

  \begin{proof}
      As $(\widetilde R^{\bullet},(p))$ is the \v Cech nerve associated to the covering $(R_1,(p))\to (R_{12},(p))$ in $(R_0/A)_{\Prism}$ (cf. Remark \ref{rmk:R12}), this follows from \cite[Lem. 4.12]{Tia23}.
  \end{proof}

  Thanks to Lemma \ref{lem:crystal vs stratification}, in order to study prismatic crystals of truncation $n$, i.e. objects in $\Vect((R_0/A)_{\Prism},\calO_{\Prism,n})$, we only need to study $R$-modules with a stratification with respect to $R^{\bullet} = R[\underline X_1,\dots,\underline X_{\bullet}]^{\wedge}_{\pd}/p^{n+1}$.

\subsection{Prismatic crystals $p$-connections: Local equivalence}\label{ssec:local equivalence}
  We study $R$-modules with a stratification with respect to $R[\underline X_1,\dots,\underline X_{\bullet}]^{\wedge}_{\pd}/p^{n+1}$ in this section, following the strategy in \cite[\S5]{Tia23}. It is worth pointing out that the main result of this section, i.e. Proposition \ref{prop:local equivalence}, can be obtained by combining \cite[Prop. 4.8]{Tia23} and \cite[Prop. 2.9]{Shiho}. But for the further use, let us recall the details.
  
  Let $M$ be a finite projective $R/p^n$-module and $\varepsilon_M:M\otimes_{R,p_1}R^1\to M\otimes_{R,p_0}R^1$ be an $R^1$-linear isomorphism.
  Then one can write the restriction of $\varepsilon_M$ to $M$ as
  \[\varepsilon_M = \sum_{\underline n\in\bN^d}\theta_{\underline n}\underline X_1^{[\underline n]}\]
  with $\theta_{\underline n}\in\End_A(M)$ satisfying the following nilpotence condition:
  \begin{equation}\label{equ:nilpoteny}
      \lim_{|\underline n|\to+\infty}\theta_{\underline n} = 0.
  \end{equation}
  Here and in what follows, for any $\underline n=(n_1,\dots,n_d)\in\bN^d$, $\underline X_1^{[\underline n]} = X_{1,1}^{[n_1]}\cdots X_{d,1}^{[n_d]}$ and $|\underline n| = n_1+\dots+n_d$ (cf. \S\ref{sec:notations}).
  We remark that as $M$ is finite projective, the condition (\ref{equ:nilpoteny}) is equivalent to the condition that for any $x\in M$, we have
  \[\lim_{|\underline n|\to+\infty}\theta_{\underline n}(x) = 0.\]
  
  From \cite[Eq. (5.11.2)]{Tia23}, we have
    \begin{align*}
      &\begin{aligned}
          \bullet\quad p_2^*(\varepsilon_M)\circ p_0^*(\varepsilon_M) = {} & p_2^*(\varepsilon_M)\circ\sum_{\underline l\in\bN^d}\theta_{\underline l}(\underline X_2-\underline X_1)^{[\underline l]}\\
          = & \sum_{\underline l,\underline k\in\bN^d}\theta_{\underline k}\circ\theta_{\underline l}X_1^{[\underline k]}(\underline X_2-\underline X_1)^{[\underline l]}\\
          = & \sum_{\underline l,\underline k,\underline m\in\bN^d}(-1)^{|\underline l|}\theta_{\underline k}\circ\theta_{\underline l+\underline m}\underline X_1^{[\underline k]}\underline X_1^{[\underline l]}\underline X_2^{[\underline m]},
      \end{aligned}\\
      &\begin{aligned}
          \bullet\quad p_1^*(\varepsilon_M) = \sum_{\underline m\in\bN^d}\theta_{\underline m}X_2^{[\underline m]},
      \end{aligned}\\
      &\begin{aligned}
          \bullet \quad \sigma_0^*(\varepsilon_M) = \theta_{\underline 0}.
      \end{aligned}
  \end{align*}
  Therefore, we have the following result:
  \begin{lem}\label{lem:p_2p_0=p_1}
      The pair $(M,\varepsilon_M = \sum_{\underline n\in\bN^d}\theta_{\underline n}\underline X_1^{[\underline n]})$ lies in $\Strat(R[\underline X_1,\dots,\underline X_{\bullet}]^{\wedge}_{\pd}/p^n)$ if and only if the following conditions hold true for $(M,\varepsilon_M)$:
      \begin{enumerate}
          \item[(1)] $\theta_{\underline 0} = \id_M$,

          \item[(2)] For any $\underline m\in\bN^d$, we have
          \[\theta_{\underline m} = \sum_{\underline l,\underline k,\in\bN^d}(-1)^{|\underline l|}\theta_{\underline k}\circ\theta_{\underline l+\underline m}\underline X_1^{[\underline k]}\underline X_1^{[\underline l]}.\]

          \item[(3)] $\lim_{|\underline n|\to+\infty}\theta_{\underline n} = 0$.
      \end{enumerate}
  \end{lem}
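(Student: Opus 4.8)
The plan is to directly verify that the cocycle conditions of Definition~\ref{dfn:stratification} translate into the three bulleted conditions, using the explicit face and degeneracy maps on $R[\underline X_1,\dots,\underline X_\bullet]^\wedge_{\pd}/p^n$ recorded in Corollary~\ref{cor:key}(1). The input is the computation already displayed just before the statement: writing $\varepsilon_M|_M = \sum_{\underline n}\theta_{\underline n}\underline X_1^{[\underline n]}$, one has the formulas for $p_2^*(\varepsilon_M)\circ p_0^*(\varepsilon_M)$, for $p_1^*(\varepsilon_M)$, and for $\sigma_0^*(\varepsilon_M)$ as elements of $\End_A(M)$-valued power series in the divided-power variables on $R^2$ (resp. $R^1$). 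So the proof is really just reading off coefficients.

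First I would treat condition (2) of Definition~\ref{dfn:stratification}, the unit condition $\sigma_0^*(\varepsilon_M)=\id_M$: since $\sigma_0$ kills $\underline X_1$ (by the first line of~\eqref{equ:degeneracy-I} with $(i,j)=(0,1)$), applying $\sigma_0^*$ to $\varepsilon_M = \sum_{\underline n}\theta_{\underline n}\underline X_1^{[\underline n]}$ leaves only the $\underline n=\underline 0$ term, giving $\theta_{\underline 0}$; hence the unit condition is exactly (1), namely $\theta_{\underline 0}=\id_M$. Next I would treat condition (1) of Definition~\ref{dfn:stratification}, the cocycle identity $p_2^*(\varepsilon_M)\circ p_0^*(\varepsilon_M) = p_1^*(\varepsilon_M)$. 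Using the two displayed expansions, this is the equality in $\End_A(M)[[\underline X_1,\underline X_2]]^\wedge_{\pd}$
\[
\sum_{\underline l,\underline k,\underline m}(-1)^{|\underline l|}\,\theta_{\underline k}\circ\theta_{\underline l+\underline m}\,\underline X_1^{[\underline k]}\underline X_1^{[\underline l]}\underline X_2^{[\underline m]}
 = \sum_{\underline m}\theta_{\underline m}\,\underline X_2^{[\underline m]}.
\]
Comparing the coefficient of $\underline X_2^{[\underline m]}$ on both sides (the $\underline X_2$-monomials are linearly independent over the subring generated by the $\underline X_1$'s, since $R^2 = R[\underline X_1,\underline X_2]^\wedge_{\pd}$ is a free $p$-complete pd-polynomial ring) yields precisely condition (2) of the Lemma. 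Finally, condition (3) of the Lemma is just a restatement of the nilpotency~\eqref{equ:nilpoteny} that is built into the definition of the restriction of $\varepsilon_M$ (so that $\varepsilon_M$ makes sense as an $R^1$-linear map on the $p$-completed pd-polynomial ring), and conversely~\eqref{equ:nilpoteny} guarantees all the displayed infinite sums converge $p$-adically, so that the formal manipulations above are legitimate; this equivalence was already noted right after~\eqref{equ:nilpoteny} using finite projectivity of $M$.

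The only genuine point requiring care — the ``main obstacle'', such as it is — is the justification that coefficient comparison is valid: one must know that $\{\underline X_1^{[\underline k]}\underline X_1^{[\underline l]}\underline X_2^{[\underline m]}\}$, after using $\underline X_1^{[\underline k]}\underline X_1^{[\underline l]} = \binom{\underline k+\underline l}{\underline k}\underline X_1^{[\underline k+\underline l]}$, still separates the $\underline X_2$-degrees, and that all series involved lie in the $p$-adically completed pd-algebra so that reindexing $\underline l\mapsto \underline l+\underline m$ and swapping sums are legal. Both follow from the structure of $R^1$ and $R^2$ as $p$-complete free pd-polynomial rings over $R/p^n$ given by Corollary~\ref{cor:key}(1) together with the convergence~\eqref{equ:nilpoteny}; so after setting up that bookkeeping the proof is immediate, and one simply remarks that it is the same computation as in \cite[Eq.~(5.11.2)]{Tia23}.
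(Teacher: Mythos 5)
Your proof is correct and is essentially the paper's argument: the paper simply records the three displayed expansions of $p_2^*(\varepsilon_M)\circ p_0^*(\varepsilon_M)$, $p_1^*(\varepsilon_M)$ and $\sigma_0^*(\varepsilon_M)$ and reads off the lemma by comparing coefficients of $\underline X_2^{[\underline m]}$ (for the cocycle identity) and by the vanishing of $\underline X_1$ under $\sigma_0$ (for the unit condition), with (3) being the convergence condition already imposed on $\varepsilon_M$. Your extra remark justifying the coefficient comparison via the freeness of the $p$-complete pd-polynomial ring $R^2=R[\underline X_1,\underline X_2]^{\wedge}_{\pd}$ is exactly the right bookkeeping.
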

  The next lemma gives an equivalent description of Condition (2) in Lemma \ref{lem:p_2p_0=p_1}.
  \begin{lem}\label{lem:solve p_2p_0=p_1}
      For any $1\leq i\leq d$, let $\phi_i = \theta_{\underline 1_i}$. Assume $\theta_{\underline 0} = \id_M$. Then the following conditions are equivalent:
      \begin{enumerate}
          \item[(1)] For any $\underline m\in\bN^d$, we have $\theta_{\underline m} = \sum_{\underline l,\underline k,\in\bN^d}(-1)^{|\underline l|}\theta_{\underline k}\circ\theta_{\underline l+\underline m}\underline X_1^{[\underline k]}\underline X_1^{[\underline l]}$.

          \item[(2)] The $\phi_i$ commutes with $\phi_j$ for any $1\leq i,j\leq d$ such that for any $\underline m\in\bN^d$, we have $\theta_{\underline m} = \underline \phi^{\underline m}$.
      \end{enumerate}
  \end{lem}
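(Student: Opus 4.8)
The plan is to work inside the $p$-complete divided-power ring $R[\underline X_1]^{\wedge}_{\pd}/p^n = R^1/p^n$ and to compare coefficients. I will use that this ring is (topologically) free over $R/p^n$ on the divided-power monomials $\underline X_1^{[\underline n]}$, $\underline n\in\bN^d$, and that $\underline X_1^{[\underline k]}\underline X_1^{[\underline l]}=\binom{\underline k+\underline l}{\underline k}\,\underline X_1^{[\underline k+\underline l]}$, where $\binom{\underline k+\underline l}{\underline k}:=\prod_{i=1}^d\binom{k_i+l_i}{k_i}$. First I would rewrite the right-hand side of Condition (1) in this basis; together with $\theta_{\underline 0}=\id_M$ this shows that Condition (1) is equivalent to the family of coefficientwise identities
\[
\sum_{\underline k+\underline l=\underline n}(-1)^{|\underline l|}\binom{\underline n}{\underline k}\,\theta_{\underline k}\circ\theta_{\underline l+\underline m}=0,\qquad \underline m\in\bN^d,\ \underline n\neq\underline 0 .
\]

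For the implication $(1)\Rightarrow(2)$, I would first specialize this to $\underline n=\underline 1_i$: only the pairs $(\underline k,\underline l)\in\{(\underline 1_i,\underline 0),(\underline 0,\underline 1_i)\}$ contribute, and the identity collapses to the recursion $\theta_{\underline m+\underline 1_i}=\phi_i\circ\theta_{\underline m}$, valid for all $\underline m\in\bN^d$ and all $1\le i\le d$. Feeding this recursion into itself with $\underline m=\underline 1_j$ and using the commutativity of addition in $\bN^d$ gives $\phi_i\circ\phi_j=\theta_{\underline 1_i+\underline 1_j}=\phi_j\circ\phi_i$, so the $\phi_i$ pairwise commute; then an immediate induction on $|\underline m|$, with base case $\theta_{\underline 0}=\id_M$, yields $\theta_{\underline m}=\underline\phi^{\underline m}$. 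This is precisely Condition (2).

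For $(2)\Rightarrow(1)$, I would substitute $\theta_{\underline k}=\underline\phi^{\underline k}$ and $\theta_{\underline l+\underline m}=\underline\phi^{\underline l}\circ\underline\phi^{\underline m}$ (legitimate because the $\phi_i$ commute) into the right-hand side of Condition (1) and regroup according to $\underline n=\underline k+\underline l$, obtaining
\[
\underline\phi^{\underline m}\circ\sum_{\underline n\in\bN^d}\Big(\sum_{\underline k+\underline l=\underline n}(-1)^{|\underline l|}\binom{\underline n}{\underline k}\Big)\,\underline\phi^{\underline n}\,\underline X_1^{[\underline n]}.
\]
The inner sum factors over the $d$ coordinates as $\prod_{i=1}^d(1-1)^{n_i}$, which vanishes unless $\underline n=\underline 0$, in which case it equals $1$; hence the whole expression collapses to $\underline\phi^{\underline m}=\theta_{\underline m}$, which is Condition (1). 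Convergence of all the infinite sums is guaranteed by the nilpotency condition (\ref{equ:nilpoteny}) together with the $p$-completeness of $R^1/p^n$, while the coefficient comparisons themselves only ever involve finitely many terms.

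The computation is essentially bookkeeping with divided powers, so I do not expect a serious obstacle; the one step I would write out with care is the initial reduction of Condition (1) to the coefficientwise identities, which relies on the freeness of $R[\underline X_1]^{\wedge}_{\pd}/p^n$ over $R/p^n$ on the monomials $\underline X_1^{[\underline n]}$ and on the divided-power multiplication rule.
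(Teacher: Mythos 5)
Your proof is correct and follows essentially the same route as the paper: extract the recursion $\theta_{\underline m+\underline 1_i}=\phi_i\circ\theta_{\underline m}$ from the coefficient of $X_{i,1}$, deduce commutativity and $\theta_{\underline m}=\underline\phi^{\underline m}$ by iteration, and for the converse regroup by $\underline n=\underline k+\underline l$ and use the vanishing of $\sum_{\underline k+\underline l=\underline n}(-1)^{|\underline l|}\binom{\underline n}{\underline k}$ for $\underline n\neq\underline 0$ (the paper phrases this as $(\phi X-\phi X)^{h}=0$). Your explicit preliminary reduction to coefficientwise identities via the freeness of the divided-power algebra is a slightly more systematic presentation of what the paper does implicitly, but it is the same argument.
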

  \begin{proof}
      For $(1)\Rightarrow(2)$: We will prove the result by comparing the coefficients of $\underline X_1^{[\underline h]}$ for certain $\underline h\in \bN^d$ on both sides of 
      \[\begin{split}\theta_{\underline m} &= \sum_{\underline l,\underline k,\in\bN^d}(-1)^{|\underline l|}\theta_{\underline k}\circ\theta_{\underline l+\underline m}\underline X_1^{[\underline k]}\underline X_1^{[\underline l]}\\
      &=\sum_{\underline l=(l_1,\dots,l_d),\underline k=(k_1,\dots,k_d),\in\bN^d}(-1)^{|\underline l|}\theta_{\underline k}\circ\theta_{\underline l+\underline m}X_{1,1}^{[k_1]}\cdots X_{1,d}^{[k_d]}\cdot X_{1,1}^{[l_1]}\cdots X_{1,d}^{[l_d]}.\end{split}\]
      For any $1\leq i\leq d$, consider the coefficients of $X_{1,i}$, then we have $0 = \phi_i\circ\theta_{\underline m}-\theta_{\underline m+\underline 1_i}$; that is, for any $\underline m\in\bN^d$, we have
      \[\theta_{\underline m+\underline 1_i} = \phi_i\circ\theta_{\underline m}.\]
      In particular, for any $1\leq i,j\leq d$, we have
      \[\phi_i\circ\phi_j = \theta_{\underline 1_i+\underline 1_j} = \phi_j\circ\phi_i.\]
      Using this, by iteration, for any $\underline m = (m_1,\dots,m_d)\in \bN^d$, we have $\theta_{\underline m} = \underline \phi^{\underline m} := \phi_1^{m_1}\cdots\phi_d^{m_d}$ as desired.

      For $(2)\Rightarrow(1)$: In this case, we have
      \begin{equation*}
          \begin{split}
              \sum_{\underline l,\underline k,\in\bN^d}(-1)^{|\underline l|}\theta_{\underline k}\circ\theta_{\underline l+\underline m}\underline X_1^{[\underline k]}\underline X_1^{[\underline l]} = & \sum_{\underline l,\underline k,\in\bN^d}(-1)^{|\underline l|}\underline \phi^{\underline k}\circ\underline \phi^{\underline l+\underline m}\underline X_1^{[\underline k]}\underline X_1^{[\underline l]}\\
              = &\underline \phi^{\underline m}\sum_{\underline h\in\bN^d}\sum_{\underline k+\underline l=\underline h}(-1)^{|\underline l|}\underline \phi^{\underline k}\circ\underline \phi^{\underline l}\underline X_1^{[\underline k]}\underline X_1^{[\underline l]}\\
              = & \theta_{\underline m}
          \end{split}
      \end{equation*}
      as desired, where the last equality follows from the identity 
      \[\sum_{k+l = h}(-1)^l\phi^k\phi^lX^{[k]}X^{[l]} = (\phi X-\phi X)^h = 0\]
      when $h \geq 1$.
  \end{proof}
  \begin{cor}\label{cor:stratification}
      The following are equivalent:
      \begin{enumerate}
          \item[(1)] The pair $(M,\varepsilon_M)$ is an object in the category $\Strat(R[\underline X_1,\dots,\underline X_d]^{\wedge}_{\pd}/p^{n+1})$.

          \item[(2)] There are endomorphisms $\phi_1,\dots,\phi_d\in \End_A(M)$ which are topologically quasi-nilpotent and commute with each other such that
          \[\varepsilon_M = \sum_{\underline m\in\bN^d}\underline \phi^{\underline m}\underline X_1^{[\underline m]} = \exp(\sum_{i=1}^d\phi_iX_{i,1}).\]
      \end{enumerate}
  \end{cor}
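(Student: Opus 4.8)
The plan is to combine Lemma~\ref{lem:p_2p_0=p_1} and Lemma~\ref{lem:solve p_2p_0=p_1}, which together already reduce the stratification condition to data encoded by the operators $\theta_{\underline n}$. Concretely, I would argue as follows. First, recall that any $R^1$-linear isomorphism $\varepsilon_M\colon M\otimes_{R,p_1}R^1\to M\otimes_{R,p_0}R^1$ can, after restricting to $M$, be written uniquely as $\varepsilon_M=\sum_{\underline n\in\bN^d}\theta_{\underline n}\underline X_1^{[\underline n]}$ with $\theta_{\underline n}\in\End_A(M)$, because $R^1=R[\underline X_1]^{\wedge}_{\pd}/p^n$ is the $p$-complete free pd-algebra on the $\underline X_1$'s over $R$, so the monomials $\underline X_1^{[\underline n]}$ form a topological $R$-basis; the nilpotency condition $\lim_{|\underline n|\to\infty}\theta_{\underline n}=0$ is exactly what makes this sum converge $p$-adically and is automatic for a well-defined map, while conversely any such sum defines an $R^1$-linear map. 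Thus there is a bijection between "candidate stratifications on $M$" and families $(\theta_{\underline n})$ satisfying the nilpotency condition.

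Next, I would invoke Lemma~\ref{lem:p_2p_0=p_1}: the pair $(M,\varepsilon_M)$ satisfies the cocycle condition, i.e.\ lies in $\Strat(R[\underline X_1,\dots,\underline X_\bullet]^{\wedge}_{\pd}/p^n)$, precisely when $\theta_{\underline 0}=\id_M$, the recursion in item~(2) of that lemma holds, and $\lim_{|\underline n|\to\infty}\theta_{\underline n}=0$. Then apply Lemma~\ref{lem:solve p_2p_0=p_1}, whose hypothesis $\theta_{\underline 0}=\id_M$ is now in force: the recursion is equivalent to the statement that the operators $\phi_i:=\theta_{\underline 1_i}$ pairwise commute and that $\theta_{\underline m}=\underline\phi^{\underline m}$ for every $\underline m$. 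Feeding this back into the expansion of $\varepsilon_M$ gives
\[
\varepsilon_M=\sum_{\underline m\in\bN^d}\underline\phi^{\underline m}\underline X_1^{[\underline m]}
=\prod_{i=1}^d\Big(\sum_{m_i\ge 0}\phi_i^{m_i}X_{i,1}^{[m_i]}\Big)
=\exp\Big(\sum_{i=1}^d\phi_i X_{i,1}\Big),
\]
where the last identity uses that $X_{i,1}^{[m_i]}=X_{i,1}^{m_i}/m_i!$ in the pd-sense and that the $\phi_i$ commute so the product of the one-variable exponential series is the exponential of the sum. Finally, the nilpotency $\lim_{|\underline n|\to\infty}\theta_{\underline n}=0$ translates, via $\theta_{\underline m}=\underline\phi^{\underline m}$, into the condition that the $\phi_i$ are (jointly) topologically quasi-nilpotent, i.e.\ for each $x\in M$ one has $\underline\phi^{\underline m}(x)\to 0$ as $|\underline m|\to\infty$; this is exactly the meaning of "topologically quasi-nilpotent" used elsewhere in the paper. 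Conversely, given commuting topologically quasi-nilpotent $\phi_i$, setting $\theta_{\underline m}=\underline\phi^{\underline m}$ produces an $\varepsilon_M$ satisfying all three conditions of Lemma~\ref{lem:p_2p_0=p_1}, hence a genuine stratification. This establishes the equivalence of (1) and (2).

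The only mildly delicate point — and the step I expect to take the most care — is the interchange between the pd-structure and the exponential/multiplicativity: one must check that $\sum_{\underline m}\underline\phi^{\underline m}\underline X_1^{[\underline m]}$ really does factor as the product $\prod_i\exp(\phi_i X_{i,1})$ inside the pd-completed polynomial ring modulo $p^n$, which amounts to the pd-identity $\underline X_1^{[\underline m]}=\prod_i X_{i,1}^{[m_i]}$ together with convergence of each factor (guaranteed by topological quasi-nilpotence of $\phi_i$). Since $R[\underline X_1]^{\wedge}_{\pd}/p^n$ contains the $\underline X_1^{[\underline m]}$ as an honest topological basis, there is no divided-power subtlety beyond bookkeeping, so everything goes through; the heavy lifting has already been done in the two preceding lemmas, and this corollary is essentially their assembly.
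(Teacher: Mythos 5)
Your proposal is correct and matches the paper's own (one-line) proof, which simply combines Lemma \ref{lem:p_2p_0=p_1} with Lemma \ref{lem:solve p_2p_0=p_1}; your write-up just makes the assembly explicit, and the pd-identity $\underline X_1^{[\underline m]}=\prod_i X_{i,1}^{[m_i]}$ you worry about is in fact the paper's notational convention from \S\ref{sec:notations}, so there is nothing further to check.
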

  \begin{proof}
      It is easy by Lemma \ref{lem:p_2p_0=p_1} combined with Lemma \ref{lem:solve p_2p_0=p_1}.
  \end{proof}
  As in \cite[\S5.10]{Tia23}, we can give an intrinsic construction of $\phi_i$'s.
  \begin{const}\label{const:intrinsic}
      Let $J_{\pd}$ be the kernel of the degeneracy map $\sigma_0:R^1\to R$. Via the isomorphism in Corollary \ref{cor:key}, we see that $J_{\pd} = (X_{1,1},\dots,X_{1,d})_{\pd}$ is the $p$-complete pd-ideal of $R[\underline X_1]^{\wedge}_{\pd}$ generated by $\underline X_1 = \frac{\underline T_0-\underline T_1}{p}$. Let $J_{\pd}^{[2]}$ be the $p$-complete pd-square of $J_{\pd}$ and $J = \Ker(R\widehat \otimes_AR\to R)$ be the ideal as in the very beginning of \S\ref{sec:key proposition}. Then the natural inclusion $R\widehat \otimes_AR\to R^{1}$ induces a natural morphism $J^*\to J_{\pd}^{[*]}$ by identifying $\underline T_0-\underline T_1$ with $p\underline X_1$ for $*\in\{1,2\}$. So we get a canonical map
      \begin{equation}\label{equ:intrinsicI}
          \oplus_{i=1}^dR\cdot pX_{1,i} = J/J^2\hookrightarrow J_{\pd}/J_{\pd}^{[2]} = \oplus_{i=1}^dR\cdot X_{1,i}.
      \end{equation}
     As $J/J^2 = \widehat \Omega^1_{R/A}$, we obtain a canonical isomorphism 
      \begin{equation}\label{equ:intrinsicII}
     J_{\pd}/J_{\pd}^{[2]}\xrightarrow{\cong}\widehat \Omega^1_{R/A}\cdot p^{-1}=:\widehat \Omega^1_{R/A}\{-1\},
      \end{equation}
      by identifying $X_{1,i}$'s with $\rd T_i\{-1\} = \frac{\rd T_i}{p}$.
      For any $(M,\varepsilon_M)\in \Strat(R^{\bullet})$, let $\iota_M:M\to M\otimes_{R,p_1}R^1$ be the map sending $x\to x\otimes 1$. As $\sigma_0^*(\varepsilon_M) = \id_M$, the difference $\varepsilon_M-\iota_M:M\to M\otimes_{R,p_1}R^1$ takes values in $M\otimes_{R,p_1}J_{\pd}$, and thus we have a canonical morphism
      \[\nabla_M:=\overline{\iota_M-\varepsilon_M}:M\to M\otimes_{R,p_1}J_{\pd}/J_{\pd}^{[2]}\cong M\otimes_R\widehat \Omega^1_{R/A}\{-1\}.\]
      By Corollary \ref{cor:stratification}(2), we conclude that 
      \[\nabla_M = -\sum_{i=1}^d\phi_i\otimes\frac{\rd T_i}{p}.\]
  \end{const}
  \begin{dfn}\label{dfn:p-connection}
      For any $n\in\bN_*$, let $\frakX_n$ be a formally smooth formal scheme over $A_n$. By a \emph{$p$-connection} on $\frakX_n$, we mean a pair $(\calM,\nabla_{\calM})$ of a vector bundle $\calM$ on $\frakX_{n,\et}$ and an $A_n$-linear morphism $\nabla_{\calM}:\calM\to \calM\otimes_{\calO_{\frakX_n}}\widehat \Omega_{\frakX_n/A_n}^1\{-1\}$ satisfying the following \emph{Leibniz rule}: 
      
      For any local sections $f\in \calO_{\frakX_n}$ and $x\in \calM$, we have
      \[\nabla_{\calM}(fx) = f\nabla_{\calM}(x)+p\rd(f)\otimes x,\]
      where we regard $\rd(f)$ as an element in $\widehat \Omega^1_{R_n/A_n}\{-1\}$ and thus it is of the form $\rd(f) = \sum_{i=1}^d\frac{\partial f}{\partial T_i}\otimes\frac{\rd T_i}{p}$.

      We say a $p$-connection is \emph{flat}, if $\nabla_{\calM}\wedge\nabla_{\calM} = 0$. A flat $p$-connection is called \emph{topologically quasi-nilpotent}, if $\lim_{n\to+\infty}\nabla_{
          \calM}^n = 0$; that is, when $\frakX_n$ admits a chart with the induced coordinates $\underline T$, if we write $\nabla_{\calM} = \sum_{i=1}^d\nabla_{i}\otimes\frac{\rd T_i}{p}$, then we have $\lim_{|\underline n|\to+\infty}\underline \nabla^{\underline n}= 0$. This condition is independent of the choice of charts on $\frakX_n$ (cf. \cite[Lem. 1.6]{Shiho})
      If $(\calM,\nabla_{\calM})$ is flat, we denote by $\rD\rR(\calM,\nabla_{\calM})$ the induced \emph{de Rham complex}: 
      \[\calM\xrightarrow{\nabla_{\calM}}\calM\otimes\widehat \Omega^1_{\frakX_n/A_n}\{-1\}\xrightarrow{\nabla_{\calM}}\calM\otimes\widehat \Omega^2_{\frakX_n/A_n}\{-2\}\xrightarrow{\nabla_{\calM}}\cdots\xrightarrow{\nabla_{\calM}}\calM\otimes\widehat \Omega^d_{\frakX_n/A_n}\{-d\}.\]
      Denote by $\MIC_p^{\rm tqn}(\frakX_n)$ the category of topologically quasi-nilpotent flat $p$-connections on $\frakX_n$. When $n = 0$, the $\MIC_p^{\rm tqn}(\frakX_0)$ coincides with $\HIG^{\nil}(\frakX_0)$, the category of nilpotent Higgs bundles on $\frakX_0$.
  \end{dfn}

  \begin{lem}\label{lem:stratification vs p-connection}
      For any $(M,\varepsilon_M)\in\Strat(R^{\bullet})$, the pair $(M,\nabla_M)$ as described above defines a topologically quasi-nilpotent flat $p$-connection on $\frakX_n$.
  \end{lem}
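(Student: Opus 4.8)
The plan is to unwind the content of Lemma~\ref{lem:stratification vs p-connection} by tracking what Construction~\ref{const:intrinsic} produced. Recall from Corollary~\ref{cor:stratification} that any $(M,\varepsilon_M)\in\Strat(R^{\bullet})$ is of the form $\varepsilon_M=\exp(\sum_{i=1}^d\phi_iX_{i,1})$ for commuting topologically quasi-nilpotent $\phi_i\in\End_A(M)$, and that Construction~\ref{const:intrinsic} gives $\nabla_M=-\sum_{i=1}^d\phi_i\otimes\frac{\rd T_i}{p}$ under the identification $J_{\pd}/J_{\pd}^{[2]}\cong\widehat\Omega^1_{R/A}\{-1\}$. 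So there are three things to check: that $\nabla_M$ is well-defined independently of the chart (hence glues to a map on $\frakX_{n,\et}$), that it satisfies the Leibniz rule of Definition~\ref{dfn:p-connection}, and that it is a flat, topologically quasi-nilpotent $p$-connection.

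First I would address the Leibniz rule. The cleanest route is to observe that $\varepsilon_M$ is, by the stratification axioms, $p_1^*\calO$-semilinear in the appropriate sense: the target $M\otimes_{R,p_0}R^1$ and source $M\otimes_{R,p_1}R^1$ have different $R$-structures, and $p_0(T_i)=T_i-pX_{i,1}$ while $p_1(T_i)=T_i$ on $R^1=R[\underline X_1]^{\wedge}_{\pd}/p^n$ (from the face maps in Corollary~\ref{cor:key}(1)). Writing $\iota_M(fx)=f\otimes x$ in the $p_1$-structure and $\varepsilon_M(fx)$ using that $\varepsilon_M$ is $R^1$-linear with $R$ acting through $p_0$, one computes $(\iota_M-\varepsilon_M)(fx)\bmod J_{\pd}^{[2]}$ and finds the extra term $p\,\rd(f)\otimes x$ coming from $p_0(f)-p_1(f)\equiv -p\sum_i\frac{\partial f}{\partial T_i}X_{i,1}\bmod J_{\pd}^{[2]}$ (a first-order Taylor expansion in the pd-variables, valid since $pX_{i,1}=T_i-S_i$). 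This is precisely the Leibniz rule with the factor $p$ accounted for by the $\{-1\}$-twist, and it also shows $\nabla_M$ is $A_n$-linear.

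Next, flatness: since $\nabla_M=-\sum_i\phi_i\otimes\frac{\rd T_i}{p}$ with the $\phi_i$ pairwise commuting (Corollary~\ref{cor:stratification}(2)), the curvature $\nabla_M\wedge\nabla_M=\sum_{i<j}[\phi_i,\phi_j]\otimes\frac{\rd T_i}{p}\wedge\frac{\rd T_j}{p}=0$; one should note that the Leibniz sign conventions make $\nabla_M\circ\nabla_M$ exactly this commutator sum, so flatness is immediate from commutativity. Topological quasi-nilpotence is likewise immediate: Definition~\ref{dfn:p-connection} asks $\lim_{|\underline n|\to\infty}\underline\nabla^{\underline n}=0$ where $\nabla_i=-\phi_i$, and this is exactly the condition~(\ref{equ:nilpoteny}) (equivalently $\lim\theta_{\underline n}=0$) already built into the stratification, via $\theta_{\underline n}=\underline\phi^{\underline n}$. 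Finally, chart-independence of the resulting object on $\frakX_{n,\et}$: the map $\nabla_M$ was defined in Construction~\ref{const:intrinsic} purely in terms of $\varepsilon_M$, $\iota_M$, and the intrinsic quotient $J_{\pd}/J_{\pd}^{[2]}$, with the identification~(\ref{equ:intrinsicII}) being the canonical one $J/J^2=\widehat\Omega^1_{R/A}$; so no choice of chart enters the construction, and by \cite[Lem.~1.6]{Shiho} the topo-qnil condition is chart-independent as well.

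I expect the main obstacle to be bookkeeping the semilinearity carefully in the Leibniz computation --- in particular getting the sign and the power of $p$ right --- since the two tensor factors $M\otimes_{R,p_0}R^1$ and $M\otimes_{R,p_1}R^1$ carry genuinely different $R$-module structures and one must expand $p_0(f)$ to first order in the divided-power variables $X_{i,1}$ modulo $J_{\pd}^{[2]}$. Once that expansion is in hand, everything else is formal given Corollary~\ref{cor:stratification}.
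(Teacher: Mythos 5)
Your proposal is correct and follows essentially the same route as the paper: flatness and topological quasi-nilpotence are read off from Corollary \ref{cor:stratification} (commutativity and the limit condition on the $\phi_i$), and the Leibniz rule is verified via the intrinsic formula $\nabla_M=\overline{\iota_M-\varepsilon_M}$ by splitting $\iota_M(fx)-\varepsilon_M(fx)$ into $(p_1(f)-p_0(f))\iota_M(x)+p_0(f)(\iota_M(x)-\varepsilon_M(x))$ and expanding $p_1(f)-p_0(f)\equiv p\,\rd(f)$ modulo $J_{\pd}^{[2]}$ under the identifications (\ref{equ:intrinsicI})--(\ref{equ:intrinsicII}). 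The only difference is that you additionally spell out $A_n$-linearity and chart-independence, which the paper leaves implicit in the intrinsic nature of Construction \ref{const:intrinsic}.
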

  \begin{proof}
      It is topologically quasi-nilpotent and flat by Lemma \ref{lem:p_2p_0=p_1}(3) and Lemma \ref{lem:solve p_2p_0=p_1}, respectively. It suffices to show that $\nabla_M$ satisfies the Leibniz rule in Definition \ref{dfn:p-connection}. To do so, we use the intrinsic construction of $\nabla_M$ in Construction \ref{const:intrinsic}. Fix an $f\in R$ and an $x\in M$, then we have
      \[\begin{split}
      \nabla_M(fx) = & \overline{\iota_M(fx)-\varepsilon_M(fx)}\\
      = & \overline{p_1(f)\iota_M(x)-p_0(f)\varepsilon_M(x)}\\
      = &\overline{(p_1(f)-p_0(f))\iota_M(x)+p_0(f)(\iota_M(f)-\varepsilon_M(x))}\\
      = & \overline{p_1(f)-p_0(f)}\otimes x+f\nabla_M(x)\\
      = & p\rd(f)\otimes x+f\nabla_M(x),
      \end{split}\]
      as desired, where the last equality follows as $\overline{p_1(f)-p_0(f)}$ coincides with the $\rd(f)\in J/J^2\cong \widehat \Omega_{R_n/A_n}^1$ and hence with $p\rd(f)\in J_{\pd}/J_{\pd}^{[2]}\cong \Omega_{R_n/A_n}^1\{-1\}$ by (\ref{equ:intrinsicI}) and (\ref{equ:intrinsicII}).
  \end{proof}
  Now, we have the main result in this section, which is the local version of Theorem \ref{thm:main}
  \begin{prop}\label{prop:local equivalence}
      Let $\frakX = \Spec(R_0)$ be a smooth scheme over $A_0$ of dimension $d$ with a fixed chart. Let $\frakX = \Spf(R)$ be a smooth lifting of $\frakX_0$ over $A$ with $\frakX_n = \Spec(R_n)$. Fix a $\delta$-structure on $R$. Then there exists an equivalence of categories
      \[\Vect((\frakX/A)_{\Prism},\calO_{\Prism,n})\simeq \MIC^{\rm tqn}_p(\frakX_n)\]
      for any $n\in\bN_*$, which preserves ranks, tensor products and dualitis.
  \end{prop}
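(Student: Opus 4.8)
The strategy is to assemble the equivalence out of the pieces developed in this section. By Lemma~\ref{lem:crystal vs stratification}, evaluation along the \v{C}ech nerve gives an equivalence
\[
\Vect((\frakX_0/A)_{\Prism},\calO_{\Prism,n})\xrightarrow{\ \simeq\ }\Strat\big(R[\underline X_1,\dots,\underline X_{\bullet}]^{\wedge}_{\pd}/p^n\big),
\]
which respects ranks, tensor products and dualities. It therefore suffices to upgrade Construction~\ref{const:intrinsic}, Corollary~\ref{cor:stratification} and Lemma~\ref{lem:stratification vs p-connection} into an equivalence of categories $\Strat(R^{\bullet}/p^n)\simeq\MIC_p^{\rm tqn}(\frakX_n)$. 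First I would define the functor $\Strat(R^{\bullet}/p^n)\to\MIC_p^{\rm tqn}(\frakX_n)$ by $(M,\varepsilon_M)\mapsto(M,\nabla_M)$ with $\nabla_M=\overline{\iota_M-\varepsilon_M}$ as in Construction~\ref{const:intrinsic}; Lemma~\ref{lem:stratification vs p-connection} already shows this lands in $\MIC_p^{\rm tqn}(\frakX_n)$, and the assignment is visibly functorial in morphisms of stratifications.

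For the quasi-inverse, given $(\calM,\nabla_{\calM})\in\MIC_p^{\rm tqn}(\frakX_n)$ with local expression $\nabla_{\calM}=\sum_{i=1}^d\phi_i\otimes\frac{\rd T_i}{p}$, I would set
\[
\varepsilon_M:=\sum_{\underline m\in\bN^d}(-\underline\phi)^{\underline m}\,\underline X_1^{[\underline m]}=\exp\Big(-\sum_{i=1}^d\phi_i X_{i,1}\Big)
\]
(matching the sign bookkeeping of Construction~\ref{const:intrinsic}, where $\nabla_M=-\sum\phi_i\otimes\frac{\rd T_i}{p}$). Flatness of $\nabla_{\calM}$ gives commutativity of the $\phi_i$, and the topological quasi-nilpotence gives the convergence~(\ref{equ:nilpoteny}), so by Corollary~\ref{cor:stratification} this defines an object of $\Strat(R^{\bullet}/p^n)$; one must also check the $R^1$-semilinear extension of $\varepsilon_M$ from $M$ to $M\otimes_{R,p_1}R^1$ is well defined, which follows from the Leibniz rule exactly as in the computation in the proof of Lemma~\ref{lem:stratification vs p-connection}, read backwards. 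Then I would verify the two round-trip composites are naturally isomorphic to the identity: starting from $(M,\varepsilon_M)$, Corollary~\ref{cor:stratification}(2) writes $\varepsilon_M=\sum_{\underline m}\underline\phi^{\underline m}\underline X_1^{[\underline m]}$, Construction~\ref{const:intrinsic} extracts $\nabla_M=-\sum\phi_i\otimes\frac{\rd T_i}{p}$, and feeding this back into the exponential formula recovers $\varepsilon_M$; conversely the composite on $\MIC_p^{\rm tqn}$ is the identity by the same two formulas. Compatibility with ranks is clear since the underlying module is preserved; compatibility with tensor products and dualities follows because $\varepsilon_{M\otimes N}=\varepsilon_M\otimes\varepsilon_N$ and $\varepsilon_{M^\vee}=(\varepsilon_M^{-1})^\vee$ on the level of stratifications, and the induced connections satisfy the Leibniz rule for tensor products and the compatible rule for duals (equivalently, $\phi_i$ acts on $M\otimes N$ as $\phi_i^M\otimes\id+\id\otimes\phi_i^N$ and on $M^\vee$ as $-(\phi_i^M)^\vee$), so the composite equivalence $\Vect((\frakX_0/A)_{\Prism},\calO_{\Prism,n})\simeq\MIC_p^{\rm tqn}(\frakX_n)$ inherits all three compatibilities.

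The only genuinely delicate point is that both $\varepsilon_M$ and the intrinsic construction of $\nabla_M$ a priori depend on the chart (the coordinates $\underline T_i$, hence the $X_{i,1}$, and the lift of $\delta$), whereas the left-hand side is manifestly chart-independent; however, for the statement as phrased this is not needed, since we may fix the chart and the $\delta$-structure once and for all as in the hypotheses and simply read off one equivalence. (Chart-independence of $\MIC_p^{\rm tqn}$ itself, used only to make sense of the target category, is \cite[Lem.~1.6]{Shiho}.) The main obstacle is thus bookkeeping rather than conceptual: one must track the signs and the pd-divided-power normalization $X_{i,1}\leftrightarrow\frac{\rd T_i}{p}$ through the exponential so that the two functors are honestly mutually inverse, and one must check that the formal exponential series converges $p$-adically in $\End(M)\otimes R^1/p^n$ — which it does precisely because of the topological quasi-nilpotence condition~(\ref{equ:nilpoteny}) together with the fact that the pd-powers $\underline X_1^{[\underline m]}$ already carry the required denominators.
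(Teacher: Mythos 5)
Your proposal is correct and follows essentially the same route as the paper: reduce to stratifications via Lemma \ref{lem:crystal vs stratification}, then match stratifications with topologically quasi-nilpotent flat $p$-connections using Corollary \ref{cor:stratification}, Construction \ref{const:intrinsic} and Lemma \ref{lem:stratification vs p-connection}, with the compatibilities for ranks, tensor products and duals checked by the standard formulas. The paper's own proof is just a citation of these three results, so your write-up is a (correctly sign-checked) expansion of the intended argument rather than a different one.
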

  \begin{proof}
      The equivalence of categories follows from Lemma \ref{lem:crystal vs stratification}, Corollary \ref{cor:stratification}, and Lemma \ref{lem:stratification vs p-connection}. By standard linear algebra, the equivalence preserves ranks, tensor products, and dualities.
  \end{proof}      
  \begin{rmk}\label{rmk:devissage}
      For any $0\leq m<n\leq \infty$, if $\bM\in \Vect((\frakX_0/A),\calO_{\Prism,n})$, then $\bM/p^{m+1}$ and $p^m\bM$ are objects in $\Vect((\frakX_0/A),\calO_{\Prism,m})$ and $\Vect((\frakX_0/A),\calO_{\Prism,n-m})$, respectively. Let $(\calM,\nabla_{\calM})$ be the $p$-connection corresponding to $\bM$. Then it is easy to see that $(\calM/p^{m+1},\nabla_{\calM/p^{m+1}})$ and $(p^m\calM,\nabla_{p^m\calM})$ are respectively $p$-connections corresponding to $\bM/p^{m+1}$ and $p^m\bM$, where $\nabla_{p^m\calM}$ and $\nabla_{\calM/p^{m+1}}$ denote the restrictions of $\nabla_{\calM}$ to $p^m\calM$ and $\calM/p^{m+1}$ respectively.
  \end{rmk}

\subsection{Cohomological comparison}\label{ssec:cohomological comparison}

  Now, we are going to prove that the equivalence in Proposition \ref{prop:local equivalence} is also compatible with cohomologies, following the strategy in \cite[\S5]{Tia23}. In other words, this subsection is devoted to proving the following proposition:
  \begin{prop}\label{prop:cohomology comparison}
      Let $\frakX_0 = \Spec(R_0)$ be a smooth scheme over $A_0$ of dimension $d$ with a fixed chart, $\frakX = \Spf(R)$ be a smooth lifting of $\frakX_0$ over $A$ with the reduction $\frakX_n = \Spec(R_n)$ modulo $p^{n+1}$ for any $n\geq 0$, and $\nu:(\frakX_0/A)_{\Prism}\to \frakX_{0,\et}\simeq\frakX_{n,\et}$ be the natural morphism of sites (cf. \cite[Const. 4.4]{BS22}). Fix a $\delta$-structure on $R$ compatible with that on $A$. Then for any $\bM \in \Vect((\frakX_0/A)_{\Prism},\calO_{\Prism,n})$ with the induced $p$-connection $(\calM,\nabla_{\calM})\in \MIC^{\rm tqn}_p(\frakX_n)$ via the equivalence in Proposition \ref{prop:local equivalence}, there is an explicitly constructed quasi-isomorphism of complexes of sheaves of $A_n$-modules
      \[\rho_{\delta}(\bM):\rR\nu_*\bM\simeq \rD\rR(\calM,\nabla_{\calM}),\]
      which is functorial in $\bM$ and compatible with \'etale localizations of $\frakX_0$.
  \end{prop}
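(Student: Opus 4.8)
$\newcommand{\Prism}{{\mathlarger{\mathbbl{\Delta}}}}$The plan is to compute both sides of the desired quasi-isomorphism via explicit Čech-style complexes attached to the cover $(R,(p))$ and identify them.

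\textbf{Step 1: The prismatic side as a Čech complex.} Since $(R,(p))$ covers the final object of $\Sh((R_0/A)_{\Prism})$, for $\bM\in\Vect((\frakX_0/A)_{\Prism},\calO_{\Prism,n})$ the complex $\rR\nu_*\bM$ is computed (on $\frakX_{0,\et}=\frakX_{n,\et}$) by the Čech--Alexander complex associated to the self-coproducts $(R^{\bullet},(p))$, i.e.\ by $\bM(R^{\bullet},(p))$. By Corollary~\ref{cor:key}(1), $R^{\bullet}\simeq R[\underline X_1,\dots,\underline X_{\bullet}]^{\wedge}_{\pd}/p^n$, and since $\bM$ is a crystal, $\bM(R^\bullet,(p)) = M\otimes_{R/p^n}R^{\bullet}/p^n$, where $M=\bM(R,(p))$. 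So I would first write down, completely explicitly, the cosimplicial $R/p^n$-module $M\otimes_{R/p^n}R[\underline X_1,\dots,\underline X_{\bullet}]^{\wedge}_{\pd}/p^n$ with the face and degeneracy maps from (\ref{equ:face-I})--(\ref{equ:degeneracy-I}), twisted by the stratification $\varepsilon_M=\exp(\sum_i\phi_iX_{i,1})$ from Corollary~\ref{cor:stratification}.

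\textbf{Step 2: Reduce to a Koszul-type complex.} This is the technical heart, and the main obstacle. Following \cite[\S5]{Tia23}, I would pass from the (normalized) cochain complex of this cosimplicial module to a smaller, more tractable complex. The key point is that $R^1/p^n = R[\underline X_1]^{\wedge}_{\pd}/p^n$ is a $p$-completed pd-polynomial algebra, so $M\otimes J_{\pd}$ (with $J_{\pd}$ the augmentation pd-ideal) admits a pd-Poincaré-type resolution, and one shows the normalized complex is quasi-isomorphic to the Koszul-type complex built from the $d$ commuting operators $\phi_i = -p\,\nabla_i$ acting on $M$, with differentials assembled from $\varepsilon_M - \iota_M$. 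Concretely, one builds an explicit homotopy (a ``pd-integration'' operator contracting the $\underline X_j$ variables, as in \cite[\S5]{Tia23}) showing that the extra pd-variables contribute nothing to cohomology beyond the de Rham differential $\nabla_M = -\sum_i \phi_i\otimes \frac{\rd T_i}{p}$ identified in Construction~\ref{const:intrinsic}. This yields the explicit map $\rho_\delta(\bM)$.

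\textbf{Step 3: Identification with the de Rham complex and functoriality.} Once the Čech complex is shown quasi-isomorphic to the Koszul complex of $(\phi_1,\dots,\phi_d)$ on $M$, one recognizes the latter as exactly $\rD\rR(\calM,\nabla_\calM)$: the term in degree $q$ is $M\otimes\widehat\Omega^q_{R_n/A_n}\{-q\}$ via $X_{i,1}\mapsto \rd T_i\{-1\}=\frac{\rd T_i}{p}$ (the twist (\ref{equ:intrinsicII})), and the differential is $\nabla_\calM$ by the computation $\nabla_M = -\sum_i\phi_i\otimes\frac{\rd T_i}{p}$. Functoriality in $\bM$ is automatic since every step (the Čech complex, the contracting homotopy, the final identification) is built naturally from $M$ and $\varepsilon_M$; compatibility with étale localization of $\frakX_0$ follows because a chart and a $\delta$-lift restrict along an étale map $R_0\to R_0'$ in a unique compatible way (the lift $R\to R'$ and its $\delta$-structure are determined), so all the constructions base-change. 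I would also note that it suffices to treat $n<\infty$ and then pass to the limit, or alternatively use the dévissage of Remark~\ref{rmk:devissage} to reduce to $n=0$ plus short exact sequences; but the direct computation above works uniformly in $n\in\bN_*$.
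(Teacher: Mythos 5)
Your overall strategy is the same as the paper's: both compute $\rR\nu_*\bM$ (equivalently, $\rR\Gamma((\frakX_0/A)_{\Prism},\bM)$, since $\frakX_0$ is affine) by the \v Cech--Alexander complex $\bM(R^{\bullet},(p))$ and then compare it with $\rD\rR(\calM,\nabla_{\calM})$ by eliminating the pd-variables $\underline X_j$, following \cite[\S5]{Tia23}. The paper packages your Step 2 as the bicomplex $(\bM(R^{\bullet},(p))\otimes_{R^{\bullet}}\widehat\Omega^{\bullet}_{R^{\bullet}},\rd_1^{\bullet,\bullet},\rd_2^{\bullet,\bullet})$ with two inclusions: $\rho'_M$ from the \v Cech complex (a quasi-isomorphism because $\widehat\Omega^j_{R^{\bullet}}$ is homotopic to zero for $j\geq 1$) and $\rho''_M$ from the de Rham complex; your ``pd-integration'' homotopy is the same mechanism in a different packaging.

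The one point to be careful about is your closing claim that ``the direct computation above works uniformly in $n$.'' The contraction of the $\underline X_j$-variables against the stratification is clean only when $\varepsilon_M$ (equivalently $\nabla_M$) is linear over the base, which holds precisely mod $p$: on $R^1/p$ the two structure maps agree, since $p_0(T_i)-p_1(T_i)=-pX_{i,1}$, so for $n=0$ the columns of the bicomplex split as $\rD\rR(M,\nabla_M)\otimes(\widehat\Omega^{\bullet}_{R^m/R},\rd)$ and the pd-Poincar\'e lemma applies termwise. For $n\geq 1$ this splitting fails ($\nabla_M$ only satisfies the Leibniz rule with the extra term $p\,\rd(f)\otimes x$), and the paper genuinely needs the d\'evissage of Remark \ref{rmk:devissage} to reduce to $n=0$ before invoking (the proof of) \cite[Thm. 5.14]{Tia23}. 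Since you mention the d\'evissage as an alternative, your proof goes through, but it should be promoted from an ``alternative'' to the actual argument for $n\geq 1$. A minor normalization slip: with the paper's conventions one has $\nabla_M=-\sum_i\phi_i\otimes\frac{\rd T_i}{p}$, i.e. $\nabla_i=-\phi_i$ in the basis $\frac{\rd T_i}{p}$ of $\widehat\Omega^1_{R/A}\{-1\}$, rather than $\phi_i=-p\nabla_i$.
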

  Before giving the proof (at the end of this subsection), let us remark that as $\frakX_0$ is affine, if we have $M=\Gamma(\frakX_0,\calM)$ with the induced flat topologically quasi-nilpotent $p$-connection $\nabla_M:M\to M\otimes_{R}\widehat \Omega^1_{R/A}\{-1\}$ from $\nabla_{\calM}$ and the induced de Rham complex $\rD\rR(M,\nabla_M)$, then we can reduce Proposition \ref{prop:cohomology comparison} to showing that there is an explicit quasi-isomorphism
  \begin{equation}\label{equ:prismatic vs dR complex:local}
      \rho_{\delta}(M):\rR\Gamma((\frakX_0/A)_{\Prism},\bM)\simeq \rD\rR(M,\nabla_M),
  \end{equation}
  which is natural in the sense that it (up to now) only depends on the choice of $\delta$-structures on $R$. 
  
  The rest of this subsection is devoted to constructing such a $\rho_{\delta}$ and showing that it is a quasi-isomorphism. We start with some basic constructions. Recall that the given $\delta$-structure on $R$ gives rise to a crystalline prism $(R,(p))\in (R_0/A)_{\Prism}$ which is a covering of final object of $\Sh((R_0/A)_{\Prism})$. The cosimplicial prism $(R^{\bullet},(p))$ represents the \v Cech nerve associated to this covering, and by Corollary \ref{cor:key}(1), we have an isomorphism of cosimplicial rings 
  \[R^{\bullet}\cong R[\underline X_1,\dots,\underline X_{\bullet}]^{\wedge}_{\pd}.\]

  For any $m\geq 1$, we put 
  \[\widehat \Omega^1_{R^m} := \widehat \Omega^1_{R/A}\{-1\}\otimes_{R}R^m\oplus\widehat \Omega^1_{R^m/R},\]
  which admits an $R$-basis $\frac{\rd T_1}{p},\dots,\frac{\rd T_d}{p}, \rd X_{1,1},\dots,\rd X_{1,d},\dots,\rd X_{m,1},\dots,\rd X_{m,d}$. By Proposition \ref{cor:key}(1), we have the identifications 
  \begin{equation}\label{equ:identification on differential}
      \frac{\rd T_{i}}{p} = \frac{\rd T_{0,i}}{p}\text{ and }\frac{\rd T_{j,i}}{p} = \frac{\rd T_{i}}{p}-\rd X_{j.i}
  \end{equation}
  for any $1\leq i\leq d$ and $1\leq j\leq m$. As in \cite[the paragraph above Lem. 5.15]{Tia23}, for any $n,m\geq 0$ and any order-preserving map $f:\{0,1,\dots,m\}\to \{0,1,\dots,n\}$,
  the map 
  \[f_*:\widehat \Omega^1_{R^m}\to \widehat \Omega^1_{R^n}\]
  sending each $\frac{\rd T_{j,i}}{p}$ to $\frac{\rd T_{f(j),i}}{p}$ equips $\widehat \Omega^1_{R^{\bullet}}$ with a structure of cosimplicial $R^{\bullet}$-module, and thus we get a cosimplicial $R^{\bullet}$-module $\widehat \Omega^j_{R^{\bullet}}$ for any $j\geq 1$ by taking $j$-fold wedge products. By \cite[Lem. 5.15]{Tia23}, for each $j\geq 1$, the cosimplicial $R^{\bullet}$-module $\widehat \Omega^j_{R^{\bullet}}$ is homotopic to zero.

  Now, for any $j\geq 0$, we consider the $A$-linear morphism $\rd_R:\widehat \Omega^j_{R^m}\to \widehat \Omega^{j+1}_{R^m}$ satisfying $\rd_{R}\wedge\rd_{R} = 0$ defined as follows: Write
  \[\widehat \Omega^j_{R^m} =\bigoplus_{k=0}^j \widehat \Omega^k_{R}\{-k\}\otimes_R\widehat \Omega^{j-k}_{R^m/R},\]
  and then for any $\omega_k\in \widehat \Omega^k_{R}\{-k\}$ and $\eta_{j-k}\in \widehat \Omega^{j-k}_{R^m/R}$, put
  \[\rd_R(\omega_k\otimes\eta_{j-k}) = p\rd(\omega_k)\otimes \eta_{j-k}+(-1)^k\omega_k\otimes\rd(\eta_{j-k}),\]
  where $p\rd:\widehat \Omega^k_{R}\{-k\}\to \widehat \Omega^{k+1}_{R}\{-k-1\}$ (resp. $\rd:\widehat \Omega^{j-k}_{R^m/R}\to \widehat \Omega^{j-k+1}_{R^m/R}$) is the usual $p$-differential (resp. relative differential) on $\widehat \Omega^k_{R}\{-k\}$ (resp. $\widehat \Omega^{j-k}_{R^m/R}$).
\begin{rmk}
    The differential $\rd_R$ defined above coincides with that in \cite[the paragraph above Lem. 5.15]{Tia23} (in the crystalline case). Indeed, by definition, $R^{\bullet}/p$ is exactly the cosimplicial ring $R_0(\bullet)$ in loc.cit.\footnote{The notation we adapt here is different from that in \cite{Tia23} (in the crystalline case). Here, we work with the smooth scheme $\Spec(R_0)$ over $A/p$, denote by $R$ the $p$-completely flat lifting of $R_0$ over $A$, and denote by $(R^{\bullet},(p))$ the cosimplicial prism representing the \v Cech nerve associated to the crystalline prism $(R,(p))\in (R_0/A)_{\Prism}$. However, in the crystalline case of \cite{Tia23}, the smooth scheme that we consider is denoted by $\Spec(R)$, the $p$-completely flat lifting of $R$ is denoted by $\widetilde R$, the cosimplicial prism representing the \v Cech nerve associated to the crystalline prism $(\widetilde R,(p))\in (R/A)_{\Prism}$ is denoted by $(\widetilde R(\bullet),(p))$, and the quotient $\widetilde R(\bullet)/p$ is denoted by $R(\bullet)$. To avoid confusion, in the rest of this remark, we use the notation $R_0(\bullet)$ to denote ``$R(\bullet)$'' in the sense of \cite{Tia23} (which is exactly the cosimplicial ring $R^{\bullet}/p$ in our setting).}. Thus, the reduction of $\widehat \Omega^1_{R^m}$ modulo $p$ coincides with $\Omega^1_{R_0(m)}$ (cf. the beginning of the last paragraph in \cite[Page 250]{Tia23}), and the reduction of $\widehat \Omega^j_{R^m}$ modulo $p$ coincides with $\Omega^j_{R_0(m)}$ (cf. \cite[(5.15.1)]{Tia23}). For any $0\leq k\leq j$, any $\omega_k\in \widehat \Omega^k_{R}\{-k\}$, and any $\eta_{j-k}\in \widehat \Omega^{j-k}_{R^m/R}$, by definition of our $\rd_R$, we have
    \[\rd_R(\sum_{k=0}^j\omega_k\otimes\eta_{j-k}) = p\big(\sum_{k=0}^j\rd(\omega_k)\otimes \eta_{j-k}\big)+\big(\sum_{k=0}^j(-1)^k\omega_k\otimes\rd(\eta_{j-k})\big).\]
    Modulo $p$, we have
    \[\rd_R(\sum_{k=0}^j\omega_k\otimes\eta_{j-k}) = \sum_{k=0}^j(-1)^k\omega_k\otimes\rd(\eta_{j-k})\]
    which is exactly the formula given in the first displayed equation below \cite[(5.15.1)]{Tia23}.
\end{rmk}

  By letting $m$ vary, for any $\bullet\geq 0$, we now have an $A$-linear morphism $\rd_R:\widehat \Omega^j_{R^{\bullet}}\to \widehat \Omega^{j+1}_{R^{\bullet}}$. The next lemma shows that $\rd_R$ is compatible with comsimplicial structures on its source and target, and thus is a morphism of cosimplicial $A$-modules.
  \begin{lem}\label{lem:d_R is cosimplicial}
      For any $j\geq 0$, the morphism $\rd_R:\widehat \Omega^j_{R^{\bullet}}\to \widehat \Omega^{j+1}_{R^{\bullet}}$ described above is a morphism of cosimplicial $A$-modules.
  \end{lem}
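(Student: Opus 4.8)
The plan is to reduce the claim to a compatibility check on each face and degeneracy map of the cosimplicial ring $R^{\bullet} \cong R[\underline X_1,\dots,\underline X_{\bullet}]^{\wedge}_{\pd}$. Fix an order-preserving $f\colon\{0,\dots,m\}\to\{0,\dots,n\}$ and write $f_*\colon\widehat\Omega^j_{R^m}\to\widehat\Omega^j_{R^n}$ for the induced transition map (the $j$-fold wedge of the degree-one map sending $\tfrac{\rd T_{i,\ell}}{p}\mapsto \tfrac{\rd T_{i,f(\ell)}}{p}$). I must verify $\rd_R\circ f_* = f_*\circ\rd_R$ as maps $\widehat\Omega^j_{R^m}\to\widehat\Omega^{j+1}_{R^n}$. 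Since every face and degeneracy map is a composite of elementary maps of this shape, and since $\rd_R$ is an $A$-linear graded derivation (it satisfies $\rd_R(\alpha\wedge\beta)=\rd_R\alpha\wedge\beta+(-1)^{\deg\alpha}\alpha\wedge\rd_R\beta$, which follows directly from the definition after splitting into the $\widehat\Omega^k_R\{-k\}$ and $\widehat\Omega^{\bullet}_{R^m/R}$ factors), it suffices to check the identity on algebra generators of $R^m$ over $A$ and on the one-forms $\tfrac{\rd T_{i,\ell}}{p}$.

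First I would observe that both $\rd_R$ and each $f_*$ respect the ``bidegree'' decomposition $\widehat\Omega^j_{R^m}=\bigoplus_{k}\widehat\Omega^k_R\{-k\}\otimes_R\widehat\Omega^{j-k}_{R^m/R}$ only up to the identification (\ref{equ:identification on differential}): $f_*$ sends $\tfrac{\rd T_i}{p}=\tfrac{\rd T_{i,0}}{p}$ to itself when $f(0)=0$ but, crucially, the relative generators $\rd X_{i,\ell}=\tfrac{\rd T_i}{p}-\tfrac{\rd T_{i,\ell}}{p}$ get moved into combinations that still lie in $\widehat\Omega^1_{R^n}$. The cleanest way to organize the computation is therefore \emph{not} to work in the split coordinates but to use the total $p$-twisted de Rham differential: note that $\rd_R$ on $\widehat\Omega^{\bullet}_{R^m}$ is characterized by $\rd_R(a)=p\,\rd a$ for $a\in R$ (an element of $R$, embedded via any structure map) together with the requirement $\rd_R\circ\rd_R=0$ and $\rd_R(\tfrac{\rd T_{i,\ell}}{p})=0$; equivalently $\rd_R$ is $p$ times the honest de Rham differential of the $p$-complete PD-algebra $R^m$ over $A$ in the variables $T_{i,0},X_{i,1},\dots,X_{i,m}$, re-expressed through $\tfrac{\rd T_{i,\ell}}{p}$. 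Since the structure map $f^*\colon R^m\to R^n$ induced by $f$ is a ring homomorphism over $A$, it automatically commutes with the honest de Rham differentials, hence with their $p$-multiples; and $f_*$ on forms is exactly the map induced by $f^*$ on Kähler differentials (after the rescaling $\rd T_{i,\ell}\mapsto p\cdot\tfrac{\rd T_{i,\ell}}{p}$, which is compatible because $f$ preserves the index $i$). This gives the commutation on the nose.

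Concretely, the steps I would carry out are: (i) record that $\rd_R$ is the $A$-linear derivation of the graded ring $\widehat\Omega^{\bullet}_{R^m}$ with $\rd_R|_{R^m}=p\,\rd_{R^m/A}$ and $\rd_R$ annihilating $\tfrac{\rd T_{i,\ell}}{p}$ for all $i,\ell$ — checking this matches the case-by-case definition given just before the lemma (the sign $(-1)^k$ and the split formula are exactly the Leibniz rule for a degree-$+1$ derivation relative to the bigrading); (ii) observe that the cosimplicial structure map $f_*$ is induced by the $A$-algebra map $f^*\colon R^m\to R^n$ via functoriality of (rescaled) Kähler differentials, so $f_*$ is an $A$-algebra-differential-module morphism in the appropriate sense; (iii) conclude $f_*\circ\rd_R=\rd_R\circ f_*$ by naturality of the de Rham differential under ring maps, using that $p$ is a fixed scalar in $A$ so multiplication by $p$ commutes with everything; (iv) remark that face maps $p_i$ and degeneracies $\sigma_i$ are precisely the $f_*$ for the standard coface/codegeneracy $f$'s, so the lemma follows for the full cosimplicial structure. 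The main obstacle — really the only subtlety — is step (ii): one must check that the explicit face-map formulas (\ref{equ:face-I}) on the PD-polynomial generators $\underline X_j$ and on $\underline T$ are genuinely induced by a single $A$-algebra homomorphism and that the induced map on one-forms, when written in the $\tfrac{\rd T_{i,\ell}}{p}$ basis, agrees with the prescribed $f_*$; granting Corollary \ref{cor:key}(1) this is a direct substitution, e.g. $p_0(\underline T)=\underline T-p\underline X_1$ gives $p_0(\tfrac{\rd T_i}{p})=\tfrac{\rd T_i}{p}-\rd X_{i,1}=\tfrac{\rd T_{i,1}}{p}$, matching $f_*$ for the coface omitting $0$. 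Once this bookkeeping is in place the cosimplicial identities for $\rd_R$ are immediate.
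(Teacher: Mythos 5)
Your proof is correct, but it takes a genuinely different route from the paper's. The paper proves the lemma by brute force in the split coordinates: it computes $f(p\rd(a))$ and $f(\rd(a))$ separately, exhibits the correction term $(1-\delta_{0,f(0)})\,p\sum_i\frac{\partial f(a)}{\partial T_i}\otimes\rd X_{i,f(0)}$ appearing with opposite signs in the two pieces, and then checks the identity on a general wedge monomial. You instead observe that $\widehat\Omega^{\bullet}_{R^{\bullet}}$ is the subcomplex of $\Omega^{\bullet}_{R^{\bullet}/A}[1/p]$ spanned by wedges of the $\frac{\rd T_{i,\ell}}{p}$, that $\rd_R$ is the restriction of the functorial de Rham differential to this subcomplex, and that $f_*$ is the map induced on (rescaled) K\"ahler differentials by the $A$-algebra homomorphism $T_{i,\ell}\mapsto T_{i,f(\ell)}$; commutation is then naturality of $\rd$ under ring maps. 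This is exactly the conceptual reason the paper's correction terms cancel — the bidegree splitting $\widehat\Omega^k_R\{-k\}\otimes\widehat\Omega^{j-k}_{R^m/R}$ is not preserved by $f_*$ when $f(0)\neq 0$, and you avoid the splitting altogether. Your approach buys brevity and makes the cancellation inevitable rather than miraculous; the paper's buys a self-contained verification that never has to justify the embedding into $\Omega^{\bullet}[1/p]$ or match the case-by-case definition of $\rd_R$ against the functorial one (which is precisely your steps (i)–(ii), and is where all the actual work in your version lives).

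One imprecision worth fixing: $\rd_R$ is \emph{not} ``$p$ times the honest de Rham differential re-expressed through $\frac{\rd T_{i,\ell}}{p}$.'' Under the embedding $\widehat\Omega^j_{R^m}\hookrightarrow p^{-j}\Omega^j_{R^m/A}$ it is the honest differential itself (only the $T$-directions acquire the factor $p$ relative to the rescaled basis; the relative directions $\rd X_{r,s}$ are differentiated without any factor of $p$, as one sees from $\rd_R(\omega_k\otimes\eta_{j-k})=p\rd(\omega_k)\otimes\eta_{j-k}+(-1)^k\omega_k\otimes\rd(\eta_{j-k})$). A uniform ``multiply $\rd$ by $p$'' would introduce a spurious factor of $p$ on $\partial/\partial X_{r,s}$ and would not even square to zero compatibly with the twisting. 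This does not damage your argument, since what you actually use is the correct characterization ($A$-linear graded derivation killing the $\frac{\rd T_{i,\ell}}{p}$ and restricting on $R^m$ to the differential written in the basis $\frac{\rd T_{i,\ell}}{p}$), but the ``equivalently $p\,\rd$'' clause should be deleted or restated as ``the restriction of $\rd_{R^m/A}$ under the inclusion $\widehat\Omega^{\bullet}_{R^m}\subset\Omega^{\bullet}_{R^m/A}[1/p]$.''
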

  \begin{proof}
      Let $f:\{0,1,\dots,m\}\to \{0,1,\dots,n\}$ be an order-preserving map of simplices. By abuse of notations, we also denote by $f$ the associated morphism $M^m\to M^n$ of a cosimplicial $R^{\bullet}$-module $M^{\bullet}$ (for example, $M^{\bullet}\in \{\widehat \Omega^j_{R^{\bullet}},\widehat \Omega^{j+1}_{R^{\bullet}}\}$).
      
      We remark that for any $a = a(\underline T,\underline X_1,\dots,\underline X_m)\in R^m$, we have
      \[f(a(\underline T,\underline X_1,\dots,\underline X_m)) = a(\underline T-p(1-\delta_{0,f(0)})\underline X_{f(0)},\underline X_{f(1)},\dots,\underline X_{f(m)}),\]
      where $\delta_{0,f(0)}$ is Kronecker's $\delta$-function.
      Then we see that
      \begin{equation}\label{equ:d_R is cosimplicial-I}
      \begin{split}
          f(p\rd(a)) = & pf(\sum_{i=1}^d\frac{\partial a}{\partial T_i}\otimes\frac{\rd T_{0,i}}{p})\\ 
          = & p\sum_{i=1}^d\frac{\partial f(a)}{\partial T_i}\otimes\frac{\rd T_{f(0),i}}{p} \\
          = & p\sum_{i=1}^d\frac{\partial f(a)}{\partial T_i}\otimes\frac{\rd T_{0,i}}{p}-(1-\delta_{0,f(0)})p\sum_{i=1}^d\frac{\partial f(a)}{\partial T_i}\otimes\rd X_{f(0),i}\\
          = & p\rd(f(a))-(1-\delta_{0,f(0)})p\sum_{i=1}^d\frac{\partial f(a)}{\partial T_i}\otimes\rd X_{f(0),i},
      \end{split}
      \end{equation}
      and that
      \begin{equation}\label{equ:d_R is cosimplicial-II}
          \begin{split}
              f(\rd(a)) = & f(\sum_{1\leq r\leq d,1\leq s\leq m}\frac{\partial a}{\partial X_{s,r}}\otimes \rd X_{s,r})\\
              = & \sum_{1\leq r\leq d,1\leq s\leq m}f(\frac{\partial a}{\partial X_{s,r}})\otimes \rd X_{f(s),r}\\
              = & \sum_{1\leq r\leq d,1\leq s\leq m}\frac{\partial f(a)}{\partial X_{f(s),r}}\otimes \rd X_{f(s),r}+p(1-\delta_{0,f(0)})\sum_{i=1}^d\frac{\partial f(a)}{\partial T_i}\otimes\rd X_{f(0),i}\\
              = & \rd(f(a))+p(1-\delta_{0,f(0)})\sum_{i=1}^d\frac{\partial f(a)}{\partial T_i}\otimes\rd X_{f(0),i}
          \end{split}
      \end{equation}

      Consider an element 
      \[x = a(\frac{\rd T_{0,i_1}}{p}\wedge\cdots\wedge\frac{\rd T_{0,i_k}}{p})\otimes(\rd X_{s_1,r_1}\wedge\cdots\wedge\rd X_{s_{j-k},r_{j-k}})\in \widehat \Omega^k_{R}\{-k\}\otimes_R\widehat \Omega^{j-k}_{R^m/R},\]
      where $a\in R^m$, $1\leq i_1<\cdots<i_k\leq d$ and $1\leq s_1\leq \cdots\leq s_{j-k}\leq m$. We have
      \begin{equation}\label{equ:d_R is cosimplicial-III}
          \begin{split}
          \rd_R(f(x))
          = & \rd_R(f(a)(\frac{\rd T_{f(0),i_1}}{p}\wedge\cdots\wedge\frac{\rd T_{f(0),i_k}}{p})\otimes(\rd X_{f(s_1),r_1}\wedge\cdots\wedge\rd X_{f(s_{j-k}),r_{j-k}}))\\
          = & (p\rd(f(a))\wedge\frac{\rd T_{f(0),i_1}}{p}\wedge\cdots\wedge\frac{\rd T_{f(0),i_k}}{p})\otimes(\rd X_{f(s_1),r_1}\wedge\cdots\wedge\rd X_{f(s_{j-k}),r_{j-k}})\\
          & + (-1)^k(\frac{\rd T_{f(0),i_1}}{p}\wedge\cdots\wedge\frac{\rd T_{f(0),i_k}}{p})\otimes(\rd(f(a))\wedge\rd X_{f(s_1),r_1}\wedge\cdots\wedge\rd X_{f(s_{j-k}),r_{j-k}}).
          \end{split}
      \end{equation}
      
      On the other hand, we have
      \begin{equation}\label{equ:d_R is cosimplicial-IV}
          \begin{split}
          f(\rd_R(x))
          = & f(p\rd(a)\wedge\frac{\rd T_{0,i_1}}{p}\wedge\cdots\wedge\frac{\rd T_{0,i_k}}{p})\otimes(\rd X_{s_1,r_1}\wedge\cdots\wedge\rd X_{s_{j-k},r_{j-k}}))\\
          &+  f((-1)^k(\frac{\rd T_{0,i_1}}{p}\wedge\cdots\wedge\frac{\rd T_{0,i_k}}{p})\otimes(\rd a\wedge\rd X_{s_1,r_1}\wedge\cdots\wedge\rd X_{s_{j-k},r_{j-k}}))\\
          = & (f(p\rd(a))\wedge\frac{\rd T_{f(0),i_1}}{p}\wedge\cdots\wedge\frac{\rd T_{f(0),i_k}}{p})\otimes(\rd X_{f(s_1),r_1}\wedge\cdots\wedge\rd X_{f(s_{j-k}),r_{j-k}})\\
          & + (-1)^k(\frac{\rd T_{f(0),i_1}}{p}\wedge\cdots\wedge\frac{\rd T_{f(0),i_k}}{p})\otimes(f(\rd(a))\wedge\rd X_{f(s_1),r_1}\wedge\cdots\wedge\rd X_{f(s_{j-k}),r_{j-k}}).
          \end{split}
      \end{equation}
      Now, one can conclude by combining equations (\ref{equ:d_R is cosimplicial-I}), (\ref{equ:d_R is cosimplicial-II}), (\ref{equ:d_R is cosimplicial-III}) and (\ref{equ:d_R is cosimplicial-IV}) together.
  \end{proof}

  Now, let $\bM\in \Vect((\frakX_0/A)_{\Prism},\calO_{\Prism,n})$ be a prismatic crystal of truncation $n$. Then $\bM(R^{\bullet},(p))$ is a cosimplicial $R^{\bullet}$-module. Let $q_0:R = R^0\to R^n$ be the map induced by the natural inclusion $\{0\}\subset\{0,1,\dots,n\}$ of simplices and $(M,\varepsilon_M = \exp(\sum_{i=1}^d\phi_iX_{1,i}))$ be the stratification with respect to the cosimplicial ring 
  \[R^{\bullet}/p^{n+1}\cong R[\underline X_1,\dots,\underline X_{\bullet}]^{\wedge}_{\pd}/p^{n+1}\]
  induced by $\bM$ in the sense of Corollary \ref{cor:stratification}. Then we have a canonical isomorphism 
  \begin{equation}\label{equ:q_0-isomorphism}
      M\otimes_{R,q_0}R^{\bullet}\cong \bM(R^{\bullet},(p)).
  \end{equation}
  Recall that for any $n\geq 0$, the tensor product $\bM(R^{\bullet},(p))\otimes_{R^{\bullet}}\widehat \Omega^n_{R^{\bullet}}$ is a cosimplicial $R^{\bullet}$-module, and we denote by
  \[\rd^{m,n}_1:=\sum_{i=0}^{m+1}(-1)^ip_i:\bM(R^{m},(p))\otimes_{R^{m}}\widehat \Omega^n_{R^{m}}\to \bM(R^{m+1},(p))\otimes_{R^{m+1}}\widehat \Omega^n_{R^{m+1}}\]
  the induced differential map on the totalization of $\bM(R^{\bullet},(p))\otimes_{R^{\bullet}}\widehat \Omega^n_{R^{\bullet}}$. 

  For any $m,n\geq 0$, we also define a map
  \[\rd_2^{m,n}:\bM(R^{m},(p))\otimes_{R^{m}}\widehat \Omega^n_{R^{m}}\to \bM(R^{m},(p))\otimes_{R^{m}}\widehat \Omega^{n+1}_{R^{m}}\]
  as the composite of the following maps
  \[\bM(R^{m},(p))\otimes_{R^{m}}\widehat \Omega^n_{R^{m}}\xrightarrow[(\ref{equ:q_0-isomorphism})]{\cong}M\otimes_{R,q_0}\widehat \Omega^n_{R^{m}}\to M\otimes_{R,q_0}\widehat \Omega^{n+1}_{R^{m}}\xrightarrow[(\ref{equ:q_0-isomorphism})]{\cong}\bM(R^{m},(p))\otimes_{R^{m}}\widehat \Omega^{n+1}_{R^{m}}.\]
  In the above, the first and last arrows are given, respectively, by the isomorphism \eqref{equ:q_0-isomorphism} in the following way:
  \begin{enumerate}
      \item[$\bullet$] $\bM(R^{m},(p))\otimes_{R^{m}}\widehat \Omega^n_{R^{m}}\cong M\otimes_{R,q_0}R^m\otimes_{R^{m}} \widehat \Omega^n_{R^{m}} = M\otimes_{R,q_0}\widehat \Omega^n_{R^{m}}$, and

      \item[$\bullet$] $M\otimes_{R,q_0}\widehat \Omega^{n+1}_{R^{m}} = M\otimes_{R,q_0}R^m\otimes_{R^{m}} \widehat \Omega^{n+1}_{R^{m}}\cong \bM(R^{m},(p))\otimes_{R^{m}}\widehat \Omega^{n+1}_{R^{m}}$.
  \end{enumerate}
  The middle arrow is the map such that for any $x\in M$ and any $\omega\in\widehat \Omega^n_{R^{m}}$, it sends $x\otimes\omega$ to $\nabla_M(x)\wedge\omega+x\otimes\rd_R(\omega)$. 
  In other words, via the above two isomorphisms, we have
  \[\rd_2^{m,n}(x\otimes \omega) = \nabla_M(x)\wedge\omega+x\otimes\rd_R(\omega).\]

  Now, for any $n\geq 0$, by letting $m$ vary, we have a morphism
  \[\rd_2^{\bullet,n}:\bM(R^{\bullet},(p))\otimes_{R^{\bullet}}\widehat \Omega^n_{R^{\bullet}}\to \bM(R^{\bullet},(p))\otimes_{R^{\bullet}}\widehat \Omega^{n+1}_{R^{\bullet}}\]
  for any $\bullet\geq 0$. Similar to Lemma \ref{lem:d_R is cosimplicial}, we have the following result.
  
  \begin{lem}\label{lem:bicomplex}
      For any $n\geq 0$, the map $\rd_2^{\bullet,n}:\bM(R^{\bullet},(p))\otimes_{R^{\bullet}}\widehat \Omega^n_{R^{\bullet}}\to \bM(R^{\bullet},(p))\otimes_{R^{\bullet}}\widehat \Omega^{n+1}_{R^{\bullet}}$ is a morphism of cosimplicial $A$-modules. In particular, we get a bicomplex $(\bM(R^{\bullet},(p))\otimes_{R^{\bullet}}\widehat \Omega^{\bullet}_{R^{\bullet}},\rd_1^{\bullet,\bullet},\rd_2^{\bullet,\bullet})$.
  \end{lem}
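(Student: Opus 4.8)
The plan is to verify that $\rd_2^{\bullet,n}$ commutes (up to the usual sign) with all the cosimplicial structure maps of $\bM(R^{\bullet},(p))\otimes_{R^{\bullet}}\widehat\Omega^n_{R^{\bullet}}$, which amounts to showing that for every order-preserving map $f\colon\{0,\dots,m\}\to\{0,\dots,m'\}$ the square relating $f$ and $\rd_2$ commutes. Since $\rd_2^{m,n}(x\otimes\omega)=\nabla_M(x)\wedge\omega+x\otimes\rd_R(\omega)$ has two summands, I would treat them separately: the part $x\otimes\rd_R(\omega)$ is handled by Lemma \ref{lem:d_R is cosimplicial} (compatibility of $\rd_R$ with the cosimplicial structure on $\widehat\Omega^\bullet_{R^\bullet}$) together with the fact that $f$ acts on the $M$-component via the fixed isomorphism (\ref{equ:q_0-isomorphism}); the part $\nabla_M(x)\wedge\omega$ requires understanding how $f$ interacts with $\nabla_M$.

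The key point is that under the identification (\ref{equ:q_0-isomorphism}), the cosimplicial structure on $\bM(R^\bullet,(p))$ transported to $M\otimes_{R,q_0}R^\bullet$ is governed by the stratification $\varepsilon_M=\exp(\sum_i\phi_iX_{i,1})$; concretely the face maps twist the $M$-factor by the appropriate exponential of the $\phi_i$'s against the coordinates $\underline X_j$, while on the $R^\bullet$-factor they act by the formulas of Corollary \ref{cor:key}(1). So I would first record, for a general $f$, an explicit formula $f(x\otimes 1)=\sum_{\underline n}\underline\phi^{\underline n}(x)\otimes(\text{polynomial in the }\underline X\text{'s})$, and then differentiate it. The crucial bookkeeping identity is (\ref{equ:identification on differential}): $\rd T_{i,j}/p=\rd T_i/p-\rd X_{i,j}$, which is exactly what makes $\nabla_M=-\sum_i\phi_i\otimes\frac{\rd T_i}{p}$ transform correctly, because the ``extra'' $-p\sum_i\frac{\partial}{\partial T_i}\otimes\rd X_{i,f(0)}$ terms that appeared in equations (\ref{equ:d_R is cosimplicial-I}) and (\ref{equ:d_R is cosimplicial-II}) in the proof of Lemma \ref{lem:d_R is cosimplicial} are precisely cancelled by the contribution of $\nabla_M$ coming from the $i=0$ face twist. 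Once the formula for $f(x\otimes 1)$ is in hand, verifying the commutation is a direct (if somewhat lengthy) computation, entirely parallel to the one in Lemma \ref{lem:d_R is cosimplicial}, with the new feature being the Leibniz-type interaction of $\nabla_M$ with the wedge product.

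Having established that $\rd_2^{\bullet,n}$ is a morphism of cosimplicial $A$-modules and that (obviously) $\rd_2^{\bullet,n+1}\circ\rd_2^{\bullet,n}=0$ (this follows from flatness of $\nabla_M$, $\rd_R\wedge\rd_R=0$, and the Leibniz rule of $\nabla_M$, again mirroring the classical de Rham bicomplex identity), the bicomplex structure on $\bM(R^\bullet,(p))\otimes_{R^\bullet}\widehat\Omega^\bullet_{R^\bullet}$ with horizontal differential $\rd_1$ (the alternating sum of face maps) and vertical differential $\rd_2$ follows formally, since $\rd_1$ is by construction the cosimplicial differential and two commuting differentials whose squares vanish give a bicomplex. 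I would also note that $\rd_1^{\bullet,\bullet}$ is compatible with $\rd_1$ because it is literally defined as the alternating sum of the $p_i$, so no additional check is needed there.

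The main obstacle I anticipate is purely computational: writing down the action of a general order-preserving $f$ on $x\otimes 1$ under (\ref{equ:q_0-isomorphism}) and checking that the $\rd X_{i,f(0)}$-terms cancel. This is the place where the compatibility of the stratification twist (governed by the $\phi_i$'s and the face formulas (\ref{equ:face-I})) with the differential structure must be matched up term by term; it is the analogue of the chain-rule bookkeeping in equations (\ref{equ:d_R is cosimplicial-I})--(\ref{equ:d_R is cosimplicial-IV}) but now entangled with the twist by $\varepsilon_M$. Fortunately, as in the proof of Lemma \ref{lem:d_R is cosimplicial}, it suffices to check the identity on the generators $p_i$ and $\sigma_i$ rather than on a general $f$, which keeps the computation finite and manageable.
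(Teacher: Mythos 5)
Your proposal is correct and follows essentially the same route as the paper: the paper likewise records that a cosimplicial structure map $f$ acts on $M\otimes_{R,q_0}R^{\bullet}$ trivially when $f(0)=0$ and by the twist $\exp(\sum_i\phi_iX_{i,f(0)})$ when $f(0)\neq 0$, then checks $f\circ\rd_2=\rd_2\circ f$ by a direct computation in which the $\rd X_{i,f(0)}$-terms produced by $f(\frac{\rd T_i}{p})=\frac{\rd T_i}{p}-\rd X_{i,f(0)}$ cancel against the terms coming from differentiating the exponential twist — exactly the cancellation mechanism you identify. The only cosmetic difference is that the paper works with a general order-preserving $f$ rather than reducing to the generators $p_i,\sigma_i$, and leaves $\rd_2\circ\rd_2=0$ implicit, while you spell it out.
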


   Note that when $\bM = \calO_{\Prism}$, the above lemma reduces to Lemma \ref{lem:d_R is cosimplicial}.
  \begin{proof}
      Let $f:\{0,1,\dots,l\}\to \{0,1,\dots,m\}$ be an order-preserving map of simplices. By abuse of notations, we still denote by $f$ the induced map \[M\otimes_{R,q_0}\widehat \Omega^n_{R^{l}}\to M\otimes_{R,q_0}\widehat \Omega^{n+1}_{R^{m}}\]
      via the isomorphism (\ref{equ:q_0-isomorphism}). By construction of $f$, for any $x\in M$ and any $\omega\in\widehat \Omega^n_{R^l}$, we have
      \[f(x\otimes\omega) = x\otimes f(\omega)\]
      when $f(0) = 0$ and that
      \[f(x\otimes \omega) = p_{0,f(0)}^*(\varepsilon_M)(x)\otimes f(\omega)\]
      when $f(0)\neq 0$, where $p_{0,f(0)}: R^1\to R^m$ is the map induced by the order-preserving map $\{0,1\}\to\{0,1,\dots,m\}$ of simplices sending $0$ and $1$ to $0$ and $f(0)$ respectively, and $\varepsilon_M = \exp(\sum_{i=1}^d\phi_iX_{1,i})$ is the stratification on $M$ induced from $\bM$ via Corollary \ref{cor:stratification}. By Corollary \ref{cor:key}(1), in the $f(0)\neq 0$ case, we have
      \begin{equation}\label{equ:f(0) neq 0}
          f(x\otimes \omega) = \exp(\sum_{i=1}^d\phi_iX_{f(0),i})(x)\otimes f(\omega) = \sum_{\underline h\in\bN^d}\underline \phi^{[\underline h]}(x)\otimes\underline X_{f(0)}^{[\underline h]}f(\omega).
      \end{equation}
      
      To conclude, we have to show that $f\circ \rd_2^{l,n} = \rd_2^{m,n}\circ f$. Fix an $x\in M$ and an $\omega\in \widehat \Omega^n_{R^l}$.
      When $f(0) = 0$, we have
      \begin{equation*}
          \begin{split}
              \rd_2^{m,n}(f(x\otimes\omega)) = & \rd_2^{m,n}(x\otimes f(\omega)) = \nabla_M(x)\wedge f(\omega)+x\otimes\rd_R(f(\omega)).
          \end{split}
      \end{equation*}
      Note that $\nabla_M(x) = -\sum_{i=1}^d\phi_i(x)\otimes\frac{\rd T_i}{p}$ and recall we identify $T_i$ with $T_{0,i}$ in Corollary \ref{cor:stratification}. So it follows from $f(0) = 0$ that $f(\nabla_M(x)) = \nabla_M(x)$, which implies that
      \[\rd_2^{m,n}(f(x\otimes\omega))= f(\nabla_M(x))\wedge f(\omega)+x\otimes\rd_R(f(\omega)) = f(\nabla_M\wedge\omega+x\otimes\omega) = f(\rd_2^{l,n}(x\otimes \omega))\]
      as desired. Here, for the second equality, we use that $f$ commutes with $\rd_R$, as shown in Lemma \ref{lem:d_R is cosimplicial}. When $f(0)\neq 0$, then we have
      \begin{equation*}
          \begin{split}
              \rd_2^{m,n}(f(x\otimes\omega)) = &\rd_2^{m,n}(\exp(\sum_{i=1}^d\phi_iX_{f(0),i})(x)\otimes f(\omega))\\
              = & \rd_2^{m,n}(\sum_{\underline h\in\bN^d}\underline \phi^{\underline h}(x)\underline X_{f(0)}^{[\underline h]}\otimes f(\omega))\\
              = & -\sum_{i=1}^d\sum_{\underline h\in\bN^d}\phi_i\circ\underline \phi^{\underline h}(x)\otimes\frac{\rd T_i}{p}\wedge\underline X_{f(0)}^{[\underline h]} f(\omega) + \sum_{\underline h\in\bN^d}\underline \phi^{\underline h}(x)\otimes\rd_R(\underline X_{f(0)}^{[\underline h]} f(\omega))\\
              = & -\sum_{i=1}^d\sum_{\underline h\in\bN^d}\phi_i\circ\underline \phi^{\underline h}(x)\otimes\frac{\rd T_i}{p}\wedge\underline X_{f(0)}^{[\underline h]} f(\omega) + \sum_{\underline h\in\bN^d}\underline \phi^{\underline h}(x)\otimes\underline X_{f(0)}^{[\underline h]} f(\rd_R(\omega))\\
              &+\sum_{i=1}^d\sum_{\underline h\in\bN^d}\underline \phi^{\underline h}(x)\otimes\underline X_{f(0)}^{[\underline h-\underline 1_i]}\rd X_{f(0),i}\wedge f(\omega),
          \end{split}
      \end{equation*}
      where we also use that $f$ commutes with $\rd_R$ for the last equality. Thanks to (\ref{equ:identification on differential}), we have $f(\frac{\rd T_i}{p}) = \frac{T_{f(0),i}}{p}= \frac{\rd T_i}{p}-\rd X_{f(0),i}$. This combined with (\ref{equ:f(0) neq 0}) implies
      \[\begin{split}
          \rd_2^{m,n}(f(x\otimes\omega)) = & -\sum_{i=1}^df(\phi_i(x))\otimes\frac{\rd T_i}{p}\wedge f(\omega)+f(x\otimes\rd_R(x))+\sum_{i=1}^d\sum_{\underline h\in\bN^d}\underline \phi^{\underline h}\circ\phi_i(x)\otimes\underline X_{f(0)}^{[\underline h]}\rd X_{f(0),i}\wedge f(\omega)\\
          = & -\sum_{i=1}^df(\phi_i(x))\otimes\frac{\rd T_i}{p}\wedge f(\omega)+f(x\otimes\rd_R(x))+\sum_{i=1}^df(\phi_i(x))\rd X_{f(0),i}\wedge f(\omega)\\
          = & f(-\sum_{i=1}^d\phi_i(x)\otimes\frac{\rd T_i}{p}\wedge\omega+x\otimes\rd_R(x))\\
          = & f(\rd_2^{l,n}(x\otimes\omega))
      \end{split}\]
      as desired. This completes the proof.
  \end{proof}
    
    Now, we are able to construct $\rho_{\delta}(M)$ and prove that it is a quasi-isomorphism. 

  \begin{const}\label{const:rho_M}
    Note that we have the following observations:
      \begin{itemize}
        \item The complex $(\bM(R^{\bullet},(p)),\rd_1^{\bullet,0})$ is exactly the totalisation of the cosimplicial $A$-module $\bM(R^{\bullet},(p))$.

        \item The complex $(\bM(R^0,(p))\otimes_{R^0}\widehat \Omega^{\bullet}_{R^0},\rd_2^{0,\bullet})$ is exactly the de Rham complex $\rD\rR(M,\nabla_M)$.
    \end{itemize}
    So if we denote by $\Tot_{\delta}(\bM)$ the totalization of the bicomplex $(\bM(R^{\bullet},(p))\otimes_{R^{\bullet}}\widehat \Omega^{\bullet}_{R^{\bullet}},\rd_1^{\bullet,\bullet},\rd_2^{\bullet,\bullet})$, then there are canonical inclusions $\rho_{M}^{\prime}$ and $\rho_{M}^{\prime\prime}$:
    \[\bM(R^{\bullet},(p))\xrightarrow{\rho_{M}^{\prime}}\Tot(\bM(R^{\bullet},(p))\otimes_{R^{\bullet}}\widehat \Omega^{\bullet}_{R^{\bullet}},\rd_1^{\bullet,\bullet},\rd_2^{\bullet,\bullet})\xleftarrow{\rho_{M}^{\prime\prime}}\rD\rR(M,\nabla_M).\]
    We will see from Lemma \ref{lem:construction of rho} below that both $\rho_{M}^{\prime}$ and $\rho_{M}^{\prime\prime}$ are quasi-isomorphisms of complexes of $A$-modules. So the morphism
    \[\rho(M) := \rho_{M}^{\prime\prime-1}\circ\rho_{M}^{\prime}:\bM(R^{\bullet},(p))\to \rD\rR(M,\nabla_M)\]
    is a well-defined quasi-isomorphism in the derived category of $A$-modules as well.
  \end{const}
    
   \begin{lem}\label{lem:construction of rho}
        Fix an $n\in\bN_*$. Let $\bM$ be a prismatic crystal of truncation $n$ with the associated $p$-connection $(M,\nabla_M)$ in the sense of Proposition \ref{prop:local equivalence}.
        Then both $\rho_{M}^{\prime}$ and $\rho_{M}^{\prime\prime}$ in Construction \ref{const:rho_M} are quasi-isomorphisms in the derived category of $A$-modules, which are functorial in $\bM$ and thus $(M,\nabla_M)$.
    \end{lem}
    \begin{proof}
        The functoriality of $\rho_{M}^{\prime}$ and $\rho_M^{\prime\prime}$ follows from their constructions. It remains to prove they are quasi-isomorphisms. In the rest, we denote by $\Tot_{\delta}(\bM)$ the complex $\Tot(\bM(R^{\bullet},(p))\otimes_{R^{\bullet}}\widehat \Omega^{\bullet}_{R^{\bullet}},\rd_1^{\bullet,\bullet},\rd_2^{\bullet,\bullet})$ for short as before.

        It suffices to deal with the case for $n<\infty$. Indeed, as explained in Remark \ref{rmk:devissage}, for any $\bM\in \Vect((R_0/A)_{\Prism},\calO_{\Prism})$ with corresponding $p$-connection $(M,\nabla_M)$, its reduction $\bM_n:=\bM/p^{n+1}$ is a prismatic crystal of truncation $n$ whose associated $p$-connection in the sense of Proposition \ref{prop:local equivalence} is $(M_n = M/p^{n+1},\nabla_{M_n})$. Note that
        \[\bM(R^{\bullet},(p)) = \lim_n\bM_n(R^{\bullet},(p)) \text{ and }\Tot_{\delta}(\bM) = \lim_n\Tot_{\delta}(\bM_n).\]
        We can conclude $\rho_M$ is a quasi-isomorphism by using that $\rho_M^{\prime} = \lim_n\rho_{M_n}^{\prime}$ and that $\rho_{M_n}^{\prime}$ is a quasi-isomorphism for any $n\geq 0$. A similar argument shows that $\rho_M^{\prime\prime}$ is a quasi-isomorphism if so is $\rho_{M_n}^{\prime\prime}$ for any $n\geq 0$.

        Now, we deal with the case for $n<\infty$. When $n = 0$, i.e. $\bM$ is a Hodge--Tate crystal, we can conclude by \cite[Thm. 5.14 and its proof]{Tia23}. Now, suppose that the result holds true for any $0\leq m<n+1$. We are going to show it also holds true for $n+1$. 
        
        Let $\bM$ be a prismatic crystal of truncation $n+1$ with associated $p$-connection $(M,\nabla_M)$. We only prove $\rho_M^{\prime}$ is a quasi-isomorphism as the case for $\rho^{\prime\prime}_M$ follows from the same argument. By Remark \ref{rmk:devissage}, $p\bM$ and $\bM/p$ are prismatic crystals of truncation $n$ and $0$, respectively with associated respective $p$-connections $(pM,\nabla_{pM})$ and $(M/p,\nabla_{M/p})$. Then we have the following commutative diagram of complexes of $A$-modules:
        \begin{equation*}
            \xymatrix@C=0.4cm{
              0\ar[r] &(p\bM)(R^{\bullet},p)\ar[d]^{\rho_{pM}^{\prime}}\ar[r]&\bM(R^{\bullet},p)\ar[d]^{\rho_{M}^{\prime}}\ar[r]&(\bM/p)(R^{\bullet},p)\ar[d]^{\rho_{M/p}^{\prime}}\ar[r]& 0\\
              0\ar[r] &\Tot_{\delta}(p\bM)\ar[r]&\Tot_{\delta}(\bM)\ar[r]&\Tot_{\delta}(\bM/p)\ar[r]& 0.
            }
        \end{equation*}
        By the construction of $\Tot_{\delta}(\bM)$, we see that 
        \[\Tot_{\delta}(p\bM) = p\Tot_{\delta}(\bM)\text{ and }\Tot_{\delta}(\bM/p) = \Tot_{\delta}(\bM)/p\]
        and thus in the above diagram, the top and bottom are both short exact sequences. By inductive hypothesis, $\rho^{\prime}_{pM}$ and $\rho^{\prime}_{M/p}$ are both quasi-isomorphisms, yielding that $\rho_M^{\prime}$ is also a quasi-isomorphism as desired.
    \end{proof}
    \begin{proof}[\textbf{Proof of Proposition \ref{prop:cohomology comparison}:}]
    As argued in the paragraph below Proposition \ref{prop:cohomology comparison}, we need to construct a quasi-isomorphism
    \[\rR\Gamma((\frakX_0/A)_{\Prism},\bM)\simeq \rD\rR(M,\nabla_M).\]
    By \cite[Prop. 4.10]{Tia23}, as $(R,(p))$ is a covering of the final object of $\Sh((R/A)_{\Prism})$ (cf. Lemma \ref{lem:cover}), there exists a quasi-isomorphism
    \[\iota_{\delta}(\bM):\rR\Gamma((\frakX_0/A)_{\Prism},\bM)\xrightarrow{\simeq}\bM(R^{\bullet},(p)).\]
    
      Let $\rho_{\delta}(M):\rR\Gamma((\frakX_0/A)_{\Prism},\bM)\to \rD\rR(M,\nabla_M)$ be the composite
      \[\rR\Gamma((\frakX_0/A)_{\Prism},\bM)\xrightarrow{\iota_{\delta}(\bM)}\bM(R^{\bullet},(p))\xrightarrow{\rho(M)} \rD\rR(M,\nabla_M).\]
      Then it is a quasi-isomorphism by construction. As both $\rho(M)$ and $\iota_{\delta}(\bM)$ are functorial in $\bM$, compatible with \'etale localization of $\frakX_0 = \Spf(R_0)$, and only depend on the $\delta$-structure on $R$, again by their constructions, we know these also hold true for $\rho_{\delta}(\bM)$. This completes the proof.
    \end{proof}

\section{Prismatic crystal as $p$-connection: Globalization}\label{sec:proof of main theorem}
  Let $(A,(p))$ be a crystalline prism as before.
  In this section, we always assume $\frakX_0$ is a smooth scheme over $A_0$ of dimension $d$ with the absolute Frobenius $\rF_{\frakX_0}:\frakX_0\to\frakX_0$. We assume $(\frakX_0,\rF_{\frakX_0})$ admits a lifting $(\frakX_1,\rF_{\frakX_1})$ over $A_1$.

  The purpose of this section is to prove Theorem \ref{thm:main} (cf. \S\ref{ssec:proof-I} and  \S\ref{ssec:proof-II}) by showing that when $(\frakX_0,\rF_{\frakX_0})$ admits a lifting $(\frakX_1,\rF_{\frakX_1})$ over $A_1$, the local constructions in Proposition \ref{prop:compare local equivalence} and Proposition \ref{prop:cohomology comparison} are independent of the (suitable) choice of $\delta$-structures.

\subsection{The compatibility of local equivalence}\label{ssec:compare local equivalence}
  Now, we assume $\frakX_0 = \Spec(R_0)$ is affine with a chart. Let $\frakX = \Spf(R)$ be a smooth lifting of $\frakX_0$ to $A$ and $\frakX_n = \frakX\times_{\Spf(A)}\Spec(A_n)$ for any $n\geq0$. We are going to show if two $\delta$-structures $\delta_1$ and $\delta_2$ on $R$ coincide modulo $p$ (equivalently, Assumption \ref{assumpsion:key} holds true for $\delta_1$ and $\delta_2$), then the equivalences in Proposition \ref{prop:compare local equivalence} with respect to $\delta_i$'s also coincide. From now on, we keep assumptions and notations of Corollary \ref{cor:key}.

  Fix an $n\in\bN_* = \bN\cup\{\infty\}$.
  We first study the stratifications with respect to the cosimplicial ring 
  \[\widetilde R^{\bullet}/p^{n+1} \cong R_{12}[\underline Z_1,\dots,\underline Z_{\bullet}]^{\wedge}_{\pd}/p^{n+1}.\]
  Here and in what follows, for a $p$-adically complete ring $S$, we put $S/p^{n+1}:=S$ when $n = \infty$.
  
  Let $(N,\varepsilon_{N})$ be a pair of a finite projective $R_{12}/p^n$-module and an $R_{12}[\underline Z_1]^{\wedge}_{\pd}$-linear isomorphism 
  \[\varepsilon_{N}:N\otimes_{R_{12},p_0}R_{12}[\underline Z_1]^{\wedge}_{\pd}\to N\otimes_{R_{12},p_1}R_{12}[\underline Z_1]^{\wedge}_{\pd}.\]
  Similar to \S\ref{sec:local construction}, we may assume the restriction of $\varepsilon_{N}$ to $N$ is of the form 
  \[\varepsilon_{N} = \sum_{\underline m\in\bN^d}\theta_{\underline m}\underline Z_1^{[\underline m]},\]
  where $\theta_{\underline m}\in\End_{R_1}(N)$ satisfying $\lim_{|\underline m|\to+\infty}\theta_{\underline m} = 0$. Using (\ref{equ:face-II}), in this case, we still have
  \begin{equation*}
      \begin{split}
          &p_2^*(\varepsilon_N)\circ p_0^*(\varepsilon_N) = \sum_{\underline l,\underline k,\underline m\in\bN^d}(-1)^{|\underline l|}\theta_{\underline k}\circ\theta_{\underline l+\underline m}\underline Z_1^{[\underline k]}\underline Z_1^{[\underline l]}\underline Z_2^{[\underline m]},\\
          &p_1^*(\varepsilon_N) = \sum_{\underline m\in\bN^d}\theta_{\underline m}\underline Z_2^{[\underline m]},\\
          &\sigma_0^*(\varepsilon_N) = \theta_{\underline 0}.
      \end{split}
  \end{equation*}
  By a similar argument as in \S\ref{sec:local construction}, the following result is true.
  \begin{lem}\label{lem:stratification for comparing local data}
      For any $1\leq i\leq d$, put $\phi_i = \theta_{\underline 1_i}\in \End_{A}(N)$. Then the following two statements are equivalent.
      \begin{enumerate}
          \item[(1)] The pair $(N,\varepsilon_{N} = \sum_{\underline m\in\bN^d}\theta_{\underline m}\underline Z_1^{[\underline m]})$ is an object in $\Strat(\widetilde R^{\bullet}/p^{n+1})$.

          \item[(2)] The morphisms $\phi_i$'s are topologically quasi-nilpotent and commute with each other such that for any $\underline m = (m_1,\dots,m_d)\in\bN^d$, we have 
          \[\theta_{\underline m} = \underline \phi^{\underline m}:=\phi_1^{m_1}\cdots\phi_d^{m_d}.\]
          In particular, we have $\varepsilon_N = \exp(\sum_{i=1}^d\phi_iZ_{1,i})$.
      \end{enumerate}
  \end{lem}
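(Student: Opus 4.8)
The plan is to repeat verbatim the argument of \S\ref{ssec:local equivalence}, observing that the face and degeneracy maps (\ref{equ:face-II}) governing $\widetilde R^{\bullet}\cong R_{12}[\underline Z_1,\dots,\underline Z_{\bullet}]^{\wedge}_{\pd}/p^n$ have exactly the same shape as the maps (\ref{equ:face-I}) and (\ref{equ:degeneracy-I}) governing $R_*^{\bullet}\cong R_*[\underline X_1,\dots,\underline X_{\bullet}]^{\wedge}_{\pd}/p^n$, only with the base ring $R_{12}$ in place of $R_*$ and the pd-variables $\underline Z_j$ in place of $\underline X_j$. Since the restriction of $\varepsilon_N$ to $N$ is an $R_{12}$-linear combination of the monomials $\underline Z_1^{[\underline m]}$ and all the structure maps are $R_{12}$-linear on the extra pd-variable $\underline Y$, the presence of $\underline Y$ (equivalently, the fact that we work over $R_{12}$ rather than over $R_1$) plays no role in the computation.

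First I would record, exactly as in the display preceding the lemma, the three identities for $p_2^*(\varepsilon_N)\circ p_0^*(\varepsilon_N)$, $p_1^*(\varepsilon_N)$ and $\sigma_0^*(\varepsilon_N)$; these follow from (\ref{equ:face-II}), the identification $\sigma_0(\underline Z_1) = 0$ and the $R_{12}$-linearity of $\sigma_0$ in precisely the way Lemma \ref{lem:p_2p_0=p_1} is deduced from (\ref{equ:face-I}). Comparing coefficients of the monomials $\underline Z_1^{[\underline k]}\underline Z_1^{[\underline l]}\underline Z_2^{[\underline m]}$ then shows that $(N,\varepsilon_N)\in\Strat(\widetilde R^{\bullet})$ if and only if $\theta_{\underline 0} = \id_N$, $\lim_{|\underline m|\to+\infty}\theta_{\underline m} = 0$, and for every $\underline m\in\bN^d$ one has $\theta_{\underline m} = \sum_{\underline l,\underline k\in\bN^d}(-1)^{|\underline l|}\theta_{\underline k}\circ\theta_{\underline l+\underline m}\underline Z_1^{[\underline k]}\underline Z_1^{[\underline l]}$.

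Next I would invoke Lemma \ref{lem:solve p_2p_0=p_1}, which is purely formal in the operators $\theta_{\underline m}$, $\phi_i := \theta_{\underline 1_i}$ and the pd-monomials, and hence applies word for word after renaming $\underline X$ to $\underline Z$: under $\theta_{\underline 0} = \id_N$, the recursion above is equivalent to the $\phi_i$'s pairwise commuting together with $\theta_{\underline m} = \underline\phi^{\underline m}$ for all $\underline m$. The condition $\lim_{|\underline m|\to+\infty}\theta_{\underline m} = 0$ then reads $\lim_{|\underline n|\to+\infty}\underline\phi^{\underline n} = 0$, i.e. topological quasi-nilpotence of the $\phi_i$'s, and substituting $\theta_{\underline m} = \underline\phi^{\underline m}$ into $\varepsilon_N = \sum_{\underline m\in\bN^d}\theta_{\underline m}\underline Z_1^{[\underline m]}$ yields $\varepsilon_N = \exp(\sum_{i=1}^d\phi_i Z_{i,1})$, as claimed.

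The only point requiring genuine attention — and the one place where the present situation could differ from \S\ref{ssec:local equivalence} — is to confirm that the cosimplicial identities (\ref{equ:face-II}), with their associated $R_{12}$-linear degeneracies, really do reproduce the combinatorics of monomials used in the proof of Lemma \ref{lem:p_2p_0=p_1}; this is immediate from Corollary \ref{cor:key}(2), so I do not anticipate any real obstacle, and the proof is otherwise mechanical.
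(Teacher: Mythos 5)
Your proposal is correct and matches the paper's own proof, which simply states that the lemma ``follows from the same proof for Corollary \ref{cor:stratification}'': one records the three identities derived from (\ref{equ:face-II}) and then applies the purely formal Lemma \ref{lem:solve p_2p_0=p_1} with $\underline Z$ in place of $\underline X$. One minor imprecision: the face map $p_0$ is \emph{not} $R_{12}$-linear on $\underline Y$ (it sends $\underline Y\mapsto \underline Y+\underline Z_1$), which is why the paper only takes $\theta_{\underline m}\in\End_{R_1}(N)$ rather than $\End_{R_{12}}(N)$ --- but this does not affect the coefficient comparison in the $\underline Z$-variables, so the argument goes through as you describe.
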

  \begin{proof}
      It follows from the same proof for Corollary \ref{cor:stratification}.
  \end{proof}

  Fix an object $\bM\in \Vect((\frakX_0/A)_{\Prism},\calO_{\Prism,n})$. For each $*\in \{1,2\}$, let $(M_*,\varepsilon_*)\in \Strat(R^{\bullet}_*/p^{n+1})$ be the stratification induced by $\bM$ via Lemma \ref{lem:crystal vs stratification} and $(M_{*},\nabla_{*} = -\sum_{i=1}^d\phi_{*,i}\otimes\frac{\rd T_i}{p})\in \MIC_p^{\rm tqn}(\frakX_n)$ be the $p$-connection induced by $\bM$ via Proposition \ref{prop:local equivalence} with respect to the $\delta$-structure $\delta_*$ on $R$ as in \S\ref{sec:local construction}.
  \begin{prop}\label{prop:compare local equivalence}
      Suppose Assumption \ref{assumpsion:key} holds for $\delta_1$ and $\delta_2$. Then there is a natural isomorphism $\iota_{12}:M_1\xrightarrow{\cong} M_2$ satisfying the following conditions:
      \begin{enumerate}
          \item[(1)] The isomorphism $\iota_{12}$ is compatible with connections, i.e. the following diagram is commutative
          \[\xymatrix@C=0.5cm{
            M_1\ar[rr]^{\iota_{12}}\ar[d]_{\nabla_1}&&M_2\ar[d]^{\nabla_2}\\
            M_1\otimes_{R}\widehat \Omega^1_{R/A}\{-1\}\ar[rr]^{\iota_{12}\otimes\id}&&M_2\otimes_{R}\widehat \Omega^1_{R/A}\{-1\},
          }\]
          and $\iota_{12}$ also admits an inverse $\iota_{21}:M_2\to M_1$ which is also compatible with connections.

          \item[(2)] Let $\delta_3$ be another $\delta$-structure on $R$ with induced prism $(R_3,(p))$ and the induced $p$-connection $(M_3,\nabla_3)$. Let $\iota_{*3}:M_*\xrightarrow{\cong} M_3$ be the corresponding natural isomorphism for each $*\in\{1,2\}$. Then we have $\iota_{13} = \iota_{23}\circ\iota_{12}$.
      \end{enumerate}
  \end{prop}
  \begin{proof}
      We first construct the desired $\iota_{12}$.
      Recall by Corollary \ref{cor:crystal vs stratification}, we have a canonical identification
      \[M_1 = \lim_{\Delta}\bM(\widetilde R^{\bullet},(p)).\]
      We identify $N:=\bM(R_{12}=\widetilde R^0,(p))$ with $M_2\otimes_{R_2}R_{12}$. Then it carries the induced stratification 
      \[\varepsilon_N: N\otimes_{R_{12},p_0}R_{12}[\underline Z_1]^{\wedge}_{\pd}\cong M_2\otimes_{R_2}R_2[\underline Y,\underline Z_1]^{\wedge}_{\pd }\xrightarrow{\varepsilon_2\otimes 1}M_2\otimes_{R_2}R_2[\underline Y,\underline Z_1]^{\wedge}_{\pd}\cong  N\otimes_{R_{12},p_1}R_{12}[\underline Z_1]^{\wedge}_{\pd}\]
      such that $(N,\varepsilon_N)\in\Strat(\widetilde R^{\bullet}/p^{n+1})$.
      Consider the automorphism 
      \[\iota_{12}:N= M_2\otimes_{R_2}R_{12} \xrightarrow{\exp(-\sum_{i=1}^d\phi_{2,i}Y_{i})} M_2\otimes_{R_2}R_{12} = N.\]
    We claim that it makes the following diagram of morphisms of $\widetilde R^1$-modules
\begin{equation}\label{diag:descent}
          \xymatrix@C=0.5cm{
(M_2\otimes_{R_2}R_{12})\otimes_{R_{12},p_0}\widetilde R^1\ar@{=}[r]\ar[d]^{\iota_{12}\otimes_{p_0}\id}&M_2\otimes_{R_2}\widetilde R^1\ar[rr]^{\id}&&M_2\otimes_{R_2}\widetilde R^1\ar@{=}[r]&
(M_2\otimes_{R_2}R_{12})\otimes_{R_{12},p_1}\widetilde R^1\ar[d]^{\iota_{12}\otimes_{p_1}\id }\\
(M_2\otimes_{R_2}R_{12})\otimes_{R_{12},p_0}\widetilde R^1\ar[rrrr]^{\varepsilon_N}&&&&(M_2\otimes_{R_2}R_{12})\otimes_{R_{12},p_1}\widetilde R^1
          }
      \end{equation}
      commute. In other words, we have to show that for any $x\in M_2$ (regarded as the element $x\otimes1\otimes1$ in $M_2\otimes_{R_2}R_{12}\otimes_{R_{12},p_0}\widetilde R^1$), 
      \[(\iota_{12}\otimes_{p_1}\id)(x) = \varepsilon_N((\iota_{12}\otimes_{p_0}\id)(x)).\]
      Recall by \eqref{equ:face-II}, we have that $p_0(\underline Y) = \underline Y+\underline Z_1$ and $p_1(\underline Y) = \underline Y$. By the definition of $\iota_{12}$, we have 
      \[(\iota_{12}\otimes_{p_1}\id)(x) = \exp(-\sum_{i=1}^d\phi_{2,i}Y_i)(x)\]
      and 
      \[(\iota_{12}\otimes_{p_0}\id)(x) = \exp(-\sum_{i=1}^d\phi_{2,i}(Y_i+ Z_{1,i}))(x).\]
      By Lemma \ref{lem:stratification for comparing local data} and the construction of $\varepsilon_N$, we have $\varepsilon_N = \exp(\sum_{i=1}^d\phi_{2,i}Z_{1,i})$, and thus
      \[\varepsilon_N((\iota_{12}\otimes_{p_0}\id)(x)) = \exp(\sum_{i=1}^d\phi_{2,i}Z_{1,i})\big(\exp(-\sum_{i=1}^d\phi_{2,i}(Y_i+ Z_{1,i}))(x)\big) = \exp(-\sum_{i=1}^d\phi_{2,i}Y_i)(x),\]
      yielding that $(\iota_{12}\otimes_{p_1}\id)(x) = \varepsilon_N((\iota_{12}\otimes_{p_0}\id)(x))$ as desired. 

    Now, as diagram \eqref{diag:descent} is commutative, the canonical identifications
      
      \begin{equation}\label{equ:two term comparison}
          M_1\otimes_{R_1}R_{12} \cong \bM(R_{12},(p))\cong M_2\otimes_{R_2}R_{12},
      \end{equation}
      induce a canonical isomorphism
      \begin{equation}\label{equ:delta1 vs delta2-I}
          \iota_{12} = \exp(-\sum_{i=1}^d\phi_{2,i}\frac{T_i-S_i}{p}): M_1\to M_2.
      \end{equation}
      It remains to prove that this isomorphism $\iota_{12}$ is desired.
      
      For Item (1): By the symmetry between $\delta_1$ and $\delta_2$ (equivalently, exchanging $1$ (resp. $T_i$'s) and $2$ (resp. $S_i$'s) in (\ref{equ:delta1 vs delta2-I})), the canonical identifications in (\ref{equ:two term comparison}) induce a canonical isomorphism
      \[\iota_{21} = \exp(-\sum_{i=1}^d\phi_{1,i}\frac{S_i-T_i}{p}):M_2\to M_1\]
      which is indeed the inverse of $\iota_{12}$ above. So we can deduce from $\iota_{12}\circ\iota_{21} = \id_{M_1}$ that $\phi_{1,i} = \phi_{2,i}$ for any $1\leq i\leq d$. In other words, the isomorphism $\iota_{12}$ and its inverse $\iota_{21}$ are compatible with connections as expected.

      For Item (2): Now, assume $R$ is equiped with another $\delta$-structure $\delta_3$ with the induced prism $(R_3,(p))\in (R_0/A)_{\Prism}$. Let $\underline U$ be the coordinate corresponding to the given chart on $R_3$ and $(M_3,\nabla_3 = -\sum_{i=1}^d\phi_{3,i}\otimes\frac{\rd T_i}{p})$ be the $p$-connection induced by $\bM$ via the equivalence in Proposition \ref{prop:local equivalence}. Then Item (1) implies that $\phi_{1,i} = \phi_{2,i} = \phi_{3,i}$, which we shall denote by $\phi_i$ for any $1\leq i\leq d$. Let $(R_{123},(p))$ be the coproduct of prisms
      \[(R_1,(p))\times(R_2,(p))\times(R_3,(p)).\]
      Then the above argument implies that \begin{enumerate}
          \item[$\bullet$] $\iota_{12} = \exp(-\sum_{i=1}^d\phi_{i}\frac{T_i-S_i}{p}): M_1\to M_2$,

          \item[$\bullet$] $\iota_{13} = \exp(-\sum_{i=1}^d\phi_{i}\frac{T_i-U_i}{p}): M_1\to M_3$, 

          \item[$\bullet$] $\iota_{23} = \exp(-\sum_{i=1}^d\phi_{i}\frac{S_i-U_i}{p}): M_2\to M_3$.
      \end{enumerate}
      Now, we can directly check $\iota_{13} = \iota_{23}\circ\iota_{12}$ as desired.
  \end{proof}

\subsection{The proof of Theorem \ref{thm:main}(1)}\label{ssec:proof-I}
\begin{proof}[\textbf{Proof of Theorem \ref{thm:main}}(1)]
    Recall that for any $n\in\bN_*$, when $\frakX_0$ admits a smooth lifting $\frakX_n$ over $A_n$, there is a canonical identification of sites $\frakX_{0,\et}\simeq \frakX_{n,\et}$.

    Now, assume $\frakX_0$ lifts to a formally smooth formal scheme over $A_n$ for some $n\geq 1$ (which is allowed to be $\infty$) and $\rF_{\frakX_0}$ admits a lifting $\rF_{\frakX_1}$ over $A_1$. Fix such a lifting $\frakX_n$. Let $\{\frakU_{0,i} = \Spec(R_{0,i})\to\frakX_0\}_{i\in I}$ be an \'etale covering of $
    \frakX_0$ with each $\frakU_{0,i}$ affine smooth with a fixed chart. Let $\frakU_{1,i}=\Spf(R_{1,i})\to\frakX_1$ be the lifting of the \'etale map $\frakU_{0,1}\to \frakX_0$ over $A_1$ induced from the given lifting $\frakX_1$, and let $\varphi_{1,i}:R_{1,i}\to R_{1,i}$ be the morphism over $\phi_A:A_1\to A_1$ determined by the given $\rF_{\frakX_1}$.
    For each $i\in I$, fix a lifting $R_i$ of $R_{1,i}$ over $A$ and equip it with a $\delta$-structure $\delta_i$ of which the induced Frobenius map lifts the $\varphi_{i,1}$ above such that the induced Frobenius map on $R_i$ is a lifting of $\varphi_{1,i}$ on $R_{1,i}$. Thus for each $i\in I$, we get a prism $(R_i,(p))\in(\frakX_0/A)_{\Prism}$. 
    
    Put $\frakU_{0,ij} = \frakU_{0,i}\times_{\frakX_0}\frakU_{0,j}$. As $\frakX_0$ is separated, $\frakU_{0,ij}$ is also affine. Thus it is of the form $\frakU_{0,i} = \Spec(R_{0,ij})$ such that the natural maps $R_{0,i}\to R_{0,ij}$ and $R_{0,j}\to R_{0,ij}$ are \'etale. Let $R_{ij}$ be a $p$-completely smooth lifting of $R_{0,ij}$ over $A$. By \'etaleness, for each $*\in \{i,j\}$, there is a unique $p$-completely \'etale morphism $R_*\to R_{ij}$ lifting $R_{0,*}\to R_{0,ij}$ such that the $\delta$-structure $\delta_*$ on $R_*$ uniquely extends to a $\delta$-structure, still denoted by $\delta_i$ for simplicity, on $R_{ij}$ (cf. \cite[Lem. 2.18]{BS22}). By construction, these two $\delta$-structures on $R_{ij}$ satisfy Assumption \ref{assumpsion:key}; that is, we have $\delta_i\equiv\delta_j\mod p^2$.

     Now, the desired equivalence follows from Proposition \ref{prop:compare local equivalence} as the following presheaves of categories on $\frakX_{0,\et}\simeq\frakX_{n,\et}$
    \[\frakU_0\in\frakX_{0,\et}\mapsto\Vect((\frakU_0/A)_{\Prism},\calO_{\Prism,n})\]
    and 
    \[\frakU_0\in\frakX_{0,\et}\mapsto\MIC^{\rm tqn}_p(\frakU_n)\]
    are sheaves for \'etale topology, where $\frakU_n\to \frakX_n$ is the lifting of $\frakU_0\to \frakX_0$ uniquely induced by the given lifting $\frakX_n$ of $\frakX_0$.
\end{proof}

\subsection{The compatiblility of cohomologies}\label{ssec:compatibility of cohomology}
  As in \S\ref{ssec:compare local equivalence}, we again assume $\frakX_0 = \Spec(R_0)$ is affine with a chart. Let $\frakX = \Spf(R)$ be a smooth lifting of $\frakX_0$ to $A$ and $\frakX_n = \Spf(R_n) = \frakX\times_{\Spf(A)}\Spf(A_n)$ for any $n\in\bN_*=\bN\cup\{\infty\}$. We want to assign to each $\bM\in\Vect((\frakX_0/A)_{\Prism},\calO_{\Prism,n})$ a \emph{canonical} quasi-isomorphism 
  \[\rR\Gamma((\frakX_0/A)_{\Prism},\bM)\simeq \rR\Gamma(\frakX_{n,\et},\rD\rR(\calM,\nabla_{\calM}))\]
  in the sense that it \emph{only} depends on the Frobenius endormorphism of $R_1$,
  where $(\calM,\nabla_{\calM})$ is the $p$-connection induced from $\bM$ via Theorem \ref{thm:main}(1).

  \begin{conv}\label{conv:R_Sigma}
      Let $\Sigma = \{\delta_1,\dots,\delta_h\}$ be a set of $\delta$-structures on $R$ satisfying 
      \[(\ast)\qquad\delta_1\equiv\delta_2\equiv\cdots\equiv\delta_h\mod p.\]
      Let $(R_i,(p))\in(R_0/A)_{\Prism}$ be the prism induced by $\delta_i$ as in the paragraph above Lemma \ref{lem:cover}, on which the coordinate is denoted by $\underline T^{(i)}$, i.e. $T_1^{(i)},\dots,T_d^{(i)}$.
      Let 
      \[(R_{\Sigma},(p)) = (R_1,(p))\times\cdots\times(R_h,(p))\]
      be the coproduct of $(R_i,(p))$'s in $(R_0/A)_{\Prism}$. Write $\underline Y_{i}:=\frac{\underline T^{(1)}-\underline T^{(i+1)}}{p}$ for any $1\leq i\leq h-1$. By Proposition \ref{prop:key},  $R_{\Sigma}$ is an $(R_1\widehat \otimes_A\cdots\widehat \otimes_AR_h)$-algebra such that for any $1\leq j\leq h$, we have an isomorphism of $R_j$-algebras
      \[R_{\Sigma}  \cong R_j[\underline Y_i\mid 1\leq i\leq h-1]^{\wedge}_{\pd}.\]
      Using Lemma \ref{lem:cover}, it is easy to see that $(R_{\Sigma},(p))$ is a covering of the final object of $\Sh((R_0/A)_{\Prism})$.
      For further use, we also define
      \[\widehat \Omega^1_{R_{\Sigma}}\{-1\} = \oplus_{i=1}^hR_{\Sigma}\otimes_{R_i}\widehat \Omega^1_{R_i}\{-1\}.\]
      Then for any $1\leq r\leq d$ and any $1\leq i\leq h-1$, we have 
      \[\frac{\rd T^{(i+1)}_r}{p} = \frac{\rd T^{(1)}_r}{p}-\rd Y^{(i)}_r\] 
      and there is a natural derivation
      \begin{equation}\label{equ:d_Sigma}
          \rd_{\Sigma}:R_{\Sigma}\to \widehat \Omega^1_{R_{\Sigma}}\{-1\}.
      \end{equation}
      Note that for any $1\leq r\leq d$ and any $1\leq i\leq h-1$, we have 
      \[\rd_{\Sigma}(Y_{r}^{(i)}) =\rd Y_{r}^{(i)} = \frac{\rd T^{(1)}_r}{p}-\frac{\rd T^{(i+1)}_r}{p} \text{ and } \rd_{\Sigma}(T_{r}^{(1)}) = p\rd T_r^{(1)}.\]
  \end{conv}
  \begin{lem}\label{lem:coproduct of R_Sigma}
      Keep notations of Convention \ref{conv:R_Sigma}.
      Let $(R_{\Sigma}^{\bullet},(p))$ be the \v Cech nerve associated to $(R_{\Sigma},(p))$. For any $m\geq k\geq 1$, let $\underline T_{k-1}^{(i)}$ be the coordinate on the $k$-th $(R_i,(p))$ in $(R^m_{\Sigma},(p))$. For any $1\leq j\leq m$, let $\underline X_j^{(i)}:=\frac{\underline T_0^{(i)}-\underline T_j^{(i)}}{p}$. Then there is an isomorphism of cosimplicial rings
      \[R^{\bullet}_{\Sigma} \cong R_{\Sigma}[\underline X_{1}^{(i)},\dots,\underline X_{\bullet}^{(i)}\mid 1\leq i\leq h]^{\wedge}_{\pd},\]
      where the $R_{\Sigma}$-linear structures on both sides are induced from the first component of $R_{\Sigma}^{\bullet}$.
       Moreover, via the above isomorphism, the face maps $p_i$'s and the degeneracy maps $\sigma_i$'s on 
       \[R_{\Sigma}[\underline X_{1}^{(i)},\dots,\underline X_{\bullet}^{(i)}\mid 1\leq i\leq h]^{\wedge}_{\pd}\]
       satisfy the following conditions.
       \begin{enumerate}
           \item[(1)] The degeneracy maps $\sigma_i$'s are $R_{\Sigma}$-linear such that for any $1\leq l\leq h$ and for each $j$,
           \[
              \begin{split}
                  \sigma_i(\underline X_j^{(l)}) = \left\{
                  \begin{array}{rcl}
                      0, & (i,j) = (0,1) \\
                      \underline X_{j-1}^{(l)}, & i < j \text{ and }(i,j)\neq (0,1)\\
                      \underline X_j^{(l)}, & i\geq j.
                  \end{array}
                  \right.
              \end{split}
          \]

          \item[(2)] When $i\neq 0$, the face maps $p_i$'s are $R_{\Sigma}$-linear such that for any $1\leq l\leq h$ and for each $j$,
          \begin{equation*}
              \begin{split}
                  p_i(\underline X^{(l)}_j) = \left\{
                  \begin{array}{rcl}
                      \underline X^{(l)}_{j+1}, & 0<i\leq j\\
                      \underline X^{(l)}_j, & i>j
                  \end{array}
                  \right.
              \end{split}
          \end{equation*}

          \item[(3)] For any $1\leq l\leq h$ and for each $j$, we have
          \begin{equation*}
              p_0(\underline T_0^{(l)}) = \underline T_0^{(l)}-p\underline X_1^{(l)} \text{ and } p_0(\underline X_j^{(l)}) = \underline X^{(l)}_{j+1}-\underline X^{(l)}_1.
          \end{equation*}
          In particular, for any $1\leq k\leq h-1$, we have
          \[p_0(\underline Y_k) = \underline Y_k-\underline X_1^{(1)}+\underline X_1^{(k+1)}.\]
       \end{enumerate}
  \end{lem}
  \begin{proof}
      The desired isomorphism follows from an argument similar to the proof of Proposition \ref{prop:key}. The items (1), (2) and (3) follow from the definitions of $\underline T_j^{(i)}$'s.
  \end{proof}
  
  As before, let $M_{\Sigma}$ be a finite projective $R_{\Sigma}/p^{n+1}$-module with an $R_{\Sigma}^1$-linear isomorphism
  \[\varepsilon_{M_{\Sigma}}:M_{\Sigma}\otimes_{R_{\Sigma},p_0}R_{\Sigma}^1\to M_{\Sigma}\otimes_{R_{\Sigma},p_1}R_{\Sigma}^1.\]
  Here, $n\in\bN_*=\bN\cup\{\infty\}$ and for any $p$-complete $R$-algebra $S$ (for example, $S = R_{\Sigma}$),  $S/p^{n+1}:=S$ when $n=\infty$.
  We want to find a necessary and sufficient condition for $(M_{\Sigma},\varepsilon_{M_{\Sigma}})$ to be an object of $\Strat(R_{\Sigma}^{\bullet}/p^{n+1})$.
  Write 
  \[\varepsilon_{M_{\Sigma}} = \sum_{\underline i_1,\dots,\underline i_n\in\bN^d}\theta_{\underline i_1,\dots,\underline i_h}(\underline X^{(1)}_1)^{[\underline i_1]}\cdots(\underline X^{(h)}_1)^{[\underline i_h]}.\]
  As in \S\ref{ssec:local equivalence}, we have that
  \begin{align*}
      &\begin{aligned}
          \bullet \quad p_2^*(\varepsilon_{M_{\Sigma}})\circ p_0^*(\varepsilon_{M_{\Sigma}}) = {} & \sum_{\underline i_1,\underline j_1,\dots,\underline i_{h},\underline j_{h}\in\bN^d} \theta_{\underline j_1,\dots,\underline j_{h}}\circ\underline \theta_{\underline i_1,\dots,\underline i_{h}}\prod_{t=1}^{h}((\underline X^{(t)}_1)^{[\underline j_t]}(\underline X^{(t)}_2-\underline X^{(t)}_1)^{[\underline i_t]})\\
          = & \sum_{\underline i_1,\underline j_1,\underline k_1,\dots,\underline i_{h},\underline j_{h},\underline k_{h}\in\bN^d}\theta_{\underline j_1,\dots,\underline j_{h}}\circ\theta_{\underline i_1+\underline k_1,\dots,\underline i_{h}+\underline k_{h}}\prod_{t=1}^{h}((-1)^{|\underline i_t|}(\underline X^{(t)}_1)^{[\underline i_t]}(\underline X^{(t)}_1)^{[\underline j_t]}(\underline X^{(t)}_2)^{[\underline k_t]}),
      \end{aligned}\\
      &\begin{aligned}
          \bullet\quad p_1^*(\varepsilon_{M_{\Sigma}}) = \sum_{\underline k_1,\dots,\underline k_{h}\in\bN^d}\theta_{\underline k_1,\dots,\underline k_{h}}(\underline X^{(1)}_1)^{[\underline k_1]}\cdots(\underline X^{({h})}_2)^{[\underline k_{h}]},
      \end{aligned}\\
      &\begin{aligned}
          \bullet\quad\sigma_0^*(\varepsilon_{M_{\Sigma}}) = \theta_{\underline 0,\dots,\underline 0}.
      \end{aligned}
  \end{align*}
  Similar to Lemma \ref{lem:p_2p_0=p_1}, we have the following result.
  \begin{lem}\label{lem:p_2p_0=p_1 R_Sigma}
      Keep notations as above. The following are equivalent:
      \begin{enumerate}
          \item[(1)] We have $(M_{\Sigma},\varepsilon_{M_{\Sigma}}=\sum_{\underline i_1,\dots,\underline i_h\in\bN^d}\theta_{\underline i_1,\dots,\underline i_n}(\underline X^{(1)}_1)^{[\underline i_1]}\cdots(\underline X^{(h)}_1)^{[\underline i_h]})\in\Strat(R^{\bullet}_{\Sigma}/p^{n+1})$.

          \item[(2)] There are topologically quasi-nilpotent $\{\phi_i^{(t)}\}_{1\leq i\leq d,1\leq t\leq h}$ in $\End_A(M_{\Sigma})$ commuting with each others such that for any $\underline i_1,\dots,\underline i_h\in\bN^d$, we have
          $\theta_{\underline i_1,\dots,\underline i_h} = (\underline \phi^{(1)})^{\underline i_1}\cdots(\underline \phi^{(h)})^{\underline i_h}$, where for any $1\leq j\leq h$ and $\underline i_{j} = (i_{j,1},\dots,i_{j,d})\in \bN^d$, 
          \[(\underline \phi^{(j)})^{\underline i_j} = (\phi_1^{(j)})^{i_{j,1}}\cdots (\phi_d^{(j)})^{i_{j,d}}.\]
          In particular, we have
          \[\varepsilon_{M_{\Sigma}} = \exp(\sum_{t=1}^h\sum_{i=1}^d\phi_i^{(t)}X_{1,i}^{(t)}).\]
      \end{enumerate}
  \end{lem}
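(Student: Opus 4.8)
The plan is to run the arguments of Lemma \ref{lem:p_2p_0=p_1} and Lemma \ref{lem:solve p_2p_0=p_1} (equivalently Corollary \ref{cor:stratification}) one group of variables at a time. By Lemma \ref{lem:coproduct of R_Sigma}(1)--(2), for each $1\le l\le n$ the variables $\underline X_j^{(l)}$ transform under the face and degeneracy maps by exactly the formulas (\ref{equ:face-I}) and (\ref{equ:degeneracy-I}), so the three expansions of $p_2^*(\varepsilon_{M_\Sigma})\circ p_0^*(\varepsilon_{M_\Sigma})$, $p_1^*(\varepsilon_{M_\Sigma})$ and $\sigma_0^*(\varepsilon_{M_\Sigma})$ displayed above are available as stated. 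For $1\le t\le n$ and $1\le i\le d$ I set $\phi_i^{(t)}:=\theta_{\underline 0,\dots,\underline 1_i,\dots,\underline 0}\in\End_A(M_\Sigma)$, with $\underline 1_i$ in the $t$-th slot.

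For $(1)\Rightarrow(2)$: the condition $\sigma_0^*(\varepsilon_{M_\Sigma})=\id$ gives $\theta_{\underline 0,\dots,\underline 0}=\id$. Next, fixing $t$, I compare in the cocycle identity $p_2^*(\varepsilon_{M_\Sigma})\circ p_0^*(\varepsilon_{M_\Sigma})=p_1^*(\varepsilon_{M_\Sigma})$ the coefficients of the monomial $X_{i,1}^{(t)}\prod_{s=1}^n(\underline X_2^{(s)})^{[\underline m_s]}$; as in the proof of Lemma \ref{lem:solve p_2p_0=p_1} only the two terms in which the variables $\underline X_1^{(\bullet)}$ appear in total degree one survive on the left, while the right has no such variables, which yields the recursion $\theta_{\underline m_1,\dots,\underline m_t+\underline 1_i,\dots,\underline m_n}=\phi_i^{(t)}\circ\theta_{\underline m_1,\dots,\underline m_n}$. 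Feeding $\theta_{\underline 0,\dots,\underline 0}=\id$ into this recursion in the two possible orders forces $\phi_i^{(s)}$ and $\phi_j^{(t)}$ to commute for all $s,t,i,j$, and iterating it gives $\theta_{\underline i_1,\dots,\underline i_n}=(\underline\phi^{(1)})^{\underline i_1}\cdots(\underline\phi^{(n)})^{\underline i_n}$, hence $\varepsilon_{M_\Sigma}=\exp(\sum_{t=1}^n\sum_{i=1}^d\phi_i^{(t)}X_{i,1}^{(t)})$; topological quasi-nilpotence of each $\phi_i^{(t)}$ then follows from $(\phi_i^{(t)})^k=\theta_{\underline 0,\dots,k\underline 1_i,\dots,\underline 0}\to 0$ as $k\to\infty$.

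For $(2)\Rightarrow(1)$: I substitute $\theta_{\underline i_1,\dots,\underline i_n}=\prod_t(\underline\phi^{(t)})^{\underline i_t}$ into the displayed expansion of $p_2^*(\varepsilon_{M_\Sigma})\circ p_0^*(\varepsilon_{M_\Sigma})$ and collapse each slot separately by the vanishing identity $\sum_{\underline k+\underline l=\underline h}(-1)^{|\underline l|}(\underline X_1^{(t)})^{[\underline k]}(\underline X_1^{(t)})^{[\underline l]}=0$ for $|\underline h|\ge 1$ used in the proof of Lemma \ref{lem:solve p_2p_0=p_1}; what is left is exactly $p_1^*(\varepsilon_{M_\Sigma})$, and $\sigma_0^*(\varepsilon_{M_\Sigma})=\id$ because $\sigma_0$ annihilates every $X_{i,1}^{(t)}$. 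Convergence of the sum defining $\varepsilon_{M_\Sigma}$, i.e.\ $\theta_{\underline i_1,\dots,\underline i_n}\to 0$ as $|\underline i_1|+\cdots+|\underline i_n|\to\infty$, follows from the topological quasi-nilpotence and mutual commutativity of the finite family $\{\phi_i^{(t)}\}$ on the finite projective module $M_\Sigma/p^n$. Since every step is the multi-index combinatorics already performed in \S\ref{ssec:local equivalence}, nothing is genuinely hard here; the one point demanding care is the coefficient extraction in $(1)\Rightarrow(2)$, where one must be sure the intended monomials are matched across the $n$ slots --- this becomes transparent upon restricting to one slot at a time (set $\underline X^{(s)}=0$ for $s\ne t$, under which the cocycle identity reduces to the single-slot identity of Lemma \ref{lem:p_2p_0=p_1}).
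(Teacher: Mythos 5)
Your proposal is correct and follows essentially the same route as the paper, which simply invokes ``the same argument as the proof of Lemma~\ref{lem:solve p_2p_0=p_1}''; you have spelled out that argument slot by slot (coefficient extraction of the degree-one monomials in $\underline X_1^{(\bullet)}$ for $(1)\Rightarrow(2)$, and the per-slot vanishing identity for $(2)\Rightarrow(1)$), which is exactly what is intended.
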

  \begin{proof}
     The result follows from the same argument for the proof of Lemma \ref{lem:solve p_2p_0=p_1}.
  \end{proof}
    Keep notations of Lemma \ref{lem:p_2p_0=p_1 R_Sigma}. For any $1\leq t\leq h$, we define 
    \begin{equation}\label{equ:nabla_Sigma^t}
        \nabla_{M_{\Sigma}}^{(t)}:=-\sum_{i=1}^d\phi_i^{(t)}\otimes\frac{\rd T_i^{(t)}}{p}:M_{\Sigma}\to M_{\Sigma}\otimes_{R_t}\widehat \Omega^1_{R_t}\{-1\}
    \end{equation}
    and define 
    \begin{equation}\label{equ:nabla_Sigma}
        \nabla_{M_{\Sigma}}:=(\nabla_{M_{\Sigma}}^{(1)},\dots,\nabla_{M_{\Sigma}}^{(h)}):M_{\Sigma}\to \oplus_{t=1}^nM_{\Sigma}\otimes_{R_t}\widehat \Omega^1_{R_t}\{-1\} = M_{\Sigma}\otimes_{R_{\Sigma}}\widehat \Omega_{R_{\Sigma}}^1\{-1\}.
    \end{equation}
    As Lemma \ref{lem:stratification vs p-connection}, we will see $(M_{\Sigma},\nabla_{M_{\Sigma}})$ is a \emph{topologically quasi-nilpotent flat connection with respect to $\rd_{\Sigma}$}. Let us first introduce the following definition.
    \begin{dfn}\label{dfn:p-connection wrt d_Sigma}
        By a \emph{connection with respect to $\rd_{\Sigma}$} (\ref{equ:d_Sigma}) over $R_{\Sigma}/p^{n+1}$, we mean a finite projective $R_{\Sigma}/p^{n+1}$-module $M_{\Sigma}$ together with an $A$-linear morphism 
        \[\nabla_{M_{\Sigma}}=(\nabla_{M_{\Sigma}}^{(1)},\dots,\nabla_{M_{\Sigma}}^{(h)}):M_{\Sigma}\to \oplus_{t=1}^hM_{\Sigma}\otimes_{R_t}\widehat \Omega^1_{R_t}\{-1\} = M_{\Sigma}\otimes_{R_{\Sigma}}\widehat \Omega_{R_{\Sigma}}^1\{-1\}\]
        satisfying the following \emph{Leibniz rule}: For any $f\in R_{\Sigma}$ and any $x\in M_{\Sigma}$, we have
        \[\nabla_{M_{\Sigma}}(fx) = f\nabla_{M_{\Sigma}}(x)+\rd_{\Sigma}(f)\otimes x.\]
        We say a connection $(M_{\Sigma},\nabla_{M_{\Sigma}})$ with respect to $\rd_{\Sigma}$ is \emph{flat}, if $\nabla_{M_{\Sigma}}\wedge\nabla_{M_{\Sigma}} = 0$. A flat connection $(M_{\Sigma},\nabla_{M_{\Sigma}})$ is called \emph{topologically quasi-nilpotent}, if $\lim_{m\to+\infty}\nabla_{M_{\Sigma}}^m = 0$.
        When $(M_{\Sigma},\nabla_{M_{\Sigma}})$ is flat, we denote by $\rD\rR(M_{\Sigma},\nabla_{M_{\Sigma}})$ its induced \emph{de Rham complex}:
    \[M_{\Sigma}\xrightarrow{\nabla_{M_{\Sigma}}}M_{\Sigma}\otimes_{R_{\Sigma}}\widehat \Omega^1_{R_{\Sigma}}\{-1\}\xrightarrow{\nabla_{M_{\Sigma}}}\cdots \xrightarrow{\nabla_{M_{\Sigma}}}M_{\Sigma}\otimes_{R_{\Sigma}}\widehat \Omega^{hd}_{R_{\Sigma}}\{-hd\}.\]
    Denote by $\MIC^{\rm tqn}(R_{\Sigma}/p^{n+1},\rd_{\Sigma})$ the category of topologically quasi-nilpotent flat connections over $R_{\Sigma}/p^{n+1}$ with respect to $\rd_{\Sigma}$.
    \end{dfn}

    \begin{rmk}\label{rmk:p-connection is connection}
        When $\Sigma = \{\delta\}$, a connection with respect to $\rd_{\Sigma}$ over $R_{\Sigma}/p^{n+1}$ is exactly a $p$-connection with respect to $\rd$ over $R/p^{n+1}$ as described in Definition \ref{dfn:p-connection}. Indeed, in this case, we have $R_{\Sigma} = R$, $\widehat \Omega^1_{R_{\Sigma}}\{-1\} = \widehat \Omega^1_R\{-1\}$, and $\rd_{\Sigma} = p\rd:R\to \widehat \Omega^1_{R}\{-1\}$. So we get the desired compatibility by noting that a connection with respect to $p\rd$ is exactly a $p$-connection with respect to $\rd$.
    \end{rmk}
    
  \begin{lem}\label{lem:connection with respect to d_Sigma}
      Keep notations as above. For any $(M_{\Sigma},\varepsilon_{M_{\Sigma}})\in \Strat(R_{\Sigma}^{\bullet}/p^{n+1})$, the pair $(M_{\Sigma},\nabla_{M_{\Sigma}})$ defined by (\ref{equ:nabla_Sigma^t}) and (\ref{equ:nabla_Sigma}) is a topologically quasi-nilpotent flat connection with respect to $\rd_{\Sigma}$ over $R_{\Sigma}/p^{n+1}$. 
  \end{lem}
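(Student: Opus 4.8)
The plan is to replay, now in the presence of several $\delta$-structures, the reasoning behind Construction~\ref{const:intrinsic} and Lemmas~\ref{lem:p_2p_0=p_1}, \ref{lem:solve p_2p_0=p_1} and \ref{lem:stratification vs p-connection}. By Lemma~\ref{lem:p_2p_0=p_1 R_Sigma} I may assume $\varepsilon_{M_{\Sigma}}=\exp\big(\sum_{t=1}^n\sum_{i=1}^d\phi_i^{(t)}X_{i,1}^{(t)}\big)$ with the $\phi_i^{(t)}\in\End_A(M_{\Sigma})$ topologically quasi-nilpotent and pairwise commuting, so that $\theta_{\underline i_1,\dots,\underline i_n}=(\underline\phi^{(1)})^{\underline i_1}\cdots(\underline\phi^{(n)})^{\underline i_n}$ and the operator defined by (\ref{equ:nabla_Sigma^t}) and (\ref{equ:nabla_Sigma}) is $\nabla_{M_{\Sigma}}=-\sum_{t=1}^n\sum_{i=1}^d\phi_i^{(t)}\otimes\frac{\rd T_i^{(t)}}{p}$.

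The first step is the Leibniz rule, which I would establish through an intrinsic description of $\nabla_{M_{\Sigma}}$ exactly as in Construction~\ref{const:intrinsic}. Put $J_{\pd}:=\Ker(\sigma_0\colon R_{\Sigma}^1\to R_{\Sigma})$. By Lemma~\ref{lem:coproduct of R_Sigma} (and its item~(1)) this is the $p$-complete pd-ideal of $R_{\Sigma}^1\cong R_{\Sigma}[\underline X_1^{(i)}\mid 1\le i\le n]^{\wedge}_{\pd}$ generated by the $X_{r,1}^{(i)}$, so $J_{\pd}/J_{\pd}^{[2]}$ is $R_{\Sigma}$-free of rank $nd$, and I would identify it with $\widehat\Omega^1_{R_{\Sigma}}\{-1\}$ by sending $\overline{X_{r,1}^{(i)}}$ to $\frac{\rd T_r^{(i)}}{p}$. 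The key bookkeeping point is that, using the formulas $p_0(\underline T_0^{(j)})=\underline T_0^{(j)}-p\underline X_1^{(j)}$ and $p_0(\underline Y_j)=\underline Y_j-\underline X_1^{(1)}+\underline X_1^{(j+1)}$ of Lemma~\ref{lem:coproduct of R_Sigma}(3), one checks $\overline{p_1(f)-p_0(f)}=\rd_{\Sigma}(f)$ for $f\in R_{\Sigma}$: the twist by $p$ on the coordinates $T_r^{(1)}$ and the absence of a twist on the pd-variables $Y_r^{(i)}$ are precisely what goes into the definition of $\rd_{\Sigma}$ (Convention~\ref{conv:R_Sigma}). Since $\sigma_0^*(\varepsilon_{M_{\Sigma}})=\theta_{\underline 0,\dots,\underline 0}=\id_{M_{\Sigma}}$, the map $\iota_{M_{\Sigma}}-\varepsilon_{M_{\Sigma}}$ takes values in $M_{\Sigma}\otimes_{R_{\Sigma},p_1}J_{\pd}$ and reduces modulo $J_{\pd}^{[2]}$ to $-\sum_{t,i}\phi_i^{(t)}\otimes\overline{X_{i,1}^{(t)}}$, i.e. to $\nabla_{M_{\Sigma}}$; then, as in the proof of Lemma~\ref{lem:stratification vs p-connection},
\[
\nabla_{M_{\Sigma}}(fx)=\overline{p_1(f)\iota_{M_{\Sigma}}(x)-p_0(f)\varepsilon_{M_{\Sigma}}(x)}=\overline{p_1(f)-p_0(f)}\otimes x+f\,\nabla_{M_{\Sigma}}(x)=\rd_{\Sigma}(f)\otimes x+f\,\nabla_{M_{\Sigma}}(x),
\]
which is the Leibniz rule of Definition~\ref{dfn:p-connection wrt d_Sigma}.

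Flatness and topological quasi-nilpotence are then immediate consequences of Lemma~\ref{lem:p_2p_0=p_1 R_Sigma}(2), by the same computation as in Lemmas~\ref{lem:p_2p_0=p_1} and \ref{lem:solve p_2p_0=p_1}. Expanding $\nabla_{M_{\Sigma}}\circ\nabla_{M_{\Sigma}}$ and using that the basis $1$-forms $\frac{\rd T_i^{(t)}}{p}$ are $\rd_{\Sigma}$-closed, the coefficient of $\frac{\rd T_j^{(s)}}{p}\wedge\frac{\rd T_i^{(t)}}{p}$ is $\phi_j^{(s)}\circ\phi_i^{(t)}-\phi_i^{(t)}\circ\phi_j^{(s)}$, which vanishes by commutativity; hence $\nabla_{M_{\Sigma}}\wedge\nabla_{M_{\Sigma}}=0$. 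Finally, a monomial term of $\nabla_{M_{\Sigma}}^N$ involves an operator $(\underline\phi^{(1)})^{\underline i_1}\cdots(\underline\phi^{(n)})^{\underline i_n}=\theta_{\underline i_1,\dots,\underline i_n}$ with $\sum_t|\underline i_t|=N$, which tends to $0$ as $N\to\infty$ by the nilpotence assumption $\lim_{|\underline m|\to\infty}\theta_{\underline m}=0$ on $\varepsilon_{M_{\Sigma}}$; so $(M_{\Sigma},\nabla_{M_{\Sigma}})$ is topologically quasi-nilpotent. I expect the only genuinely delicate point to be the identification of $J_{\pd}/J_{\pd}^{[2]}$ with $\widehat\Omega^1_{R_{\Sigma}}\{-1\}$ compatibly with $\rd_{\Sigma}$, since it must simultaneously account for the ``$p$-connection'' direction coming from the $R_1$-coordinates and the honest connection direction coming from the pd-variables $Y_r^{(i)}$; once this is in place, everything reduces to the single-$\delta$ computations already carried out in \S\ref{ssec:local equivalence}.
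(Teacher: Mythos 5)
Your proof is correct and takes essentially the same route as the paper's: the paper likewise identifies $J_{\pd}/J_{\pd}^{[2]}$ with $\widehat\Omega^1_{R_{\Sigma}}\{-1\}$ as in Construction \ref{const:intrinsic}, identifies $\nabla_{M_{\Sigma}}$ with $\overline{\iota_{M_{\Sigma}}-\varepsilon_{M_{\Sigma}}}$, and then invokes the argument of Lemma \ref{lem:stratification vs p-connection}. You have merely spelled out the details (the check $\overline{p_1(f)-p_0(f)}=\rd_{\Sigma}(f)$ on the coordinates $T_r^{(1)}$ and the pd-variables $Y_r^{(i)}$, and the flatness/quasi-nilpotence from Lemma \ref{lem:p_2p_0=p_1 R_Sigma}(2)) that the paper leaves implicit.
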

  \begin{proof}
      Let $J_{\pd}$ be the kernel of the degeneracy map $\sigma_0:R^1_{\Sigma}\to R_{\Sigma}$. Then it is the $p$-complete pd-ideal of $R_{\Sigma}^1$ generated by $X^{(t)}_{\underline 1}$'s. As in Construction \ref{const:intrinsic}, there is an isomorphism of $R_{\Sigma}$-modules
      \[J_{\pd}/J_{\pd}^{[2]}\cong \widehat \Omega^1_{R_{\Sigma}}\{-1\}.\]
      If we denote by $\iota_{M_{\Sigma}}:M_{\Sigma}\to M_{\Sigma}\otimes_{R_{\Sigma},p_1}R_{\Sigma}^1$ the map sending $x$ to $x\otimes 1$, then we still have that $\nabla_{M_{\Sigma}}$ coincides with the map
      \[\overline{\iota_{M_{\Sigma}}-\varepsilon_{M_{\Sigma}}}:M_{\Sigma}\to M_{\Sigma}\otimes_{R_{\Sigma},p_1}J_{\pd}/J_{\pd}^{[2]}\cong M_{\Sigma}\otimes_{R_{\Sigma}}\widehat \Omega^1_{R_{\Sigma}}\{-1\}.\]
      Now, the lemma follows from the same argument for the proof of Lemma \ref{lem:stratification vs p-connection}.
  \end{proof}
  \begin{cor}\label{cor:connection with respect to d_Sigma}
      There are equivalence of categories
      \[\Vect((\frakX_0/A)_{\Prism},\calO_{\Prism,n})\simeq \Strat(R_{\Sigma}^{\bullet}/p^{n+1})\simeq \MIC^{\rm tqn}(R_{\Sigma}/p^{n+1},\rd_{\Sigma}).\]
  \end{cor}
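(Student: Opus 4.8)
The plan is to assemble the two claimed equivalences from the local pieces already developed, following exactly the pattern of Lemma \ref{lem:crystal vs stratification} and Corollary \ref{cor:stratification}, but now over the larger covering $(R_\Sigma,(p))$. First I would invoke that $(R_\Sigma,(p))$ is a covering of the final object of $\Sh((R_0/A)_{\Prism})$ (noted in Convention \ref{conv:R_Sigma}), so that evaluation along the associated \v Cech nerve $(R_\Sigma^\bullet,(p))$ gives an equivalence
\[
\Vect((\frakX_0/A)_{\Prism},\calO_{\Prism,n})\xrightarrow{\ \simeq\ }\Strat(R_\Sigma^\bullet/p^n),
\]
preserving ranks, tensor products and dualities; this is the verbatim analogue of Lemma \ref{lem:crystal vs stratification}, justified by \cite[Prop. 4.8]{Tia23} together with the explicit cosimplicial description $R_\Sigma^\bullet\cong R_\Sigma[\underline X_1^{(i)},\dots,\underline X_\bullet^{(i)}\mid 1\le i\le n]^\wedge_{\pd}$ from Lemma \ref{lem:coproduct of R_Sigma}.

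Second, for the equivalence $\Strat(R_\Sigma^\bullet/p^n)\simeq\MIC^{\rm tqn}(R_\Sigma^\bullet/p^n,\rd_\Sigma)$ I would run the dictionary already established in this subsection: given $(M_\Sigma,\varepsilon_{M_\Sigma})\in\Strat(R_\Sigma^\bullet/p^n)$, Lemma \ref{lem:p_2p_0=p_1 R_Sigma} produces commuting topologically quasi-nilpotent operators $\{\phi_i^{(t)}\}$ with $\varepsilon_{M_\Sigma}=\exp(\sum_{t,i}\phi_i^{(t)}X_{i,1}^{(t)})$, and then $\nabla_{M_\Sigma}$ defined by (\ref{equ:nabla_Sigma^t})--(\ref{equ:nabla_Sigma}) is, by Lemma \ref{lem:connection with respect to d_Sigma}, a topo-qnil flat connection with respect to $\rd_\Sigma$. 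Conversely, given $(M_\Sigma,\nabla_{M_\Sigma})\in\MIC^{\rm tqn}(R_\Sigma^\bullet/p^n,\rd_\Sigma)$, one writes $\nabla_{M_\Sigma}^{(t)}=-\sum_i\phi_i^{(t)}\otimes\frac{\rd T_i^{(t)}}{p}$; the flatness $\nabla_{M_\Sigma}\wedge\nabla_{M_\Sigma}=0$ forces the $\phi_i^{(t)}$ to commute with each other (noting the cross terms between different $t$'s vanish because of the identifications $\frac{\rd T_r^{(i+1)}}{p}=\frac{\rd T_r^{(1)}}{p}-\rd Y_r^{(i)}$ built into $\widehat\Omega^1_{R_\Sigma}\{-1\}$ and $\rd_\Sigma$), and topological quasi-nilpotence translates into (\ref{equ:nilpoteny})-type vanishing, so one may set $\varepsilon_{M_\Sigma}=\exp(\sum_{t,i}\phi_i^{(t)}X_{i,1}^{(t)})$. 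One then checks the cocycle condition exactly as in Lemma \ref{lem:p_2p_0=p_1} combined with Lemma \ref{lem:solve p_2p_0=p_1}, using the explicit face maps $p_i$ from Lemma \ref{lem:coproduct of R_Sigma}: the crucial algebraic identity is again $\sum_{\underline k+\underline l=\underline h}(-1)^{|\underline l|}\underline\phi^{\underline k}\underline\phi^{\underline l}\underline X^{[\underline k]}\underline X^{[\underline l]}=0$ for $|\underline h|\ge 1$, applied coordinatewise in each of the $n$ blocks. These two constructions are mutually inverse by the uniqueness of the expansion of $\varepsilon_{M_\Sigma}$ in the pd-variables.

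The one point that genuinely needs care — and which I expect to be the main obstacle — is the interaction of the $p_0$ face map with the $R_\Sigma$-linear structure. Unlike the situation of Corollary \ref{cor:key}, here $p_0$ is \emph{not} $R_\Sigma$-linear: by Lemma \ref{lem:coproduct of R_Sigma}(3) it moves $\underline T_0^{(j)}\mapsto\underline T_0^{(j)}-p\underline X_1^{(j)}$ and hence $\underline Y_j\mapsto\underline Y_j-\underline X_1^{(1)}+\underline X_1^{(j+1)}$. So when one restricts $\varepsilon_{M_\Sigma}$ to $M_\Sigma$ and computes $p_0^*(\varepsilon_{M_\Sigma})$, one must carry along the extra contribution coming from how $p_0$ acts on the coefficients $\theta_{\underline i_1,\dots,\underline i_n}\in\End_A(M_\Sigma)$, which are $R_\Sigma$-linear combinations of the $\underline\phi^{(t)}$'s. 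The verification that this bookkeeping is consistent — i.e.\ that the Leibniz rule in Definition \ref{dfn:p-connection wrt d_Sigma} with the \emph{correct} $\rd_\Sigma$ (which by Convention \ref{conv:R_Sigma} sends $T_r^{(1)}\mapsto p\,\rd T_r^{(1)}$ and $Y_r^{(i)}\mapsto \rd Y_r^{(i)}$) is exactly what matches the cosimplicial structure of $\varepsilon_{M_\Sigma}$ — is the heart of the argument, but it is a direct transcription of the computation in the proof of Lemma \ref{lem:bicomplex} (cf.\ the treatment of the case $f(0)\neq 0$ there). Granting this, the composite of the two displayed equivalences gives the corollary; compatibility with ranks, tensor products and dualities is immediate from the linear-algebra nature of all the functors involved.
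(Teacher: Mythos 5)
Your proposal is correct and follows essentially the same route as the paper: the first equivalence comes from the covering property of $(R_{\Sigma},(p))$ together with \cite[Prop. 4.8]{Tia23}, and the second from Lemma \ref{lem:p_2p_0=p_1 R_Sigma} combined with Lemma \ref{lem:connection with respect to d_Sigma}. The extra bookkeeping you flag around the $p_0$ face map is already absorbed into those two lemmas, so it is not an additional obstacle.
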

  \begin{proof}
      As $(R_{\Sigma},(p))$ is a covering of the final object of $\Sh((\frakX_0/A)_{\Prism})$, the first equivalence follows from \cite[Prop. 4.8]{Tia23}. The second equivalence follows from Lemma \ref{lem:p_2p_0=p_1 R_Sigma} together with Lemma \ref{lem:connection with respect to d_Sigma}.
  \end{proof}
  \begin{rmk}[Compatibility with $\Sigma$]\label{rmk:functorial in Sigma}
      Let $\Sigma$ and $\Sigma^{\prime}$ be two sets of $\delta$-structures satisfying the condition $(\ast)$ in Convention \ref{conv:R_Sigma}. Assume $\Sigma$ is a subset of $\Sigma^{\prime}$. Then there is a canonical morphism of cosimiplicial $A$-algebras $R_{\Sigma}^{\bullet}\to R_{\Sigma^{\prime}}^{\bullet}$. Note that for any $(M_{\Sigma},\varepsilon_{M_{\Sigma}})\in \Strat(R_{\Sigma}^{\bullet}/p^{n+1})$ with the associated $\bM\in \Vect((\frakX_0/A)_{\Prism},\calO_{\Prism,n})$ via the equivalence in Corollary \ref{cor:connection with respect to d_Sigma}, the base-change
      \[(M_{\Sigma^{\prime}}:=M_{\Sigma}\otimes_{R_{\Sigma}}R_{\Sigma^{\prime}},\varepsilon_{M^{\prime}}:=\varepsilon_{M_{\Sigma^{\prime}}}\otimes\id_{R_{\Sigma^{\prime}}^1})\in \Strat(R_{\Sigma^{\prime}}^{\bullet}/p^{n+1})\]
      is associated to $\bM$ via the equivalence in Corollary \ref{cor:connection with respect to d_Sigma} (for $\Sigma^{\prime}$ instead of $\Sigma$). So if $(M_{\Sigma},\nabla_{M_{\Sigma}})\in \MIC^{\rm tqn}(R_{\Sigma}/p^{n+1},\rd_{\Sigma})$ is the connection with respect to $\rd_{\Sigma}$ associated to $\bM$, then 
      \[(M_{\Sigma^{\prime}} = M_{\Sigma}\otimes_{R_{\Sigma}}R_{\Sigma^{\prime}},\nabla_{M_{\Sigma^{\prime}}} = \nabla_{M_{\Sigma}}\otimes\id_{R_{\Sigma}^{\prime}}+\id_{M_{\Sigma}}\otimes\rd_{\Sigma^{\prime}})\] 
      is the object in $\MIC^{\rm tqn}(R_{\Sigma^{\prime}}/p^{n+1},\rd_{\Sigma^{\prime}})$ associated to $\bM$ as well. 
  \end{rmk}

  Now, we follow the strategy in \S\ref{ssec:cohomological comparison} to show there is an explicitly constructed quasi-isomorphism of complexes of $A$-modules
  \[\rho_{\Sigma}(\bM):\rR\Gamma((\frakX_0/A)_{\Prism},\bM)\xrightarrow{\simeq} \rD\rR(M_{\Sigma},\nabla_{M_{\Sigma}})\]
  for any $\bM\in \Vect((\frakX_0/A)_{\Prism},\calO_{\Prism,n})$ with the associated $(M_{\Sigma},\nabla_{M_{\Sigma}})\in \MIC^{\rm tqn}(R_{\Sigma}/p^{n+1},\rd_{\Sigma})$.

  For any $m\geq 1$, define 
  \[\widehat \Omega^1_{R^m_{\Sigma}}:=\widehat \Omega^1_{R_{\Sigma}}\{-1\}\otimes_{R_{\Sigma}}R_{\Sigma}^m\oplus\widehat \Omega^1_{R_{\Sigma}^m/R_{\Sigma}}\]
  and then we get $\widehat \Omega^1_{R^{\bullet}_{\Sigma}}$ by letting $m$ vary.
  The same argument in the paragraph around (\ref{equ:identification on differential}) implies that $\widehat \Omega^1_{R_{\Sigma}^{\bullet}}$ is a cosimiplicial $R_{\Sigma}^{\bullet}$-module. For any $j\geq 1$, let $\widehat \Omega^j_{R_{\Sigma}^{\bullet}}$ be the $j$-fold wedge product of $\widehat \Omega^1_{R_{\Sigma}^{\bullet}}$.
  \begin{lem}\label{lem:zero-homotopy}
      For any $j\geq 1$, the cosimiplicial $R_{\Sigma}^{\bullet}$-module $\widehat \Omega^j_{R_{\Sigma}^{\bullet}}$ is homotopic to zero.
  \end{lem}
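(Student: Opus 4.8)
The plan is to reduce the claim to the known case of a single $\delta$-structure, i.e.\ to Lemma \ref{lem:d_R is cosimplicial} applied in a suitably enlarged coordinate system, or more precisely to the fact (used already in \S\ref{ssec:cohomological comparison}) that for a single $\delta$-structure the cosimplicial module $\widehat\Omega^j_{R^{\bullet}}$ is homotopic to zero (this is \cite[Lem. 5.15]{Tia23}, invoked in the paragraph above Lemma \ref{lem:d_R is cosimplicial}). The key point is that by Lemma \ref{lem:coproduct of R_Sigma} we have an explicit description
\[
R^{\bullet}_{\Sigma}\cong R_{\Sigma}[\underline X_1^{(i)},\dots,\underline X_{\bullet}^{(i)}\mid 1\le i\le n]^{\wedge}_{\pd},
\]
and correspondingly $\widehat\Omega^1_{R^m_{\Sigma}}$ has the $R_{\Sigma}$-basis consisting of $\frac{\rd T_r^{(t)}}{p}$ for $1\le r\le d$, $1\le t\le n$, together with $\rd X_{r,j}^{(i)}$ for $1\le r\le d$, $1\le i\le n$, $1\le j\le m$. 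So the cosimplicial structure on $\widehat\Omega^1_{R^{\bullet}_{\Sigma}}$ is, up to reindexing, a direct sum of $n$ copies of the single-$\delta$-structure situation (one copy for each $t$), glued along the shared base $\widehat\Omega^1_{R_{\Sigma}}\{-1\}$; the face and degeneracy maps act on the $\rd X_{r,j}^{(i)}$ by exactly the formulas (\ref{equ:face-I}) and (\ref{equ:degeneracy-I}) of Corollary \ref{cor:key}(1) and $R_{\Sigma}$-linearly in each $t$-block modulo the mixing coming from $p_0$ (cf.\ Lemma \ref{lem:coproduct of R_Sigma}(3), which twists $\underline T_0^{(j)}$ by $-p\underline X_1^{(j)}$).

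The steps I would carry out, in order, are: (i) record the explicit $R_{\Sigma}$-basis of $\widehat\Omega^1_{R^m_{\Sigma}}$ and hence of $\widehat\Omega^j_{R^m_{\Sigma}}$ via wedge products, and write out the face/degeneracy maps on these generators using Lemma \ref{lem:coproduct of R_Sigma}; (ii) observe that because $p$ is nilpotent on $R_{\Sigma}/p^n$ (or working $p$-completely), the $-p\underline X_1^{(j)}$ twist in $p_0$ does not obstruct the construction of a homotopy — indeed $p\,\rd X$ terms lie in the augmentation-type ideal and the same contracting-homotopy formula from the proof of \cite[Lem. 5.15]{Tia23} still produces a degreewise homotopy; (iii) exhibit the explicit $R_{\Sigma}$-linear contracting homotopy $h^m:\widehat\Omega^j_{R^m_{\Sigma}}\to\widehat\Omega^j_{R^{m-1}_{\Sigma}}$ as in loc.\ cit.\ (essentially the ``last coordinate'' homotopy that kills the top index $X^{(i)}_m$), and verify $\rd_1\circ h + h\circ\rd_1 = \id$ on the normalized complex, where $\rd_1$ is the alternating sum of face maps; (iv) conclude $\widehat\Omega^j_{R^{\bullet}_{\Sigma}}$ is homotopic to zero. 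Alternatively, and perhaps more cleanly, I would note that $R^{\bullet}_{\Sigma}$ is the \v Cech nerve of a cover (Convention \ref{conv:R_Sigma}), so $\widehat\Omega^1_{R^{\bullet}_{\Sigma}}$ is an augmented cosimplicial module over that nerve that is, termwise, induced up from the base $R_{\Sigma}$ by a free (polynomial-pd) extension, and such induced objects on a \v Cech nerve are always homotopic to zero by the standard extra-degeneracy argument.

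The main obstacle I anticipate is bookkeeping rather than conceptual: keeping track of the mixing between the $\frac{\rd T_r^{(t)}}{p}$ part of the differentials and the $\rd X_{r,j}^{(i)}$ part under $p_0$, since Lemma \ref{lem:coproduct of R_Sigma}(3) shows $p_0$ is \emph{not} $R_{\Sigma}$-linear (it sends $\underline T_0^{(j)}\mapsto \underline T_0^{(j)}-p\underline X_1^{(j)}$, hence sends $\frac{\rd T^{(j)}_{r,0}}{p}$ to $\frac{\rd T^{(j)}_{r,0}}{p}-\rd X^{(j)}_{r,1}$). One has to check that the homotopy can be chosen to interact correctly with this twist — this is exactly the phenomenon already handled in the single-$\delta$ case via the identification (\ref{equ:identification on differential}) and in Lemma \ref{lem:d_R is cosimplicial}, so I expect the verification to go through verbatim after replacing the single index by the family $\{(i,j)\}$. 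I would therefore phrase the proof as: ``The same argument as in the proof of \cite[Lem.\ 5.15]{Tia23} (using the explicit description of $R^{\bullet}_{\Sigma}$ and its differentials from Lemma \ref{lem:coproduct of R_Sigma}) produces an explicit contracting homotopy,'' and include only the formula for the homotopy and the one-line check of the twist compatibility.
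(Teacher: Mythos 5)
Your proposal is correct and matches the paper's argument: the paper likewise decomposes $\widehat\Omega^1_{R_\Sigma^{\bullet}}$ as the direct sum over $t\in\{1,\dots,n\}$ of the base changes $\widehat\Omega^1_{R_t^{\bullet}}\otimes_{R_t^{\bullet}}R_\Sigma^{\bullet}$ along the canonical maps $R_t^{\bullet}\to R_\Sigma^{\bullet}$, and then invokes \cite[Lem.~5.15]{Tia23} for each summand rather than re-deriving the contracting homotopy. The only difference is presentational: you propose to write out the homotopy explicitly, while the paper simply transports the known null-homotopy through the base change.
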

  \begin{proof}
      It suffices to prove the $j=1$ case.
      For any $1\leq t\leq n$, let $(R_t^{\bullet},(p))$ be the cosimplicial prism representing the \v Cech nerve associated to the covering $(R_t,(p))$ of the final object of $\Sh((R_0/A)_{\Prism})$. Then there is a canonical morphism $R_t^{\bullet}\to R_{\Sigma}^{\bullet}$ of comsimplicial $A$-algebras.
      By construction, we have an isomorphism of cosimplicial $R_{\Sigma}^{\bullet}$-modules
      \[\oplus_{t=n}^d\widehat \Omega^1_{R_t^{\bullet}}\otimes_{R^{\bullet}_t}R_{\Sigma}^{\bullet}\cong \widehat \Omega^1_{R_{\Sigma}^{\bullet}}.\]
      As each $\widehat \Omega^1_{R_t^{\bullet}}$ is homotopic to zero by \cite[Lem. 5.15]{Tia23}, so is $\widehat \Omega^1_{R_{\Sigma}^{\bullet}}$. 
  \end{proof}
  Now, for any $j\geq 0$, we consider the $A$-linear morphism $\rd_{R_{\Sigma}}:\widehat \Omega^j_{R_{\Sigma}^m}\to \widehat \Omega^{j+1}_{R_{\Sigma}^m}$ satisfying $\rd_{R_{\Sigma}}\wedge\rd_{R_{\Sigma}} = 0$ defined as follows: Write
  \[\widehat \Omega^j_{R_{\Sigma}^m} =\bigoplus_{k=0}^j \widehat \Omega^k_{R_{\Sigma}}\{-k\}\otimes_{R_{\Sigma}}\widehat \Omega^{j-k}_{R_{\Sigma}^m/R_{\Sigma}},\]
  and then for any $\omega_k\in \widehat \Omega^k_{R_{\Sigma}}\{-k\}$ and $\eta_{j-k}\in \widehat \Omega^{j-k}_{R_{\Sigma}^m/R_{\Sigma}}$, put
  \[\rd_{R_{\Sigma}}(\omega_k\otimes\eta_{j-k}) = \rd_{\Sigma}(\omega_k)\otimes \eta_{j-k}+(-1)^k\omega_k\otimes\rd(\eta_{j-k}),\]
  where $\rd:\widehat \Omega^{j-k}_{R_{\Sigma}^m/R_{\Sigma}}\to \widehat \Omega^{j-k+1}_{R_{\Sigma}^m/R_{\Sigma}}$ is the usual  differential on $\widehat \Omega^{j-k}_{R_{\Sigma}^m/R_{\Sigma}}$ and $\rd_{\Sigma}$ is given by (\ref{equ:d_Sigma}).
  We remark that the $\rd_{R_{\Sigma}}$ coincides with $\rd_{R}$ in \S\ref{ssec:cohomological comparison} when $\Sigma = \{\delta\}$ (cf. Remark \ref{rmk:p-connection is connection}).
  \begin{lem}\label{lem:d_Sigma is cosimplicial}
      For any $j\geq 1$, the $\rd_{R_{\Sigma}}:\widehat \Omega^j_{R_{\Sigma}^{\bullet}}\to \widehat \Omega^{j+1}_{R_{\Sigma}^{\bullet}}$ is a morphism of cosimplicial $A$-modules.
  \end{lem}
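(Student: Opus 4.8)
The plan is to reduce \textbf{Lemma \ref{lem:d_Sigma is cosimplicial}} to the single-$\delta$ case already established in \textbf{Lemma \ref{lem:d_R is cosimplicial}}, exploiting the decomposition of $\widehat\Omega^1_{R_\Sigma^\bullet}$ into pieces coming from the individual prisms $(R_t,(p))$. Concretely, for each $1\leq t\leq n$ the canonical map of \v Cech nerves $R_t^\bullet\to R_\Sigma^\bullet$ gives, as in the proof of \textbf{Lemma \ref{lem:zero-homotopy}}, an isomorphism of cosimplicial $R_\Sigma^\bullet$-modules $\bigoplus_{t=1}^n\widehat\Omega^1_{R_t^\bullet}\otimes_{R_t^\bullet}R_\Sigma^\bullet\xrightarrow{\cong}\widehat\Omega^1_{R_\Sigma^\bullet}$, and hence on $j$-fold wedges a description of $\widehat\Omega^j_{R_\Sigma^\bullet}$ in terms of sums of tensor products of the $\widehat\Omega^{k_t}_{R_t^\bullet}$'s. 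Under this identification $\rd_{R_\Sigma}$ is, by its very definition, the graded derivation built from the $\rd_{R_t}$'s on the $R_t$-pieces and from the relative differential $\rd$ on $\widehat\Omega^\bullet_{R_\Sigma^m/R_\Sigma}$; so it suffices to check that each of these building blocks is compatible with the cosimplicial face and degeneracy maps.

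The main steps, in order, are as follows. First I would fix an order-preserving map $f:\{0,\dots,m\}\to\{0,\dots,l\}$ of simplices (it suffices to treat the generating coface and codegeneracy maps, but the general argument is the same) and record, exactly as in \textbf{Lemma \ref{lem:d_R is cosimplicial}}, the effect of the induced map on an element $a=a(\underline T^{(1)}_0,\dots,\underline T^{(n)}_0,\underline X^{(t)}_j)$ of $R_\Sigma^m$: by \textbf{Lemma \ref{lem:coproduct of R_Sigma}}(3) the map $p_0$ alone shifts each $\underline T^{(t)}_0$ by $-p\underline X^{(t)}_1$, so $f(a)$ is obtained by substituting $\underline T^{(t)}_{0}\mapsto \underline T^{(t)}_0-p(1-\delta_{0,f(0)})\underline X^{(t)}_{f(0)}$ and $\underline X^{(t)}_j\mapsto \underline X^{(t)}_{f(j)}$ in $a$. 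Second, I would differentiate this substitution formula: applying $\rd_{R_\Sigma}$ to $f(a)$ produces the ``expected'' terms $f(\rd_{R_\Sigma}(a))$ together with correction terms of the shape $p(1-\delta_{0,f(0)})\sum_{r,t}\tfrac{\partial f(a)}{\partial T_r^{(t)}}\otimes\rd X^{(t)}_{r,f(0)}$, and these are precisely the correction terms produced by $f$ acting on $\rd_{R_\Sigma}(a)$ via the identifications $\tfrac{\rd T^{(t)}_{r,0}}{p}=\tfrac{\rd T^{(t)}_{r,j}}{p}-\rd X^{(t)}_{r,j}$ from \textbf{Convention \ref{conv:R_Sigma}} --- this is the computation recorded in equations \eqref{equ:d_R is cosimplicial-I}, \eqref{equ:d_R is cosimplicial-II} in the single-$\delta$ case, now carried out componentwise in $t$. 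Third, I would extend from functions to general forms: writing a typical element of $\widehat\Omega^k_{R_\Sigma}\{-k\}\otimes_{R_\Sigma}R_\Sigma^m\oplus\widehat\Omega^{j-k}_{R_\Sigma^m/R_\Sigma}$ as $a\cdot(\text{wedge of }\tfrac{\rd T^{(t)}_{r,0}}{p}\text{'s})\otimes(\text{wedge of }\rd X^{(t')}_{r',s'}\text{'s})$, I would apply $f$ and $\rd_{R_\Sigma}$ in both orders and verify they agree, using that $f$ sends each $\tfrac{\rd T^{(t)}_{r,0}}{p}$ to $\tfrac{\rd T^{(t)}_{r,f(0)}}{p}$ and each $\rd X^{(t')}_{r',s'}$ to $\rd X^{(t')}_{r',f(s')}$, exactly mirroring equations \eqref{equ:d_R is cosimplicial-III}, \eqref{equ:d_R is cosimplicial-IV}. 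Finally, combining the four displays gives $\rd_{R_\Sigma}\circ f=f\circ\rd_{R_\Sigma}$ on all of $\widehat\Omega^j_{R_\Sigma^m}$, which is the claim.

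The only genuine bookkeeping obstacle is keeping track of the ``$1-\delta_{0,f(0)}$'' correction on the $T$-variables when $f(0)\neq 0$, together with the extra coordinates $\underline Y_i=\tfrac{\underline T^{(1)}-\underline T^{(i+1)}}{p}$ on $R_\Sigma$ itself --- one must observe that since $\rd_\Sigma(Y^{(i)}_r)=\rd Y^{(i)}_r=\tfrac{\rd T^{(1)}_r}{p}-\tfrac{\rd T^{(i+1)}_r}{p}$ lies in $\widehat\Omega^1_{R_\Sigma}\{-1\}$ already, the $Y$-variables do not introduce any new correction beyond those coming from the $T^{(t)}_0$'s, so the argument of \textbf{Lemma \ref{lem:d_R is cosimplicial}} goes through verbatim after bundling the $t$-components. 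In fact, once the decomposition $\widehat\Omega^1_{R_\Sigma^\bullet}\cong\bigoplus_t\widehat\Omega^1_{R_t^\bullet}\otimes_{R_t^\bullet}R_\Sigma^\bullet$ is in hand and one notes that $\rd_{R_\Sigma}$ restricts on the $t$-th summand to $\rd_{R_t}$ tensored with the relative differential on the remaining variables, the lemma follows from \textbf{Lemma \ref{lem:d_R is cosimplicial}} applied to each $R_t$ together with the standard compatibility of relative differentials with cosimplicial structure; I expect the proof in the paper to simply say ``the same argument as for Lemma \ref{lem:d_R is cosimplicial} applies'' after setting up this decomposition.
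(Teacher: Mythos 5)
Your proposal is correct and is essentially the paper's argument: the paper's proof of this lemma consists of the single sentence that the result follows from the same computation as in Lemma \ref{lem:d_R is cosimplicial}, which is exactly the reduction you carry out (decomposing $\widehat\Omega^1_{R_\Sigma^\bullet}$ into the $t$-indexed summands and repeating the substitution computation componentwise, with the check that the $\underline Y_i$-variables introduce no new correction terms). Your write-up is in fact more detailed than the paper's.
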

  \begin{proof}
    The result follows from the same argument in the proof of Lemma \ref{lem:d_R is cosimplicial}.
  \end{proof}

  Now, let $\bM\in \Vect((\frakX_0/A)_{\Prism},\calO_{\Prism,n})$ be a prismatic crystal of truncation $n$. Then $\bM(R_{\Sigma}^{\bullet},(p))$ is a cosimplicial $R_{\Sigma}^{\bullet}$-module. Let $q_0:R_{\Sigma} = R_{\Sigma}^0\to R_{\Sigma}^n$ be the map induced by the natural inclusion $\{0\}\subset\{0,1,\dots,n\}$ of simplices. Let $(M_{\Sigma},\varepsilon_{M_{\Sigma}} = \exp(\sum_{t=1}^{h}\sum_{i=1}^d\phi^{(t)}_iX^{(t)}_{1,i}))$ be the stratification corresponding to $\bM$ in the sense of Lemma \ref{lem:p_2p_0=p_1 R_Sigma}. Then we have a canonical isomorphism 
  \begin{equation}\label{equ:q_0-isomorphism R_Sigma}
      M_{\Sigma}\otimes_{R_{\Sigma},q_0}R_{\Sigma}^{\bullet}\cong \bM(R_{\Sigma}^{\bullet},(p)).
  \end{equation}
  For any $n\geq 0$, consider the cosimplicial $R^{\bullet}_{\Sigma}$-module $\bM(R_{\Sigma}^{\bullet},(p))\otimes_{R_{\Sigma}^{\bullet}}\widehat \Omega^n_{R_{\Sigma}^{\bullet}}$ and denote by \[\rd^{m,n}_1:=\sum_{i=0}^{m+1}(-1)^ip_i:\bM(R_{\Sigma}^{m},(p))\otimes_{R_{\Sigma}^{m}}\widehat \Omega^n_{R^{m}}\to \bM(R_{\Sigma}^{m+1},(p))\otimes_{R_{\Sigma}^{m+1}}\widehat \Omega^n_{R_{\Sigma}^{m+1}}\]
  the induced differential map on the totalization of $\bM(R_{\Sigma}^{\bullet},(p))\otimes_{R_{\Sigma}^{\bullet}}\widehat \Omega^n_{R_{\Sigma}^{\bullet}}$. Let
  \[\rd_2^{m,n}:\bM(R_{\Sigma}^{m},(p))\otimes_{R_{\Sigma}^{m}}\widehat \Omega^n_{R_{\Sigma}^{m}}\to\bM(R_{\Sigma}^{m},(p))\otimes_{R_{\Sigma}^{m}}\widehat \Omega^{n+1}_{R_{\Sigma}^{m}}\]
  be the composite of the following maps:
  \[\bM(R_{\Sigma}^{m},(p))\otimes_{R_{\Sigma}^{m}}\widehat \Omega^n_{R_{\Sigma}^{m}}\xrightarrow[(\ref{equ:q_0-isomorphism R_Sigma})]{\cong}M_{\Sigma}\otimes_{R_{\Sigma},q_0}\widehat \Omega^n_{R_{\Sigma}^{m}}\to M_{\Sigma}\otimes_{R_{\Sigma},q_0}\widehat \Omega^{n+1}_{R_{\Sigma}^{m}}\xrightarrow[(\ref{equ:q_0-isomorphism R_Sigma})]{\cong}\bM(R_{\Sigma}^{m},(p))\otimes_{R_{\Sigma}^{m}}\widehat \Omega^{n+1}_{R_{\Sigma}^{m}}.\]
  In the above, the first and last arrows are respectively given by the isomorphism \eqref{equ:q_0-isomorphism R_Sigma} in the following way:
  \begin{enumerate}
      \item[$\bullet$] $\bM(R_{\Sigma}^{m},(p))\otimes_{R_{\Sigma}^{m}}\widehat \Omega^n_{R_{\Sigma}^{m}}\cong M_{\Sigma}\otimes_{R_{\Sigma},q_0}R^m_{\Sigma}\otimes_{R_{\Sigma}^{m}}\widehat \Omega^n_{R_{\Sigma}^{m}}=M_{\Sigma}\otimes_{R_{\Sigma},q_0}\widehat \Omega^n_{R_{\Sigma}^{m}}$,

      \item[$\bullet$] $M_{\Sigma}\otimes_{R_{\Sigma},q_0}\widehat \Omega^{n+1}_{R_{\Sigma}^{m}} = M_{\Sigma}\otimes_{R_{\Sigma},q_0}R^m_{\Sigma}\otimes_{R_{\Sigma}^{m}}\widehat \Omega^{n+1}_{R_{\Sigma}^{m}}\cong\bM(R_{\Sigma}^{m},(p))\otimes_{R_{\Sigma}^{m}}\widehat \Omega^{n+1}_{R_{\Sigma}^{m}}$.
  \end{enumerate}
  The middle arrow is the map such that for any $x\in M_{\Sigma}$ and any $\omega\in\widehat \Omega^n_{R_{\Sigma}^{m}}$, it sends $x\otimes\omega$ to $\nabla_{M_{\Sigma}}(x)\wedge\omega+x\otimes\rd_{R_{\Sigma}}(\omega)$.
  In other words, via the above two isomorphisms, for any $x\in M_{\Sigma}$ and any $\omega\in\widehat \Omega^n_{R_{\Sigma}^{m}}$, we have
  \begin{equation}\label{equ:d_2 M_Sigma}
      \rd_2^{m,n}(x\otimes \omega) = \nabla_{M_{\Sigma}}(x)\wedge\omega+x\otimes\rd_{R_{\Sigma}}(\omega).
  \end{equation}
  Fix an $n\geq 0$. By letting $m$ vary, we get a map
  \[\rd_2^{\bullet,n}:\bM(R_{\Sigma}^{\bullet},(p))\otimes_{R_{\Sigma}^{\bullet}}\widehat \Omega^n_{R_{\Sigma}^{\bullet}}\to \bM(R_{\Sigma}^{\bullet},(p))\otimes_{R_{\Sigma}^{\bullet}}\widehat \Omega^{n+1}_{R_{\Sigma}^{\bullet}}.\]
  
  \begin{lem}\label{lem:bicomplex R_Sigma}
      The map $\rd_2^{\bullet,n}:\bM(R_{\Sigma}^{\bullet},(p))\otimes_{R_{\Sigma}^{\bullet}}\widehat \Omega^n_{R_{\Sigma}^{\bullet}}\to \bM(R_{\Sigma}^{\bullet},(p))\otimes_{R_{\Sigma}^{\bullet}}\widehat \Omega^{n+1}_{R_{\Sigma}^{\bullet}}$ described as above is a morphism of cosimplicial $A$-modules. In particular, by letting $n$ vary, we get a bicomplex $(\bM(R_{\Sigma}^{\bullet},(p))\otimes_{R_{\Sigma}^{\bullet}}\widehat \Omega^{\bullet}_{R_{\Sigma}^{\bullet}},\rd_1^{\bullet,\bullet},\rd_2^{\bullet,\bullet})$.
  \end{lem}
  \begin{proof}
      This follows from the same argument for the proof of Lemma \ref{lem:bicomplex}.
  \end{proof}

  Note that in this case we still have that 
    \begin{itemize}
        \item The complex $(\bM(R_{\Sigma}^{\bullet},(p)),\rd_1^{\bullet,0})$ is exactly the totalization of  $\bM(R_{\Sigma}^{\bullet},(p))$.

        \item The complex $(\bM(R_{\Sigma},(p))\otimes_{R_{\Sigma}}\widehat \Omega^{\bullet}_{R_{\Sigma}},\rd_2^{0,\bullet})$ is exactly the de Rham complex $\rD\rR(M_{\Sigma},\nabla_{M_{\Sigma}})$.
    \end{itemize}
    So if we denote by $\Tot_{\Sigma}(\bM)$ the totalization of the bicomplex $(\bM(R_{\Sigma}^{\bullet},(p))\otimes_{R_{\Sigma}^{\bullet}}\widehat \Omega^{\bullet}_{R_{\Sigma}^{\bullet}},\rd_1^{\bullet,\bullet},\rd_2^{\bullet,\bullet})$, then we get two natural morphisms $\rho_{M_{\Sigma}}^{\prime}$ and $\rho_{M_{\Sigma}}^{\prime\prime}$ of complexes of $A$-modules:
    \[\bM(R_{\Sigma}^{\bullet},(p))\xrightarrow{\rho_{M_{\Sigma}}^{\prime}}\Tot_{\Sigma}(\bM)\xleftarrow{\rho_{M_{\Sigma}}^{\prime\prime}}\rD\rR(M_{\Sigma},\nabla_{M_{\Sigma}}).\]

    \begin{lem}\label{lem:rho_Sigma is quasi-isomorphism}
        Keep notations as above. Then both $\rho_{M_{\Sigma}}^{\prime}$ and $\rho_{M_{\Sigma}}^{\prime\prime}$ are quasi-isomorphisms of complexes of $A$-modules. In particular, we have a quasi-isomorphism in the derived category of $A$-modules
        \[\rho_{\Sigma}(M_{\Sigma}):=(\rho_{M_{\Sigma}}^{\prime\prime})^{-1}\circ \rho_{M_{\Sigma}}^{\prime}:\bM(R_{\Sigma}^{\bullet},(p))\xrightarrow{\simeq} \rD\rR(M_{\Sigma},\nabla_{M_{\Sigma}}).\]
    \end{lem}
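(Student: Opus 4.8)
The plan is to run the same strategy that proved Lemma \ref{lem:construction of rho}, now in the $\Sigma$-setup; the bicomplex $\Tot_{\Sigma}(\bM) = \Tot\big(\bM(R_{\Sigma}^{\bullet},(p))\otimes_{R_{\Sigma}^{\bullet}}\widehat\Omega^{\bullet}_{R_{\Sigma}^{\bullet}},\rd_1^{\bullet,\bullet},\rd_2^{\bullet,\bullet}\big)$ plays the role of the mediating object, and we show each of the two edge inclusions is a quasi-isomorphism by a spectral-sequence (or row/column filtration) argument. For $\rho_{M_{\Sigma}}^{\prime}$: filter $\Tot_{\Sigma}(\bM)$ by the form-degree (the $\widehat\Omega^{\bullet}$ index). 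The associated graded in form-degree $j$ is the totalisation of the cosimplicial module $\bM(R_{\Sigma}^{\bullet},(p))\otimes_{R_{\Sigma}^{\bullet}}\widehat\Omega^{j}_{R_{\Sigma}^{\bullet}}$. Since $\bM$ is a crystal, $\bM(R_{\Sigma}^{\bullet},(p)) \cong M_{\Sigma}\otimes_{R_{\Sigma},q_0}R_{\Sigma}^{\bullet}$ is, up to the projective module $M_{\Sigma}$, just $R_{\Sigma}^{\bullet}$, and tensoring with the cosimplicial module $\widehat\Omega^{j}_{R_{\Sigma}^{\bullet}}$ which is homotopic to zero for $j\geq 1$ (Lemma \ref{lem:zero-homotopy}) kills all columns except $j=0$. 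Hence the form-degree filtration degenerates onto the $j=0$ column, which is exactly $(\bM(R_{\Sigma}^{\bullet},(p)),\rd_1^{\bullet,0}) = \Tot\bM(R_{\Sigma}^{\bullet},(p))$; this identifies $\rho_{M_{\Sigma}}^{\prime}$ as a quasi-isomorphism. One must be slightly careful that the contracting homotopies for $\widehat\Omega^{j}_{R_{\Sigma}^{\bullet}}$ remain valid after $\otimes_{R_{\Sigma}^{\bullet}}\bM(R_{\Sigma}^{\bullet},(p))$; this is fine because the homotopy is $R_{\Sigma}^{\bullet}$-linear and $\bM(R_{\Sigma}^{\bullet},(p))$ is a flat (indeed, obtained by base change of the projective $M_{\Sigma}$ along the cosimplicial structure) $R_{\Sigma}^{\bullet}$-module, exactly as in \cite[Lem. 5.15]{Tia23} and the proof of Lemma \ref{lem:construction of rho}.

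For $\rho_{M_{\Sigma}}^{\prime\prime}$: filter instead by the \v Cech index (the $R_{\Sigma}^{\bullet}$ cosimplicial direction). Here the associated graded in \v Cech-degree $m$ is $\bM(R_{\Sigma}^{m},(p))\otimes_{R_{\Sigma}^{m}}\widehat\Omega^{\bullet}_{R_{\Sigma}^{m}}$ with differential $\rd_2^{m,\bullet}$, i.e. (via \eqref{equ:q_0-isomorphism R_Sigma}) the de Rham complex $\rD\rR(M_{\Sigma}\otimes_{R_{\Sigma},q_0}R_{\Sigma}^{m},\nabla)$ of the connection $\nabla = \nabla_{M_{\Sigma}}\otimes 1 + 1\otimes\rd_{R_{\Sigma}}$ on the base change. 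Since $R_{\Sigma}^{m} \cong R_{\Sigma}[\underline X_1^{(i)},\dots,\underline X_m^{(i)}\mid 1\leq i\leq n]^{\wedge}_{\pd}$ (Lemma \ref{lem:coproduct of R_Sigma}) is, relative to $R_{\Sigma}$, a $p$-complete free pd-polynomial algebra, the relative de Rham complex in the extra pd-variables $\underline X_j^{(i)}$ is a $p$-complete Poincaré-type complex; the usual contracting homotopy (integrate the pd-variables, using that $\nabla_{M_{\Sigma}}$ is topologically quasi-nilpotent to guarantee $p$-adic convergence) shows the inclusion $\rD\rR(M_{\Sigma},\nabla_{M_{\Sigma}}) \hookrightarrow \rD\rR(M_{\Sigma}\otimes_{R_{\Sigma},q_0}R_{\Sigma}^{m},\nabla)$ is a quasi-isomorphism for each $m$, and compatibly in $m$. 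Thus the \v Cech-degree filtration degenerates onto the $m=0$ row, which is $\rD\rR(M_{\Sigma},\nabla_{M_{\Sigma}})$, identifying $\rho_{M_{\Sigma}}^{\prime\prime}$ as a quasi-isomorphism. Exactly this Poincaré-lemma computation is what underlies \cite[Thm. 5.14]{Tia23} for a single $\delta$-structure, and nothing in the argument uses $|\Sigma| = 1$; one just carries along the extra families of pd-variables indexed by $1\leq i\leq n$.

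Granting both claims, $\rho_{\Sigma}(M_{\Sigma}) := (\rho_{M_{\Sigma}}^{\prime\prime})^{-1}\circ\rho_{M_{\Sigma}}^{\prime}$ is a well-defined quasi-isomorphism $\bM(R_{\Sigma}^{\bullet},(p)) \xrightarrow{\simeq} \rD\rR(M_{\Sigma},\nabla_{M_{\Sigma}})$. The main obstacle — or rather the only point needing genuine care — is the $p$-adic convergence in the relative Poincaré lemma of the second step: because the de Rham differential involves both $\nabla_{M_{\Sigma}}$ and the pd-differential $\rd_{R_{\Sigma}}$ and the extra variables $\underline X_j^{(i)}$ appear via $pd$-powers $(\underline X_j^{(i)})^{[\underline k]}$ with denominators, one must check that the contracting homotopy (which divides by $|\underline k|+1$ and uses the stratification $\varepsilon_{M_{\Sigma}} = \exp(\sum_{t,i}\phi_i^{(t)}X_{i,1}^{(t)})$) lands in the $p$-complete pd-algebra and converges; topological quasi-nilpotence of the $\phi_i^{(t)}$'s (Lemma \ref{lem:p_2p_0=p_1 R_Sigma}(2)) is exactly what makes this work, so the d\'evissage to the $n=0$ (Hodge–Tate) case as in Lemma \ref{lem:construction of rho}, combined with Remark \ref{rmk:devissage}, can alternatively be invoked to reduce to the already-established \cite[Thm. 5.14]{Tia23}. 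I would phrase the final write-up using this d\'evissage to keep it short: reduce mod $p$, where $\rd_{R_{\Sigma}}$ becomes honest (no $p$-multiples), and cite the corresponding statement from \cite{Tia23} applied to the cover $R_{\Sigma}$ in place of $R$.
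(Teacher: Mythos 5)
Your proposal is correct and follows essentially the same route as the paper: $\rho_{M_{\Sigma}}^{\prime}$ is handled via Lemma \ref{lem:zero-homotopy} (the zero-homotopy of $\widehat\Omega^j_{R_\Sigma^\bullet}$ for $j\geq 1$, tensored with the crystal), and $\rho_{M_{\Sigma}}^{\prime\prime}$ via the d\'evissage to $n=0$ — which makes $\nabla_{M_\Sigma}$ $R_0$-linear and splits each row as $\rD\rR(M_\Sigma,\nabla_{M_\Sigma})\otimes(\widehat\Omega^\bullet_{R_\Sigma^m/R_\Sigma},\rd)$ — followed by the pd-Poincar\'e lemma and the $0,\id,0,\dots$ pattern of the induced \v Cech differentials, exactly as in \cite[Thm. 5.14 and Lem. 5.17]{Tia23}. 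Your closing recommendation to phrase the write-up via the mod-$p$ reduction rather than a direct $p$-adic contracting homotopy is precisely the choice the paper makes.
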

    \begin{proof}
        For $\rho_{M_{\Sigma}}^{\prime}$: By Lemma \ref{lem:zero-homotopy}, for any $j\geq 1$, the cosimplicial $A$-module $\bM(R_{\Sigma}^{\bullet},(p))\otimes_{R_{\Sigma}^{\bullet}}\widehat 
        \Omega^1_{R^{\bullet}_{\Sigma}}$ is homotopic to zero. Therefore, the natural morphism $(\bM(R_{\Sigma}^{\bullet},(p)),\rd_1^{\bullet,0})\to\Tot_{\Sigma}(\bM)$ is a quasi-isomorphism in the derived category of $A$-modules. By definition, this is exactly $\rho_{M_{\Sigma}}^{\prime}$.
    
        For $\rho_{M_{\Sigma}}^{\prime\prime}$: By the same d\'evissage argument as in the proof of Lemma \ref{lem:construction of rho}, we reduce to the case $n=0$. In this case, the $R_0$-linear structure on $R_{\Sigma}/p$ induced from the natural morphism $R_t/p\to R_{\Sigma}/p$ is independent of the choice of $1\leq t\leq h$ (as $\underline T^{(t)}\equiv \underline T^{(1)}\mod pR_{\Sigma}$ for all $t$). In particular, $\varepsilon_{M_{\Sigma}}$ is $R_0$-linear. Therefore, $\nabla_{M_{\Sigma}}$ is also $R_0$-linear and thus so is $\rd_2^{\bullet,\bullet}$.

        Next, we claim that for any $0\leq i\leq m$, the morphism $q_i:\{0\}\xrightarrow{0\mapsto i}\{0,1,\dots,m\}$ induces an isomorphism of complexes
        \[\rD\rR(M_{\Sigma},\nabla_{M_{\Sigma}}) = (\bM(R_{\Sigma}^{\bullet},(p)),\rd_2^{0,\bullet}) \xrightarrow{\simeq}(\bM(R^m_{\Sigma},(p))\otimes_{R_{\Sigma}^m}\widehat \Omega^{\bullet}_{R_{\Sigma}^m},\rd_2^{m,\bullet}).\]
        Indeed, as $\nabla_{M_{\Sigma}}$ is $R_0$-linear, by the definition of $d_2^{\bullet,\bullet}$ (\ref{equ:d_2 M_Sigma}), we see that $(\bM(R^m_{\Sigma},(p))\otimes_{R_{\Sigma}^m}\widehat \Omega^{\bullet}_{R_{\Sigma}^m},\rd_2^{m,\bullet})$ is the tensor product of complexes
        \[(\bM(R^m_{\Sigma},(p))\otimes_{R_{\Sigma}^m}\widehat \Omega^{\bullet}_{R_{\Sigma}^m},\rd_2^{m,\bullet}) = \rD\rR(M_{\Sigma},\nabla_{M_{\Sigma}})\otimes(\widehat \Omega^{\bullet}_{R^{m}_{\Sigma}/R_{\Sigma}},\rd).\]
        As $R_{\Sigma}^m$ is the $p$-adic completion of a free pd-algebra over $R_{\Sigma}$, the claim follows from the same argument in the proof of \cite[Lem. 5.17]{Tia23}.

        Thanks to the above claim, the same argument for the proof of \cite[Thm. 5.14]{Tia23} (i.e. the paragraphs above \cite[Rem. 5.18]{Tia23}) implies that via the quasi-isomorphisms in the claim, the complex $\Tot_{\Sigma}(\bM)$ is quasi-isomorphic to the totalization of the complex
        \[\rD\rR(M_{\Sigma},\nabla_{M_{\Sigma}})\xrightarrow{0}\rD\rR(M_{\Sigma},\nabla_{M_{\Sigma}})\xrightarrow{\id}\rD\rR(M_{\Sigma},\nabla_{M_{\Sigma}})\xrightarrow{0}\rD\rR(M_{\Sigma},\nabla_{M_{\Sigma}})\to\cdots,\]
        and thus is quasi-isomorphic to $\rD\rR(M_{\Sigma},\nabla_{M_{\Sigma}})$ as desired.
    \end{proof}

    \begin{prop}\label{prop:local compatibility of cohomology}
        Fix a lifting $\varphi_{R_1}:R_1\to R_1$ of the absolute Frobenius map $\varphi_{R_0}:R_0\to R_0$ over $\phi_{A_1}:A_1\to A_1$.
        For any $\bM\in \Vect((\frakX_0/A)_{\Prism},\calO_{\Prism,n})$ with corresponding $(\calM,\nabla_{\calM})\in \MIC_p^{\rm tqn}(\frakX_n)$, there exists a quasi-isomorphism in the derived category of $A$-modules
        \[\rho(\bM):\rR\Gamma((\frakX_0/A)_{\Prism},\bM)\simeq \rR\Gamma(\frakX_{n,\et},\rD\rR(\calM,\nabla_{\calM}))\]
        which is functorial in $\bM$ and \emph{canonical} in the sense that it only depends on the lifting $\varphi_{R_1}$.
    \end{prop}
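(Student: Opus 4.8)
The plan is to show that the quasi-isomorphism $\rho_{\delta}(\bM)$ produced in Proposition \ref{prop:cohomology comparison}, transported to $\rD\rR(\calM,\nabla_{\calM})$ along the canonical identification $(M_{\delta},\nabla_{\delta})\simeq(\calM,\nabla_{\calM})$ furnished by Theorem \ref{thm:main}(1), does not depend on the choice of a $\delta$-structure $\delta$ on $R$ whose associated Frobenius reduces to $\phi_{R_1}$ modulo $p^2$ (such $\delta$ exist by the smoothness of $R$ over $A$, and any two of them satisfy Assumption \ref{assumpsion:key}). Granting this, $\rho(\bM)$ is defined to be this common value; functoriality in $\bM$ and the fact that it depends only on $\phi_{R_1}$ are then immediate from the constructions, and by the cocycle relation of Proposition \ref{prop:compare local equivalence}(2) it suffices to compare two admissible $\delta$-structures $\delta_1,\delta_2$ at a time.

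First I would note that $\rho_{\delta}(\bM)$ coincides with $\rho_{\Sigma}(\bM):=\rho_{\Sigma}(M_{\Sigma})\circ\iota_{\Sigma}(\bM)$ for the one-element set $\Sigma=\{\delta\}$, because for such $\Sigma$ all of $R_{\Sigma}$, $\rd_{\Sigma}$, $\widehat\Omega^{\bullet}_{R_{\Sigma}^{\bullet}}$, $\rd_{R_{\Sigma}}$ and the bicomplex $\Tot_{\Sigma}(\bM)$ of \S\ref{ssec:compatibility of cohomology} specialize to the objects of \S\ref{ssec:cohomological comparison}. Then, for any $\Sigma\supseteq\{\delta\}$, the whole package is functorial in $\Sigma$: $\iota_{\Sigma}(\bM)$ via \cite[Prop. 4.10]{Tia23}, the cosimplicial modules $\widehat\Omega^{\bullet}_{R_{\Sigma}^{\bullet}}$ and differential $\rd_{R_{\Sigma}}$ via Lemma \ref{lem:d_Sigma is cosimplicial}, the bicomplex $\Tot_{\Sigma}(\bM)$, and the comparison $g_{\{\delta\},\Sigma}\colon\rD\rR(M_{\delta},\nabla_{\delta})\to\rD\rR(M_{\Sigma},\nabla_{M_{\Sigma}})$ of Remark \ref{rmk:functorial in Sigma}. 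Tracing this functoriality at chain level and passing to the derived category gives $g_{\{\delta\},\Sigma}\circ\rho_{\delta}(\bM)=\rho_{\Sigma}(\bM)$ in $D(A)$; since $\rho_{\delta}(\bM)$ (Proposition \ref{prop:cohomology comparison}) and $\rho_{\Sigma}(\bM)$ (Lemma \ref{lem:rho_Sigma is quasi-isomorphism} together with \cite[Prop. 4.10]{Tia23}) are both quasi-isomorphisms, so is $g_{\{\delta\},\Sigma}$. Applying this with $\Sigma=\{\delta_1,\delta_2\}$ gives $\rho_{\delta_2}(\bM)=(g_{\{\delta_2\},\Sigma})^{-1}\circ g_{\{\delta_1\},\Sigma}\circ\rho_{\delta_1}(\bM)$ in $D(A)$, so everything reduces to the identity $(g_{\{\delta_2\},\Sigma})^{-1}\circ g_{\{\delta_1\},\Sigma}=\iota_{\delta_1\delta_2*}$ of maps $\rD\rR(M_{\delta_1},\nabla_{\delta_1})\to\rD\rR(M_{\delta_2},\nabla_{\delta_2})$ in $D(A)$, where $\iota_{\delta_1\delta_2*}$ is induced by the identification $\iota_{\delta_1\delta_2}$ of Proposition \ref{prop:compare local equivalence}.

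For this last identity I would work with the pd-variables $\underline Y=\tfrac{\underline T^{(1)}-\underline T^{(2)}}{p}$ of $R_{\Sigma}=R_1[\underline Y]^{\wedge}_{\pd}=R_2[\underline Y]^{\wedge}_{\pd}$ (Convention \ref{conv:R_Sigma}). The map $g_{\{\delta_1\},\Sigma}$ embeds $\rD\rR(M_{\delta_1},\nabla_{\delta_1})$ as the $\underline Y$-independent subcomplex built from the forms $\tfrac{\rd T^{(1)}_i}{p}$, while $g_{\{\delta_2\},\Sigma}\circ\iota_{\delta_1\delta_2*}$ does the analogous thing after the $R_1\to R_2$ comparison, which on underlying modules is multiplication by $\exp(-\sum_i\phi_{i,2}Y_i)$ (cf. (\ref{equ:delta1 vs delta2-I})) and on differentials uses $\tfrac{\rd T^{(2)}_i}{p}=\tfrac{\rd T^{(1)}_i}{p}-\rd Y_i$. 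I would then construct the chain map $r\colon\rD\rR(M_{\Sigma},\nabla_{M_{\Sigma}})\to\rD\rR(M_{\delta_1},\nabla_{\delta_1})$ given by evaluation at $\underline Y=0$ (set $\underline Y=0$ in all coefficients and delete every $\rd Y_i$-factor). By the Poincaré lemma for the free pd-polynomial direction -- the computation already used in the proof of Lemma \ref{lem:rho_Sigma is quasi-isomorphism}, in the spirit of \cite[Lem. 5.17]{Tia23} -- the map $r$ is a quasi-isomorphism, and one checks directly that $r\circ g_{\{\delta_1\},\Sigma}=\id$ and, since applying $r$ turns $\exp(-\sum_i\phi_{i,2}Y_i)$ into the identity and $\tfrac{\rd T^{(2)}_i}{p}$ into $\tfrac{\rd T^{(1)}_i}{p}$, also $r\circ g_{\{\delta_2\},\Sigma}\circ\iota_{\delta_1\delta_2*}=\id$. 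Hence $[g_{\{\delta_1\},\Sigma}]=[r]^{-1}=[g_{\{\delta_2\},\Sigma}\circ\iota_{\delta_1\delta_2*}]$ in $D(A)$, which is the desired identity; combined with the previous paragraph this yields $\rho_{\delta_2}(\bM)=\iota_{\delta_1\delta_2*}\circ\rho_{\delta_1}(\bM)$ in $D(A)$. The cocycle relation of Proposition \ref{prop:compare local equivalence}(2) makes these transported maps agree over all admissible $\delta$, producing the well-defined functorial quasi-isomorphism $\rho(\bM)$, which in the present affine situation is the asserted $\rR\Gamma((\frakX_0/A)_{\Prism},\bM)\simeq\rR\Gamma(\frakX_{n,\et},\rD\rR(\calM,\nabla_{\calM}))=\rD\rR(M,\nabla_M)$.

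The hard part, I expect, is the verification in the third paragraph: pinning down the chain-level maps $g_{\{\delta_i\},\Sigma}$ precisely (carrying along the $\exp$-twist from $\iota_{\delta_1\delta_2}$ and the change of basis of differentials), confirming that $r$ is a chain map and a quasi-isomorphism via the pd-Poincaré lemma, and checking $r\circ g=\id$ on the nose in both cases. The functoriality bookkeeping of the second paragraph is routine but has to be set up carefully, since it is what legitimizes comparing $\rho_{\delta}$, $\rho_{\Sigma}$ and the $g$'s inside a single derived category.
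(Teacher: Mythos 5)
Your proposal follows essentially the same route as the paper: both arguments hinge on the coproduct prism $R_{\Sigma}$ attached to a finite set $\Sigma$ of admissible $\delta$-structures, the quasi-isomorphisms $\rho_{\Sigma}(\bM)=\rho_{\Sigma}(M_{\Sigma})\circ\iota_{\Sigma}(\bM)$, and the commutative triangle showing that the transition maps $i_{\Sigma,\Sigma'}$ intertwine $\rho_{\Sigma}$ and $\rho_{\Sigma'}$ and are therefore quasi-isomorphisms. The difference is organizational: the paper defines $\rho(\bM)$ as a colimit of the $\rho_{\Sigma}(\bM)$ over the cofiltered poset of finite subsets of $\frakS$, identifying each $\rD\rR(M_{\Sigma},\nabla_{M_{\Sigma}})$ with $\rR\Gamma(\frakX_{n,\et},\rD\rR(\calM,\nabla_{\calM}))$ "by considering any $\delta\in\Sigma$", whereas you compare two $\delta$-structures at a time via $\Sigma=\{\delta_1,\delta_2\}$ and invoke the cocycle relation of Proposition \ref{prop:compare local equivalence}(2). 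Your extra step --- verifying through the evaluation-at-$\underline Y=0$ retraction $r$ that $(g_{\{\delta_2\},\Sigma})^{-1}\circ g_{\{\delta_1\},\Sigma}$ is exactly the gluing isomorphism $\iota_{\delta_1\delta_2*}$ --- is precisely what makes the paper's phrase "any $\delta\in\Sigma$" legitimate, i.e. it shows the identification of $\rD\rR(M_{\Sigma},\nabla_{M_{\Sigma}})$ with the fixed target is independent of the chosen $\delta\in\Sigma$ up to the canonical identifications of Theorem \ref{thm:main}(1); the paper leaves this implicit. So your version is, if anything, slightly more complete on the one point where care is genuinely required; the computations you flag as the hard part (that $r$ is a chain map killing the $\exp(-\sum_i\phi_{i,2}Y_i)$-twist and sending $\tfrac{\rd T^{(2)}_i}{p}$ to $\tfrac{\rd T^{(1)}_i}{p}$, and that it is a quasi-isomorphism by the pd-Poincar\'e lemma) do go through exactly as in the proof of Lemma \ref{lem:rho_Sigma is quasi-isomorphism}.
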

    \begin{proof}
        Let $\frakS$ be the set of all $\delta$-structures on $R$ whose induced Frobenius endomorphism of $R$ coincides with the given $\varphi_{R_1}$ modulo $p^2$. For any non-empty finite subset $\Sigma\subset\frakS$, let $(R_{\Sigma}^{\bullet},(p))$ be as above. By \cite[Prop. 4.10]{Tia23}, there is a quasi-isomorphism of complexes of $A$-modules
        \[\iota_{\Sigma}(\bM):\rR\Gamma((\frakX_0/A)_{\Prism},\bM)\xrightarrow{\simeq}\bM(R_{\Sigma}^{\bullet},(p)).\]
        Let $(M_{\Sigma},\nabla_{M_{\Sigma}})$ be the connection with respect to $\rd_{\Sigma}$ associated to $\bM$ via the equivalence in Corollary \ref{cor:connection with respect to d_Sigma}.
        Define 
        \[\rho_{\Sigma}(\bM):=\rho_{\Sigma}(M_{\Sigma})\circ\iota_{\Sigma}(\bM):\rR\Gamma((\frakX_0/A)_{\Prism},\bM)\xrightarrow{\simeq}\rD\rR(M_{\Sigma},\nabla_{M_{\Sigma}}),\]
        which is a quasi-isomorphism in the derived category of $A$-modules by Lemma \ref{lem:rho_Sigma is quasi-isomorphism}. By Remark \ref{rmk:functorial in Sigma}, for any $\Sigma^{\prime}\in\frakS$ satisfying $\Sigma\subset\Sigma^{\prime}$, we have a natural morphism of complexes of $A$-modules
        \[i_{\Sigma,\Sigma^{\prime}}:\rD\rR(M_{\Sigma},\nabla_{M_{\Sigma}})\to \rD\rR(M_{\Sigma^{\prime}},\nabla_{M_{\Sigma^{\prime}}}).\]
        By construction of $\rho_{\Sigma}(\bM)$ and $\rho_{\Sigma^{\prime}}(\bM)$, the following diagram
        \[\xymatrix@C=0.5cm{
           \rR\Gamma((\frakX_0/A)_{\Prism},\bM)\ar[rrrd]_{\rho_{\Sigma^{\prime}}(\bM)}\ar[rrr]^{\rho_{\Sigma}(\bM)}&&&\rD\rR(M_{\Sigma},\nabla_{M_{\Sigma}})\ar[d]^{\iota_{\Sigma,\Sigma^{\prime}}}\\
           &&&\rD\rR(M_{\Sigma^{\prime}},\nabla_{M_{\Sigma^{\prime}}})
        }\]
        is commutative. So $\iota_{\Sigma,\Sigma^{\prime}}$ is also a quasi-isomorphism of complexes of $A$-modules. 
        Using Proposition \ref{prop:cohomology comparison}, for any $\Sigma\in \frakS$, we obtain a quasi-isomorphism of complexes of $A$-modules
        \[\rR\Gamma(\frakX_{n,\et},\rD\rR(\calM,\nabla_{\calM}))\simeq \rD\rR(M_{\Sigma},\nabla_{M_{\Sigma}})\]
        by considering any $\delta\in \Sigma$ and the map $\iota_{\{\delta\},\Sigma}$. Therefore, $\Sigma$ induces a quasi-isomorphism of complexes of $A$-modules, which is still denoted by
        \[\rho_{\Sigma}(\bM):\rR\Gamma((\frakX_0/A)_{\Prism},\bM)\to\rR\Gamma(\frakX_{n,\et},\rD\rR(\calM,\nabla_{\calM})).\]
        Since the set $\calP(\frakS)$ of all non-empty finite subsets of $\frakS$ is cofiltered, by taking colimit, we then obtain a quasi-isomorphism of complexes of $A$-modules
        \[\rho(\bM):\colim_{\Sigma\in\calP(\frakS)}\rho_{\Sigma}(\bM):\rR\Gamma((\frakX_0/A)_{\Prism},\bM)\to\rR\Gamma(\frakX_{n,\et},\rD\rR(\calM,\nabla_{\calM})).\]
        By construction, this quasi-isomorphism $\rho(\bM)$ is functorial in $\bM$ and independent of the choice of $\delta$-structures on $R$ (but depends on the lifting $\varphi_{R_1}$ of $\varphi_{R_0}$).
    \end{proof}

\subsection{The proof of Theorem \ref{thm:main}(2)}\label{ssec:proof-II}
\begin{proof}[\textbf{Proof of Theorem \ref{thm:main}}(2)]
    Let $\bM\in \Vect((\frakX_0/A)_{\Prism},\calO_{\Prism,n})$ with associated $(\calM,\nabla_{\calM})\in\MIC^{\rm tqn}_p(\frakX_n)$ via the equivalence in Theorem \ref{thm:main}(1). It suffices to show for any $\frakV_0\in\frakX_{0,\et}$ with the induced lifting $\frakV_n\in \frakX_{n,\et}$, we have a canonical quasi-isomorphism in the derived category of $A$-modules
    \[\rR\Gamma((\frakV_0/A)_{\Prism},\bM)\simeq \rR\Gamma(\frakV_{n,\et},\rD\rR(\calM,\nabla_{\calM}))\]
    which depends only on the lifting $\rF_{\frakX_1}$ of $\rF_{\frakX_0}$.
    
    Keep notations as in \S\ref{ssec:proof-I} and for each $i\in I$, let $\frakU_{n,i}\subset\frakX_n$ be the induced lifting of $\frakU_{0,i}$. Put $\frakV_{n,i}:=\frakV_n\times_{\frakX_n}\frakU_{n,i}$. Then $\{\frakV_{n,i}\to\frakV_n\}_{i\in I}$ is a covering of $\frakV_n$ by affine schemes with charts. Let $\frakV_n^{\bullet}$ be the \v Cech nerve associated to the \'etale covering $\{\frakV_{n,i}\to\frakV_n\}_{i\in I}$ of $\frakV_n$. Then by Proposition \ref{prop:local compatibility of cohomology}, for each $\bullet\geq 0$, there is a canonical quasi-isomorphism in the derived category of $A$-modules
    \[\rR\Gamma((\frakV_0^{\bullet}/A)_{\Prism},\bM)\simeq \rR\Gamma(\frakV^{\bullet}_{n,\et},\rD\rR(\calM,\nabla_{\calM}))\]
    depending only on the lifting $\rF_{\frakX_1}$ of $\rF_{\frakX_0}$. By taking limits on both sides, we then obtain quasi-isomorphisms in the derived category of $A$-modules
    \[\rR\Gamma((\frakV_0/A)_{\Prism},\bM)\simeq\lim_{\Delta}\rR\Gamma((\frakV_0^{\bullet}/A)_{\Prism},\bM)\simeq \lim_{\Delta}\rR\Gamma(\frakV^{\bullet}_{n,\et},\rD\rR(\calM,\nabla_{\calM})) \simeq \rR\Gamma(\frakV_{n,\et},\rD\rR(\calM,\nabla_{\calM}))\]
    which depends only on $\rF_{\frakX_1}$ as desired. This completes the proof.
\end{proof}

\section{Remarks on trivialization of Hodge--Tate gerbe}\label{sec:remarks on gerbes}
  We remark that Bhatt--Lurie proved that when $\rF_{\frakX_0}:\frakX_0\to\frakX_0$ admits a lifting $\rF_{\frakX}:\frakX\to\frakX$ over $\phi_A:A\to A$, the gerbe $\pi_{\frakX_0}^{\rm HT}:\frakX_0^{\rm HT}\to\frakX_0$ splits \cite[Rem. 5.13(1)]{BL22b}. For furthur discussion, let us recall the proof in the case where $\frakX_0 = \Spf(R_0)$ is affine with a fixed chart (cf. \cite[The second paragraph of the proof of Prop. 5.12]{BL22b}):
  Let $R$ be a smooth lifting of $R_0$ over $A$ with a $\delta$-structure $\delta$. For any $R_0$-algebra $S$, 
  let $\overline{\rW(S)}$ be the derived quotient of $\rW(S)$ modulo $p$, i.e. the cofiber of the generalized invertible ideal $\rW(S)\xrightarrow{\times p}\rW(S)$. When $S$ is reduced, $\overline{\rW(S)}$ coincides with the classical quotient  
  \[\overline{\rW(S)} = \rW(S)/p\rW(S) = \rW(S)/\{(0,a_1^p,a_2^p,\dots)\in\rW(S)\mid a_1,a_2,\dots\in S\}\]
  as $\rW(S)$ is $p$-torison free. Then 
  we have
  \[\begin{split}
       &\frakX_0(S) = \Hom_{A_0\text{-algebra}}(R_0,S) = \Hom_{A\text{-algebra}}(R,S)
      \xrightarrow[\cong]{\iota_{\delta}} \Hom_{\delta\text{-}A\text{-algebra}}(R,\rW(S))\\
      \to & \Hom_{A\text{-algebra}}(R,\overline{\rW(S)})= \Hom_{A_0\text{-algebra}}(R_0,\overline{\rW(S)}) =  \frakX_0^{\rm HT}(S),
  \end{split}\]
  where $\iota_{\delta}$ is the only isomorphism which is non-canonical and depends on the choice of the $\delta$-structure on $R$.
  This induces a section
  \[s_{\delta}:\frakX_0\to \frakX_0^{\rm HT}\]
  of the Hodge--Tate structure morphism $\pi^{\rm HT}_{\frakX_0}:\frakX_0^{\rm HT}\to \frakX_0$ (when $\frakX_0 = \Spf(R_0)$) which depends on the choice of $\delta$. 
   We remark that for any $f\in \Hom_{A\text{-algebra}}(R,S)$ and any $x\in R$, the morphism of $\delta$-$A$-algebras $\iota_{\delta}(f)$ uniquely factors as
   \[R\xrightarrow{x\mapsto(x,\delta(x),\delta_2(x),\dots)}\rW(R)\xrightarrow{\rW(f)}\rW(S),\]
   where $\delta_n$ denotes the $n$-th Joyal's coordinate of Witt vectors (cf. \cite[Rem. 2.13]{BS22}).
  In general, suppose that $\rF_{\frakX_0}:\frakX_0\to\frakX_0$ admits a lifting $\rF_{\frakX}:\frakX\to\frakX$ over $A$. For any \'etale localization $\Spf(R_0)\to\frakX_0$, it admits a lifting $\Spf(R)\to \frakX$ over $A$ such that the Frobenius $\rF_{\frakX}$ provides a canonical choice of $\delta$-structure on $R$. So the sections $s_{\delta}$'s, for all affine \'etale localizations of $\frakX_0$ constructed above, glue and give a section $s$ of $\pi^{\rm HT}_{\frakX_0}$.

  Inspired by Theorem \ref{thm:main}, one may ask if $R$ admits two $\delta$-structures $\delta_1$ and $\delta_2$, does $s_{\delta_1}$ coincide with $s_{\delta_2}$ as long as Assumption \ref{assumpsion:key} is true? If this were true, then we would get a section of the gerbe $\pi_{\frakX_0}^{\rm HT}$ as long as $(\frakX_0,\rF_{\frakX_0})$ has a lifting over $A_1$.
  Although we are not able to answer this question here, we can give a refined version of the above result of Bhatt and Lurie.
  We will prove that if $\delta_1\equiv \delta_2\mod p^m$ for some $m\geq 1$ (equivalently, the corresponding Frobenius endomorphisms $\phi_i$ with respect to $\delta_i$ satisfy that $\phi_1\equiv \phi_2\mod p^{m+1}$), then $s_{\delta_1}$ and $s_{\delta_2}$ coincide in $\Hom_{A\text{-algebra}}(R,\overline{\rW_{m+1}(S)})$ for any $R_0$-algebra $S$ when $A_0$ is reduced (cf. Corollary \ref{cor:truncated split}).
  \begin{dfn}\label{dfn:good}
      Let $A$ be a ring and $\phi:A/p\to A/p$ be the absolute Frobenius map. An element $a$ in $A$ is called \emph{good}, if its reduction modulo $p$ belongs to $\Ima(\phi_A:A/p\to A/p)$. For any $a_1,a_2\in A$, we write $a_1\sim a_2$ if $a_1-a_2$ is good. This is an equivalence relation on $A$.
  \end{dfn}
  
  Note that for any two $\delta$-structures $\delta_1$ and $\delta_2$ on $R$, if $\delta_{1,n}(x)\sim \delta_{2,n}(x)$ for all $x\in R$ and each $0\leq n\leq m+1$, then we have $s_{\delta_1} = s_{\delta_2}$ in $\Hom_{A\text{-algebra}}(R,\overline{\rW_{m+1}(S)})$ for any $R_0$-algebra $S$.
  
  \begin{lem}\label{lem:good}
      Let $A$ be a ring and $a\in A$. The following are equivalent:
      \begin{enumerate}
          \item[(1)] The $a$ is good.

          \item[(2)] There are some $b,c\in A$ such that $a = b^p+pc$.

          \item[(3)] If furthermore $A$ is a $\delta$-ring with the induced Frobenius endomorphism $\phi$, then there are some $b,c\in A$ such that $a = \phi(b)+pc$.
      \end{enumerate}
  \end{lem}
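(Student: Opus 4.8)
The plan is to verify the two equivalences $(1)\Leftrightarrow(2)$ and $(2)\Leftrightarrow(3)$ by directly unwinding the definitions; no deep input is needed, and the cleanest packaging is the short cycle $(2)\Rightarrow(1)\Rightarrow(2)$ together with the symmetric equivalence $(2)\Leftrightarrow(3)$.

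For $(1)\Rightarrow(2)$: by definition $a$ is good means its image $\bar a\in A/p$ lies in $\Ima(\phi_{A/p})$, i.e. there is $\bar b\in A/p$ with $\bar a=\bar b^{\,p}$, the absolute Frobenius on the $\Fp$-algebra $A/p$ being the $p$-th power map. Choose any set-theoretic lift $b\in A$ of $\bar b$. Then $a-b^p\equiv 0\bmod p$, so $a-b^p=pc$ for some $c\in A$, which is precisely the shape in (2). For $(2)\Rightarrow(1)$: reducing $a=b^p+pc$ modulo $p$ gives $\bar a=\bar b^{\,p}=\phi_{A/p}(\bar b)$, so $\bar a\in\Ima(\phi_{A/p})$ and $a$ is good.

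For the equivalence of (2) and (3) under the $\delta$-ring hypothesis, I would use the defining identity $\phi(b)=b^p+p\,\delta(b)$ of the Frobenius attached to a $\delta$-structure. Given a presentation $a=b^p+pc$ as in (2), substitute $b^p=\phi(b)-p\,\delta(b)$ to obtain $a=\phi(b)+p\bigl(c-\delta(b)\bigr)$, a presentation of the form (3). Conversely, given $a=\phi(b)+pc$ as in (3), substitute $\phi(b)=b^p+p\,\delta(b)$ to get $a=b^p+p\bigl(c+\delta(b)\bigr)$, a presentation of the form (2); in both directions the witness $b$ is unchanged and only the error term is adjusted by $\delta(b)$.

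There is essentially no obstacle here. The only points requiring (minimal) care are that $A/p$ is an $\Fp$-algebra so that its absolute Frobenius is literally $x\mapsto x^p$, and that a set-theoretic lift of $\bar b$ to $A$ always exists; both are immediate.
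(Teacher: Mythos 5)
Your proof is correct and is exactly the standard verification the paper has in mind — the paper's own "proof" is literally just the word "Easy!", so you have simply supplied the obvious details (lift a $p$-th root of $\bar a$ for $(1)\Leftrightarrow(2)$, and use $\phi(b)=b^p+p\,\delta(b)$ to pass between $(2)$ and $(3)$). Nothing to correct.
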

  \begin{proof}
      Easy!
  \end{proof}
  \begin{lem}\label{lem:congruence}
      Let $A$ be a $\delta$-ring with the induced Frobenius endomorphism $\phi$. Then we have that
      \begin{enumerate}
          \item[(1)] if $a\in A$ is good, then so is $\delta(a)$,

          \item[(2)] for any $a,b,c\in A$, $\delta(a+pb+c^p)\sim \delta(a)-\sum_{i=1}^{p-1}\frac{\binom{p}{i}}{p}a^ic^{p(p-i)}$; in particular, if $a_1,a_2\in A$ satisfying $a_1\equiv a_2\mod p$, then $\delta(a_1)\sim\delta(a_2)$.
      \end{enumerate}
  \end{lem}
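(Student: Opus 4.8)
The plan is to prove the identity in (2) first and then obtain (1) as a special case. The basic mechanism is the following reformulation of ``goodness'': since the Frobenius $\phi_{A/p}$ is a ring endomorphism of the $\bF_p$-algebra $A/p$, its image is a subring, so the set $G$ of good elements of $A$ is a subring of $A$; consequently $\sim$ is a congruence compatible with $+$ and $-$, and it suffices to track all terms modulo good elements. I would record at the outset the elementary facts that every element of $pA$ is good, every $p$-th power $b^p$ is good, every value $\phi(b)$ is good, and $\delta(0)=0=\delta(1)$.

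Next I would establish two ``building-block'' congruences. First, $\delta(pb)$ is good for every $b$: since $\phi(p)=p$ one has $\delta(pb)=\tfrac{\phi(pb)-(pb)^p}{p}=\phi(b)-p^{p-1}b^p$, a difference of a $\phi$-value and (as $p\ge 2$) an element of $pA$. Second, $\delta(c^p)\in pA$ for every $c$: writing $\phi(c)=c^p+p\delta(c)$ and expanding, $\phi(c)^p-c^{p^2}=(c^p+p\delta(c))^p-(c^p)^p$ is divisible by $p^2$, so $\delta(c^p)=\tfrac{\phi(c)^p-c^{p^2}}{p}\in pA$. Combining these with the addition formula $\delta(x+y)=\delta(x)+\delta(y)-\sum_{i=1}^{p-1}\tfrac1p\binom{p}{i}x^iy^{p-i}$ and the observation that $\sum_{i=1}^{p-1}\tfrac1p\binom{p}{i}x^iy^{p-i}$ is good whenever $y\in pA$ (each summand then lies in $pA$), one gets $\delta(pb+c^p)\sim 0$.

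For the main identity I would apply the addition formula to $\delta\bigl(a+(pb+c^p)\bigr)$, obtaining $\delta(a)+\delta(pb+c^p)-\sum_{i=1}^{p-1}\tfrac1p\binom{p}{i}a^i(pb+c^p)^{p-i}$. The middle term is good by the previous paragraph, and since $(pb+c^p)^{p-i}-c^{p(p-i)}\in pA$ while $\tfrac1p\binom{p}{i}\cdot p=\binom{p}{i}\equiv 0\pmod p$, replacing $(pb+c^p)^{p-i}$ by $c^{p(p-i)}$ alters the last sum only by a good element. This yields $\delta(a+pb+c^p)\sim\delta(a)-\sum_{i=1}^{p-1}\tfrac1p\binom{p}{i}a^ic^{p(p-i)}$, which is (2). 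The ``especially'' clause is the case $c=0$, $a_1=a_2+pb$ (here $0^{p(p-i)}=0$), and (1) follows by writing a good $a$ as $a=b^p+pc$ via Lemma~\ref{lem:good}(2) and applying (2) with the first argument equal to $0$, so that $\delta(a)\sim\delta(0)-\sum_{i=1}^{p-1}\tfrac1p\binom{p}{i}0^i b^{p(p-i)}=0$.

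I do not anticipate a serious obstacle: the content is really just the two congruences $\delta(pb)\sim 0$ and $\delta(c^p)\in pA$ together with bookkeeping of $p$-adic divisibilities. The only point that needs mild care is keeping straight which intermediate terms merely lie in $pA$ (hence are good) and which steps genuinely use that $G$ is a subring, so that the congruence $\sim$ may be added and subtracted freely.
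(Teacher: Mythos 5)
Your proof is correct, and it organizes the argument differently from the paper. The paper proves (1) first, by writing a good element as $\phi(b)+pc$ and expanding $\delta(\phi(b)+pc)$ directly via the sum and product rules (using $\delta\circ\phi=\phi\circ\delta$ and $\delta(p)=1-p^{p-1}$); it then proves (2) by the same addition-formula computation you perform, but disposes of the term $\delta(pb+c^p)$ by appealing to the already-established Item (1), since $pb+c^p$ is visibly good. You reverse the dependency: you show $\delta(pb+c^p)\sim 0$ from scratch via the two building blocks $\delta(pb)=\phi(b)-p^{p-1}b^p$ and $\delta(c^p)\in pA$, prove (2) self-contained, and then obtain (1) as the special case $a=0$, $a+pc+b^p=$ (the good element), where the correction sum vanishes because $0^i=0$ for $i\ge 1$. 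Both routes are sound and neither is circular; yours avoids the slightly messier direct expansion of $\delta(\phi(b)+pc)$ at the cost of the extra observation $\delta(c^p)\in pA$, and it makes explicit the useful structural fact that the set of good elements is a subring, which the paper uses only implicitly when adding and subtracting good terms. The divisibility bookkeeping in your replacement of $(pb+c^p)^{p-i}$ by $c^{p(p-i)}$ (the factor $p$ from the difference cancels the denominator and leaves $\binom{p}{i}\equiv 0\bmod p$) matches what the paper's first "$\sim$" step tacitly requires, so you have in fact justified a step the paper leaves to the reader.
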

  \begin{proof}
      For Item (1): As $a$ is good, we can write $a = \phi(b)+pc$ for some $b,c\in A$. Then we have
      \[\begin{split}
      \delta(a) &= \delta(\phi(b)+pc) \\
      &= \phi(\delta(b))+\delta(pc) - \sum_{i=1}^{p-1}\binom{p}{i}p^{i-1}c^i\phi(b)^{p-i}\\
      &= \phi(\delta(b))+ \delta(p)\phi(c) + p^p\delta(c) - \sum_{i=1}^{p-1}\binom{p}{i}p^{i-1}c^i\phi(b)^{p-i}\\
      &=\phi(\delta(b)+c)-p^{p-1}\phi(c)+ p^p\delta(c) - \sum_{i=1}^{p-1}\binom{p}{i}p^{i-1}c^i\phi(b)^{p-i}.
      \end{split}\]
      Thus, $\delta(a)$ is good.

      For Item (2):  We have
      \[\begin{split}
          \delta(a+pb+c^p) = & \delta(a) + \delta(pb+c^p)-\sum_{i=1}^{p-1}\frac{\binom{p}{i}}{p}a^i(pb+c^p)^{p-i}\\
          \sim & \delta(a) + \delta(pb+c^p)-\sum_{i=1}^{p-1}\frac{\binom{p}{i}}{p}a^ic^{p(p-i)}\\
          \sim & \delta(a)-\sum_{i=1}^{p-1}\frac{\binom{p}{i}}{p}a^ic^{p(p-i)},
      \end{split}\]
      where the last equivalence follows from Item (1).
  \end{proof}

  Now, we are able to prove the following result:
  
  \begin{prop}\label{prop:truncated split}
      Fix an $m\geq 1$.
      Let $\delta_1,\delta_2$ be two $\delta$-structures on $R$ so that $\delta_1\equiv \delta_2\mod p^m$. Then for any reduced $R_0$-algebra $S$, the $s_{\delta_1}$ coincides with $s_{\delta_2}$ as elements in 
      \[\Hom_{A\text{-algebra}}(R,\overline{\rW_{m+1}(S)}).\]
  \end{prop}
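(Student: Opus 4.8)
The plan is to reduce everything to a statement about iterated $\delta$'s: as noted right before the proposition, $s_{\delta_1}=s_{\delta_2}$ in $\Hom_{A\text{-algebra}}(R,\overline{\rW_{m+1}(S)})$ for all reduced $R_0$-algebras $S$ if and only if $\delta_1^n(x)\sim\delta_2^n(x)$ for all $x\in R$ and all $0\le n\le m+1$ (here $\sim$ is the ``difference is good'' relation of Definition \ref{dfn:good}, and for reduced $S$ the ideal defining $\overline{\rW_{m+1}(S)}$ is exactly the one coming from good elements in each Witt coordinate). So the whole problem becomes: if $\delta_1\equiv\delta_2\bmod p^m$, then $\delta_1^n(x)\sim\delta_2^n(x)$ for every $x\in R$ and every $n\le m+1$. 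First I would record the translation between the congruence hypothesis and the Frobenius endomorphisms: $\delta_1\equiv\delta_2\bmod p^m$ is equivalent to $\varphi_1\equiv\varphi_2\bmod p^{m+1}$ (from $p\delta=\varphi-(-)^p$ and $p$-torsion-freeness of $R$); this is the same bookkeeping already used in Assumption \ref{assumpsion:key} for $m=1$.

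\textbf{Key steps.} The heart is an induction on $n$ from $0$ to $m+1$, proving simultaneously the following ``ladder'' statement: for each $0\le n\le m$ one has $\delta_1^n(x)\equiv\delta_2^n(x)\bmod p^{m-n+1}$, and moreover (for $n$ up to $m+1$) $\delta_1^n(x)\sim\delta_2^n(x)$. The base case $n=0$ is trivial ($\delta_1^0=\delta_2^0=\id$), and $n=1$ is the hypothesis together with Lemma \ref{lem:congruence}(2). For the inductive step I would write $y_1=\delta_1^n(x)$, $y_2=\delta_2^n(x)$ with $y_1=y_2+p^{m-n+1}z$ for some $z\in R$, and then analyze $\delta_i^{n+1}(x)=\delta_i(y_i)$. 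Two mechanisms feed into this. First, applying $\delta_1$ to both $y_1$ and to $y_2$: using the additive/multiplicative expansion of $\delta$ on $y_2+p^{m-n+1}z$ together with Lemma \ref{lem:congruence}, one gets $\delta_1(y_1)\equiv\delta_1(y_2)\bmod p^{m-n}$ (the point being that $\delta(pb)$ drops the $p$-adic valuation by exactly one, and $\delta(a+pb)-\delta(a)$ lies in $pR$, which is good by Lemma \ref{lem:good}/\ref{lem:congruence}(1) whenever we only need $\sim$). Second, comparing $\delta_1$ and $\delta_2$ applied to the \emph{same} element $y_2$: from $\varphi_1\equiv\varphi_2\bmod p^{m+1}$ and $p\delta_i=\varphi_i-(-)^p$ one gets $\delta_1(y_2)\equiv\delta_2(y_2)\bmod p^{m}$, hence $\equiv\bmod p^{m-n}$ as well since $n\ge 0$. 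Combining the two gives $\delta_1^{n+1}(x)\equiv\delta_2^{n+1}(x)\bmod p^{m-n}$, which is the ladder statement for $n+1\le m$; and at the top, $n+1=m+1$, the same computation delivers a congruence mod $p^0=(1)$ refined to the statement that the difference is good, i.e. $\delta_1^{m+1}(x)\sim\delta_2^{m+1}(x)$, using Lemma \ref{lem:congruence}(1)--(2) to absorb the remaining $\delta(pb)$-type and $\delta(a+pb)-\delta(a)$-type terms into good elements rather than into $pR$.

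\textbf{Main obstacle.} The delicate point is the very last rung of the ladder, where the plain congruence modulo a power of $p$ has run out and one must instead argue that a specific element is \emph{good} (i.e.\ a $p$-th power plus $p$ times something, modulo $p$) — this is exactly where Lemma \ref{lem:congruence} is doing real work, because $\delta$ does not preserve the ideal $pR$ but it does preserve goodness. Concretely, when $n=m+1$ I cannot afford ``$\bmod p$'' slack; I have to write $\delta_1^m(x)-\delta_2^m(x)=pz$ (no higher power available) and show $\delta_1(\delta_1^m x)-\delta_1(\delta_2^m x)$ is good: expand $\delta_1(y_2+pz)=\delta_1(y_2)+\delta_1(pz)-\sum_{i=1}^{p-1}\tfrac{\binom{p}{i}}{p}y_2^{\,i}(pz)^{p-i}$, observe each term is either divisible by $p$ (the sum, and $\delta_1(p)\cdot z$-type pieces coming from $\delta_1(pz)=p^p\delta_1(z)+\delta_1(p)\varphi_1(z)$) or manifestly good, and then handle $\delta_1(y_2)-\delta_2(y_2)$ via $\varphi_1\equiv\varphi_2\bmod p^{m+1}$ (so their $\delta$'s differ by something divisible by $p^m$, in particular good for $m\ge 1$). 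I would organize this as a short lemma ``if $a_1\equiv a_2\bmod p^k$ with $k\ge1$ then $\delta(a_1)\equiv\delta(a_2)\bmod p^{k-1}$, and if in addition $k\ge 2$ or $a_1-a_2$ is good then $\delta(a_1)\sim\delta(a_2)$'', iterate it $m$ times down the ladder, and conclude. Everything else is the routine binomial/Witt-vector arithmetic already implicit in Lemmas \ref{lem:good} and \ref{lem:congruence}, so I would not spell it out in full.
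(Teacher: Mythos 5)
Your proposal is correct and follows essentially the same route as the paper: the reduction to showing $\delta_1^n(x)\sim\delta_2^n(x)$ for all $0\le n\le m+1$, followed by a descending induction on $p$-adic precision with the last rung handled by goodness (a $p$-th power plus a multiple of $p$) rather than divisibility, is exactly the content of the paper's Lemma \ref{lem:high congruence}(1) combined with Lemmas \ref{lem:good} and \ref{lem:congruence}. The only difference is that the paper's lemma tracks the exact leading term $p^{m-l}x^{p^l}$ of the difference (which is needed later for the optimality example in Remark \ref{rmk:counterexample}), whereas your coarser divisibility ladder suffices for the proposition itself.
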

  \begin{proof}
      For any $\delta$-structure $\delta$ on $R$, denote by $\iota_{\delta}$ the associated non-canonical isomorphism between the following two functors
      \[S\mapsto \Hom_{A\text{-algebra}}(R,S)\]
      and 
      \[S\mapsto \Hom_{A\text{-}\delta\text{-algebra}}(R,\rW(S))\]
      from the category of $A$-algebras to the category of sets. In particular, we get $\iota_{\delta_1}$ and $\iota_{\delta_2}$ for the given $\delta$-structures $\delta_1$ and $\delta_2$ on $R$, respectively. We also denote by 
      \[\iota_i:R\to \rW(R)\]
      the morphism induced by $\id_R$ via the isomorphism
      \[\iota_{\delta_i}(R):\Hom_{A\text{-algebra}}(R,R) \xrightarrow{\cong} \Hom_{A\text{-}\delta\text{-algebra}}(R,\rW(R)).\]

      Let $\pr:R\to R_0$ be the natural projection. Then for any morphism of $A_0$-algebras $f:R_0\to S$, the morphism
      \[\iota_{\delta_i}(f)\in\Hom_{A\text{-algebra}}(R,\rW(S))\]
      factors as
      \[R\xrightarrow{\iota_i}\rW(R)\xrightarrow{\rW(\pr)}\rW(R_0)\xrightarrow{\rW(f)}\rW(S).\]
      Now, suppose that $S$ is reduced. Consider the following commutative diagram
      \[\begin{tikzcd}
         R \arrow[r, "\iota_i"] & \rW(R) \arrow[r,"\rW(\pr)"] \arrow[d,"{\rm Res}_{m+1}"]   & \rW(R_0) \arrow[r, "\rW(f)"] \arrow[d,"{\rm Res}_{m+1}"] & \rW(S) \arrow[d,"{\rm Res}_{m+1}"]\\
          & \rW_{m+1}(R) \arrow[r, "\rW_{m+1}(\pr)"] & \rW_{m+1}(R_0) \arrow[r, "\rW_{m+1}(f)"] \arrow[d,"\pi_p"]  & \rW_{m+1}(S) \arrow[d,"\pi_p"]  \\
          & & \overline{\rW_{m+1}(R_0)} \arrow[r, "\overline{\rW_{m+1}(f)}"] & \overline{\rW_{m+1}(S)},
\end{tikzcd}\]
 where ${\rm Res}_{m+1}:\rW(-)\to \rW_{m+1}(-)$ is the restriction of Witt functors and $\pi_p:\rW_{m+1}(-)\to \overline{\rW_{m+1}(-)}$ is the reduction modulo $p$. Then we have 
 \[s_{\delta_i} = \pi_p\circ{\rm Res}_{m+1}\circ\rW(f)\circ\rW(\pr)\circ\iota_i = \overline{\rW_{m+1}(f)}\circ\pi_p\circ{\rm Res}_{m+1}\circ\rW(\pr)\circ\iota_i.\]
 Thus, in order to show $s_{\delta_1} = s_{\delta_2}$, it suffices to show that
 \[\pi_p\circ\rW_{m+1}(\pr)\circ{\rm Res}_{m+1}\circ\iota_1 = \pi_p\circ\rW_{m+1}(\pr)\circ{\rm Res}_{m+1}\circ\iota_2.\] 
 Note that for any $i\in\{1,2\}$ and any $x\in R$,
      \[{\rm Res}_{m+1}(\iota_i(x)) = (x,\delta_i(x),\delta_i^2(x),\dots,\delta_i^{m+1}(x))\in \rW_{m+1}(R).\]
 By Lemma \ref{lem:congrunce of Joyal} below, we see that for any $0\leq l\leq m+1$ and any $x\in R$, we have $\delta_1^l(x)\sim \delta_2^l(x)$, yielding that
 \[(\pi_p\circ\rW_{m+1}(\pr)\circ{\rm Res}_{m+1}\circ\iota_1)(x) = (\pi_p\circ\rW_{m+1}(\pr)\circ{\rm Res}_{m+1}\circ\iota_2)(x)\]
      as desired. This completes the proof.
  \end{proof}

  \begin{lem}\label{lem:high congruence}
      Fix an $m\geq 1$.
      Let $\delta_1$ and $\delta_2$ be two $\delta$-structures on $R$ satisfying $\delta_1\equiv\delta_2\mod p^m$. For any $n\geq 1$, let $\delta_i^n$ be the self-composite of $n$ copies of $\delta_i$. Then for any $r\in R$, if $\delta_1(r) =  \delta_2(r)+p^mx$ for some $x\in R$, then we have that for any $0\leq l\leq m$,
          \[\delta^{1+l}_1(r)\equiv \delta_2^{1+l}(r)+p^{m-l}x^{p^l}\mod p^{m+1-l}.\]
  \end{lem}
  \begin{proof}
      We do induction on $l$. Assume we have shown the result for some $0\leq l<m$. Write
      \[\delta_1^{l}(r) = \delta_2^l(r)+p^{m+1-l}x^{p^{l-1}}+p^{m+2-l}y\]
      for some $y\in R$. Then we have
      \[\begin{split}
          \delta_1^{1+l}(r)-\delta_2^{1+l}(r) = &\delta_1(\delta_2^l(r)+p^{m+1-l}x^{p^{l-1}}+p^{m+2-l}y) -\delta_2^{l+1}(r)\\
           \equiv &\delta_2(\delta_2^l(r)+p^{m+1-l}x^{p^{l-1}}+p^{m+2-l}y)-\delta_2^{l+1}(r)\mod p^m \qquad \because \delta_1\equiv \delta_2\mod p^{m+1-l}\\
           =&\delta_2(p^{m+1-l}(x^{p^{l-1}}+py))-\sum_{i=1}^{p-1}\frac{\binom{p}{i}}{p}\delta_2^l(r)^{i}(p^{m+1-l}(x^{p^{l-1}}+py))^{p-i}\\
          \equiv &\frac{p^{m+1-l}-p^{p(m+1-l)}}{p}(x^{p^{l-1}}+py)^p\mod p^{m+1-l}\\
          \equiv & p^{m-l}x^{p^l}\mod p^{m+1-l}.
      \end{split}\]
      So we can conclude by induction.
  \end{proof}
  
    \begin{lem}\label{lem:congrunce of Joyal}
      Fix an $m\geq 1$.
      Let $\delta_1$ and $\delta_2$ be two $\delta$-structures on $R$ satisfying $\delta_1\equiv\delta_2\mod p^m$. For any $r\in R$, if $\delta_1(r) =  \delta_2(r)+p^mx$ for some $x\in R$, then we have that for any $0\leq l\leq m-1$,
          \[\delta_{1,1+l}(r)\equiv \delta_{2,1+l}(r)\mod p^{m-l},\]
          and 
          \[\delta_{1,m}(r)\equiv \delta_{2,m}(r)+x^{p^m}\mod p.\]
  \end{lem}
  \begin{proof}
      Recall that for any $\delta$-structure on $R$, if we denote by $\delta_n$ the $n$-th Joyal's coordinate of Witt vectors, then by \cite[Rem. 2.14]{BS22}, there exists a monic polynomial $f_n(X) = X^n+c_{n-1}X^{n-1}+\cdots+c_0\in\bZ[X]$ such that
      \begin{equation*}\label{equ:Joyal vs delta}
          \delta_n = f_n(\delta) =\delta^n+c_{n-1}\delta^{n-1}+\cdots+c_0.
      \end{equation*}
      
      Now, for any $0\leq l\leq m$, we have
      \[\delta_{i,1+l}(r) = \delta_{i}^{l+1}(r)+\sum_{j=0}^{l}c_j\delta_i^j(r).\]
      Then the result follows from Lemma \ref{lem:high congruence} immediately.
  \end{proof}
  
  \begin{cor}\label{cor:truncated split}
      Assume $A_0$ is reduced. 
      Let $\delta_1,\delta_2$ be two $\delta$-structures on $R$ so that $\delta_1\equiv \delta_2\mod p^m$ for some $m\geq 1$. Then for any $R_0$-algebra $S$, the section $s_{\delta_1}$ coincides with $s_{\delta_2}$ as elements in 
      \[\Hom_{A\text{-algebra}}(R,\overline{\rW_{m+1}(S)}).\]
  \end{cor}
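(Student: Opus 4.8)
The plan is to deduce the corollary from Proposition \ref{prop:truncated split} by reducing, through functoriality in $S$, to the universal case $S=R_0$, at which point the reducedness hypothesis of that proposition is automatic.

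First I would make the reduction explicit. Recall from \S\ref{sec:remarks on gerbes} that for an $R_0$-algebra $S$ with structure map $g_0\colon R_0\to S$, the element $s_\delta\in\Hom_{A\text{-algebra}}(R,\overline{\rW_{m+1}(S)})$ is the composite of $\iota_\delta(g)=\rW(g)\circ\iota_R^\delta\colon R\to\rW(S)$ with the projection $\rW(S)\twoheadrightarrow\overline{\rW_{m+1}(S)}$, where $\iota_R^\delta\colon R\to\rW(R)$ is $x\mapsto(x,\delta(x),\delta^2(x),\dots)$ and $g\colon R\to S$ is the $A$-algebra map induced by $g_0$ (it factors as $R\xrightarrow{\pr}R_0\xrightarrow{g_0}S$). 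Since $\rW(g)=\rW(g_0)\circ\rW(\pr)$, the element $s_\delta$ at $S$ is the image of the element $s_\delta$ at $(R_0,\id_{R_0})$ under $\overline{\rW_{m+1}(g_0)}\colon\overline{\rW_{m+1}(R_0)}\to\overline{\rW_{m+1}(S)}$. Hence it suffices to prove the single equality $s_{\delta_1}=s_{\delta_2}$ in $\Hom_{A\text{-algebra}}(R,\overline{\rW_{m+1}(R_0)})$.

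Next I would note that $R_0$ is reduced: we are in the affine setting with a chart $\Box\colon A_0[\underline T^{\pm 1}]\to R_0$ \'etale, and $A_0$ is reduced, so $A_0[\underline T^{\pm 1}]$ is reduced and hence so is its \'etale algebra $R_0$. Consequently Proposition \ref{prop:truncated split} applies verbatim with $S=R_0$ and delivers exactly $s_{\delta_1}=s_{\delta_2}$ in $\Hom_{A\text{-algebra}}(R,\overline{\rW_{m+1}(R_0)})$; combined with the previous paragraph this proves the corollary for all $R_0$-algebras $S$.

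I do not anticipate any real difficulty: all the arithmetic is already contained in Proposition \ref{prop:truncated split} (through Lemma \ref{lem:high congruence} and Lemma \ref{lem:congruence}), and the only new ingredient is the trivial observation that the universal $R_0$-algebra, namely $R_0$ itself, is reduced. The lone point that needs a careful line is the functoriality bookkeeping in the first paragraph, i.e.\ checking directly from the definition of $s_\delta$ that $S\mapsto\big(R\to\rW(R)\to\rW(R_0)\to\rW(S)\to\overline{\rW_{m+1}(S)}\big)$ is functorial in the $R_0$-algebra $S$ — which is immediate once one writes out the construction, $\frakX_0=\Spf(R_0)$ being affine.
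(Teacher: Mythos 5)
Your proposal is correct and is essentially identical to the paper's own proof: both factor $\iota_{\delta_i}(f)$ as $R\to\rW(R)\to\rW(R_0)\to\rW(S)$, observe that $R_0$ is reduced (the paper says because it is smooth over the reduced $A_0$; your chart argument gives the same), apply Proposition \ref{prop:truncated split} in the universal case $S=R_0$, and push forward along $\rW(f)$. No gaps.
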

  \begin{proof}
      As in the proof of Proposition \ref{prop:truncated split}, we are reduced to the case for $S = R_0$. As $A_0$ is reduced, so is $R_0$, because it is a smooth algebra over $A_0$. Thus, we can conclude by Proposition \ref{prop:truncated split}.
  \end{proof}

\section*{Data Availability Statement}
 The paper contains no extra data from other places.


\begin{thebibliography}{99}

 \bibitem[BL22]{BL22b} Bhargav Bhatt and Jacob Lurie. {\it The prismatisation of $p$-adic formal schemes}. arXiv:2201.06124, 2022.


 \bibitem[BO78]{Ber-Ogus} Pierre Berthelot and Arthur Ogus. {\it Notes on crystalline cohomology}. Princeton University Press, Princeton, NJ; University of Tokyo Press, Tokyo, 1978.
 
\bibitem[BS22]{BS22} Bhargav Bhatt and Peter Scholze. {\it Prisms and prismatic cohomology}. Ann. of Math. (2),
196(3):1135–1275, 2022.

\bibitem[BS23]{BS23} Bhargav Bhatt and Peter Scholze. {\it Prismatic F-crystals and crystalline Galois representations}. Camb. J. Math., 11(2):507–562, 2023. 36

\bibitem[DI87]{DI} Pierre Deligne and Luc Illusie. {\it Rel\'evements modulo p et d\'ecomposition du complexe de
de Rham}. Inventiones mathematicae, 89:247–270, 1987.

\bibitem[GLSQ10]{GLSQ10} Michel Gros, Bernard Le Stum, and Adolfo Quir\'os. {\it A Simpson correspondence in positive
characteristic}. Publ. Res. Inst. Math. Sci., 46(1):1–35, 2010.

\bibitem[LSZ19]{LSZ19} Guitang Lan, Mao Sheng, and Kang Zuo. {\it Semistable Higgs bundles, periodic Higgs
bundles and representations of algebraic fundamental groups}. J. Eur. Math. Soc. (JEMS),21(10):3053–3112, 2019.

\bibitem[Ogu22]{Ogu22} Arthur Ogus. {\it Crystalline prisms: Reflections on the present and the past}.
arxiv:2204.06621v3, 2022.

\bibitem[OV07]{OV} Arthur Ogus and Vadim Vologodsky. {\it Nonabelian Hodge theory in characteristic p}. Publications
math\'ematiques IHES, 106(1):1–138, 2007.

\bibitem[Pet23]{Pet23} Alexander Petrov. {\it Non-decomposability of the de rham complex and non-semisimplicity of the sen operator}. arXiv:2302.11389, 2023.

\bibitem[Shi15]{Shiho} Atsushi Shiho. {\it Notes on generalizations of local Ogus-Vologodsky correspondence}. J. Math. Sci. Univ. Tokyo, 22(3):793–875, 2015.

\bibitem[Tia23]{Tia23} Yichao Tian. {\it Finiteness and duality for the cohomology of prismatic crystals}. J. Reine
Angew. Math., 800:217–257, 2023.

\bibitem[Xu19]{Xu19} Daxin Xu. {\it Lifting the Cartier transform of Ogus-Vologodsky modulo $p^n$}. M´em. Soc.
Math. Fr. (N.S.), (163):133, 2019.

\bibitem[Yu24a]{Yu} Jiahong Yu. {\it Prismatic crystals for schemes in characteristic $p$}. arXiv:2407.15381, 2024.


\bibitem[Yu24b]{Yu2} Jiahong Yu. {\it Notes on Hodge--Tate stacks}. arXiv:2410.06630, 2024.

\end{thebibliography}
\end{document}